
\documentclass[12pt,a4paper,twoside]{article}
\usepackage[margin=25.7mm]{geometry}
\usepackage{titling}
\usepackage{titletoc}
\usepackage{url}

\usepackage[utf8]{inputenc}
\usepackage[T1]{fontenc}
\usepackage{amsmath}
\usepackage{amsfonts}
\usepackage{amssymb}
\usepackage{amsthm}

\usepackage{mathrsfs}
\usepackage{mathtools}
\usepackage{stmaryrd}
\usepackage[all]{xy}
\usepackage{makeidx}

\usepackage{natbib}
\bibpunct{(}{)}{,}{a}{}{,}
\usepackage{tocbibind}

\usepackage[french,english.british]{babel}\frenchsetup{GlobalLayoutFrench=false}
\usepackage{xspace}
\RequirePackage{etoolbox}

\makeatletter
\def\th@plain{\slshape}
\patchcmd{\th@remark}{\itshape}{\slshape}{}{}
\DeclareFontEncoding{LS1}{}{}
\DeclareFontEncoding{LS2}{}{\noaccents@}
\DeclareFontSubstitution{LS1}{stix}{m}{n}
\DeclareFontSubstitution{LS2}{stix}{m}{n}
\DeclareSymbolFont{stix-letters}       {LS1}{stix}     {m}{it}
\DeclareSymbolFont{stix-arrows1}       {LS1}{stixsf}   {m} {n}
\DeclareSymbolFont{stix-operators}     {LS1}{stix}     {m} {n}
\DeclareSymbolFont{stix-largesymbols}  {LS2}{stixex}   {m} {n}
\SetSymbolFont{stix-letters}     {bold}{LS1}{stix}     {b}{it}
\SetSymbolFont{stix-arrows1}     {bold}{LS1}{stixsf}   {b} {n}
\SetSymbolFont{stix-operators}   {bold}{LS1}{stix}     {b} {n}
\SetSymbolFont{stix-largesymbols}{bold}{LS2}{stixex}   {b} {n}

\def\stix@undefine#1{%
    \if\relax\noexpand#1\let#1=\@undefined\fi}
\def\stix@MathSymbol#1#2#3#4{%
    \stix@undefine#1%
    \DeclareMathSymbol{#1}{#2}{#3}{#4}}
\def\stix@MathDelimiter#1#2#3#4#5#6{%
    \stix@undefine#1%
    \DeclareMathDelimiter{#1}{#2}{#3}{#4}{#5}{#6}}

\stix@MathSymbol{\stixbot}            {\mathord}  {stix-letters}     {"F2}
\stix@MathSymbol{\stixexists}         {\mathord}  {stix-operators}   {"C7}
\stix@MathSymbol{\stixtop}            {\mathord}  {stix-letters}     {"F1}
\stix@MathSymbol{\stixvee}            {\mathbin}  {stix-operators}   {"E2}
\stix@MathSymbol{\stixwedge}          {\mathbin}  {stix-operators}   {"E1}
\stix@MathSymbol{\stixin}             {\mathrel}  {stix-operators}   {"CB}
\stix@MathSymbol{\stixnotin}          {\mathrel}  {stix-operators}   {"CC}
\stix@MathSymbol{\stixrightarrow}     {\mathrel}  {stix-arrows1}     {"99}
\stix@MathSymbol{\stixrightleftarrows}{\mathrel}  {stix-arrows1}     {"CB}
\stix@MathSymbol{\stixsubseteq}       {\mathrel}  {stix-letters}     {"D3}
\stix@MathSymbol{\stixvdash}          {\mathrel}  {stix-letters}     {"EF}
\stix@MathSymbol{\stixbigveeop}       {\mathop}   {stix-largesymbols}{"B5}
\stix@MathSymbol{\stixbigwedgeop}     {\mathop}   {stix-largesymbols}{"B4}
\stix@MathSymbol{\stixdotminus}       {\mathbin}  {stix-operators}   {"E9}
\stix@MathDelimiter{\stixlBrack}      {\mathopen} {stix-largesymbols}{"E0}{stix-largesymbols}{"06}
\stix@MathDelimiter{\stixrBrack}      {\mathclose}{stix-largesymbols}{"E1}{stix-largesymbols}{"07}

\newcommand\stixbigwedge{\DOTSI\stixbigwedgeop\slimits@}
\newcommand\stixbigvee{\DOTSI\stixbigveeop\slimits@}

\AtBeginDocument{%
    \@ifpackageloaded{amsmath}{%
    }{%
        \let\ilimits@=\nolimits
        \let\slimits@=\relax
        \let\DOTSI\relax
    }
}

\RequirePackage{amsbsy}

\newcommand\Exists{\boldsymbol{\stixexists}}

\newcommand\VDash{\boldsymbol{\stixvdash}}
\newcommand\Land{\boldsymbol{\stixwedge}}

\newcommand\Bot{\boldsymbol{\stixbot}}

\newcommand \vdg{\VDash}
\newcommand \Vd {\,\vdg}
\newcommand \vd {\,\,\vdg}
\newcommand \vii{\Land}

\newcommand \vdi[1] {\mathrel{\vd_{#1}}}
\newcommand \Vdi[1] {\mathrel{\Vd_{#1}}}

\makeatother

\usepackage{url}
\usepackage[bookmarksopen=false,breaklinks=true,%
      backref=page,pagebackref=true,plainpages=false,%
      hyperindex=true,pdfstartview=FitH,colorlinks=true,%
      pdfpagelabels=true,linkcolor=blue,%
      citecolor=red,urlcolor=red,
      ]%
   {hyperref}

   \usepackage{doi}

\newcounter{bidon}
\newcommand{\rdb}{\refstepcounter{bidon}}
\setcounter{secnumdepth}{1}

\DeclareMathAlphabet{\mathpzc}{OT1}{pzc}{m}{it}

\newcommand \sibil[1] {#1}
\newcommand \sinotbil[1] {}


\newcommand{\di}{\,{\vert}\,}

\newcommand {\junk}[1]{}

\newcommand\ndsp{\textstyle}

\newcommand \noi {\noindent}
\renewcommand \ss {\smallskip}
\newcommand \sni {\ss\noi}

\newcommand \ms {\medskip}
\newcommand \mni {\ms\noi}

\newcommand \alb {\allowbreak}


\renewcommand \leq{\leqslant}

\renewcommand \geq{\geqslant}

\newcommand\eti{^\times}
\newcommand \epr{^\perp}

\newcommand \etoz{$^*$}

\newcommand \Rti {\gR^{\!\times}}

\newcommand\equidef{\buildrel{{\rm def}}\over{\;\Longleftrightarrow\;}}
\newcommand\eqdef{\buildrel{\rm def}\over {\;=\;}}
\newcommand\eqdefi{\buildrel{\rm def}\over {\;=\;}}


\newcommand{\pref}[1]{\textup{\hbox{\normalfont(\ref{#1})}}}

\newcommand \aqo[2] {#1\sur{\gen{#2}}\!}

\newcommand \ci[1] {{{#1}^\circ}}

\newcommand \gen[1] {\left\langle{#1}\right\rangle}

\newcommand \so[1] {\left\{ #1   \right\}}

\newcommand \sotq[2]{\so{\,#1\mathrel{;}#2\,}}

\newcommand \sur[1] {\!\left/#1\right.}



\newcommand{\mt}{\mapsto}

\renewcommand \leq{\leqslant}
\renewcommand \geq{\geqslant}

\newcommand \Prod {\prod\nolimits}
\newcommand \Ex {{\exists}}



\newcommand \vda {\,\vdash\,}

\newcommand \dar {\mathord{\downarrow}}

\newcommand\tsbf[1]{\textsf{\textbf{\textup{#1}}}}
\newcommand\lab[1]{\item[\tsbf{#1}]}
\newcommand\Lab[1]{\rdb\item[\tsbf{#1}]\label{Ax#1}}
\newcommand\fLab[1]{\rdb\item[\tsbf{#1}]\label{fAx#1}}
\newcommand\Tsbf[1]{\hyperref[Ax#1]{\tsbf{#1}}}


\newcommand\Sa[1]{\hyperref[theorie#1]{\sa{#1}}}
\newcommand\sa[1]{\hbox{\usefont{T1}{pzc}{m}{it}#1}\,}

\newcommand\sA[1]{\hbox{\small\usefont{T1}{pzc}{m}{it}#1}\,}

\newcommand \snic[1] {\sni\centerline{$#1$}

\ss}

\newcommand \eoe {\hbox{}\nobreak\hfill
\vrule width .5em height .5em depth 0mm \par \smallskip}

\newcommand \ov[1] {\overline{#1}}

\newcommand \wi[1] {\widetilde{#1} }

\renewcommand\matrix[1]{
\begin{array}{cccccccccc} 
#1
\end{array}}

\newcommand\cmatrix[1]{\left[ \matrix{#1} \right]}

\newcommand \DeuxRegles[2]{%
\vspace{-1em}\DeuxCols
{\begin{enumerate}  #1
\end{enumerate}
}
{\begin{enumerate}  #2
\end{enumerate}
}
\vspace{-.3em}
}
\usepackage{multicol}
\renewcommand\DeuxRegles[2]{%
  \begin{multicols}{2}
    \begin{itemize}
      #1
      #2
    \end{itemize}
  \end{multicols}
}

\newcommand \Regles[1]{%
\vspace{-1em}\UneCol{
\begin{enumerate}
{#1}
\end{enumerate}
}
\vspace{-.3em}
}
\renewcommand\Regles[1]{%
  \begin{itemize}
    #1
  \end{itemize}
}

\newcommand \labu {\lab{$\bullet$}}

\makeatletter
\def\revddots{\mathinner{\mkern1mu\raise\p@
\vbox{\kern7\p@\hbox{.}}\mkern2mu
\raise4\p@\hbox{.}\mkern2mu\raise7\p@\hbox{.}\mkern1mu}}
\makeatother

\newcommand \BB{\mathbb {B}}

\newcommand \NN{\mathbb {N}}
\newcommand \ZZ{\mathbb {Z}}

\newcommand \QQ{\mathbb {Q}}

\newcommand \gk {\mathbf{k}}

\newcommand \gA {\mathbf{A}}
\newcommand \gB {\mathbf{B}}
\newcommand \gC {\mathbf{C}}
\newcommand \gD {\mathbf{D}}

\newcommand \gE {\mathbf{E}}

\newcommand \gK {\mathbf{K}}

\newcommand \gR {\mathbf{R}}
\newcommand \R {\gR}

\newcommand \gT {\mathbf{T}}
\newcommand \gV {\mathbf{V}}

\newcommand \gZ {\mathbf{Z}}

\newcommand \Red {_{\mathrm{red}}}
\newcommand \qim {_{\mathrm{min}}}
\newcommand \Rred {\gR\Red}
\newcommand \Rqim {\gR\qim}
\newcommand \Rmin {\Rqim}

\newdimen\xyrowsp
\xyrowsp=3pt
\newcommand{\SCO}[6]{
\xymatrix @R = \xyrowsp {
                                  &1 \ar@{-}[dl] \ar@{-}[dr] \\
#3 \ar@{-}[ddr]                   &   & #6 \ar@{-}[ddl] \\
                                  &\bullet\ar@{-}[d] \\
                                  &\bullet   \\
#2 \ar@{-}[ddr] \ar@{-}[uur]      &   & #5 \ar@{-}[ddl] \ar@{-}[uul] \\
                                  &\bullet \ar@{-}[d] \\
                                  &\bullet  \\
#1 \ar@{-}[uur]                   &   & #4 \ar@{-}[uul] \\
                                  & 0 \ar@{-}[ul] \ar@{-}[ur] \\
}
}

\newcommand{\SCOR}[8]{
\xymatrix @R =.4em @C =3em {
                                  &\bullet \ar@{-}[ddl] \ar@{-}[ddr] \\
                                  &#7\ar@{-}[u] \\
#3 \ar@{-}[ddr]                   &   & #6 \ar@{-}[ddl] \\
                                  &\bullet\ar@{-}[d] \\
                                  &\bullet   \\
#2 \ar@{-}[ddr] \ar@{-}[uur]      &   & #5 \ar@{-}[ddl] \ar@{-}[uul] \\
                                  &\bullet \ar@{-}[d] \\
                                  &\bullet  \\
#1 \ar@{-}[uur]                   &   & #4 \ar@{-}[uul] \\
                                  &#8\ar@{-}[d] \\
                                  &\bullet \ar@{-}[uul] \ar@{-}[uur] \\
}
}

\DeclareMathOperator\Ann{Ann}
\DeclareMathOperator\Frac{Frac}
\DeclareMathOperator\Mat{Mat}
\DeclareMathOperator\Vr{Vr}
\DeclareMathOperator*\Vp{V^\prime}
\DeclareMathOperator\Nrn{Nrn}
\DeclareMathOperator\Val{\mathsf{Val}}
\DeclareMathOperator\val{\mathsf{val}}
\DeclareMathOperator\Kdim{\mathsf{Kdim}}

\DeclareMathOperator\Valp{\mathsf{Val^\prime}}
\DeclareMathOperator\Zar{\mathsf{Zar}}
\DeclareMathOperator\Vdim{\mathsf{Vdim}}
\DeclareMathOperator\vdim{\mathsf{vdim}}
\DeclareMathOperator\dimv{\mathsf{dimv}}

\newcommand\MA[1]{\mathop{#1}\nolimits}

\newcommand{\ZarR}{\Zar(\gR)}

\newcommand \cA {{\cal A}}

\newcommand \cL {{\cal L}}

\newcommand \cP {{\cal P}}


\newcommand \rD {\mathrm{D}}

\newcommand \DR {\rD_{\gR}}

\newcommand\fa{\mathfrak{a}}
\newcommand\fb{\mathfrak{b}}

\newcommand\fA{\mathfrak{A}}

\newcommand\fp{\mathfrak{p}}

\newcommand\fv{\mathfrak{v}}




\newcommand \vu {\vee} 
\newcommand \vi {\wedge} 
\newcommand \Vu {\bigvee}
\newcommand \Vi {\bigwedge}


\newcommand \Un {\mathbf{1}}
\newcommand \Deux {\mathbf{2}}

\newcommand \ua  {{\underline{a}}}

\newcommand \ux {{\underline{x}}}

\newcommand \uX {\underline{X}}
\newcommand \uy{{\underline{y}}}
\newcommand \uY  {{\underline{Y}}}

\newcommand \an {a_1,\ldots,a_n}

\newcommand \xk {x_1,\ldots,x_k}
\newcommand \Xk {X_1,\ldots,X_k}
\newcommand \xm {x_1,\ldots,x_m}

\newcommand \xn {x_1,\ldots,x_n}

\newcommand \Xn {X_1,\ldots,X_n}

\newcommand \Ym {Y_1,\ldots,Y_m}

\newcommand \RX {\gR[X]}

\newcommand \RXn {{\gR[\Xn]}}
\newcommand \Rxn {{\gR[\xn]}}


\newcommand{\SCo}[8]{
\xymatrix @R = #1 @C = #2{
                                  &1 \ar@{-}[dl] \ar@{-}[dr] \\
#5 \ar@{-}[ddr]                   &   & #8 \ar@{-}[ddl] \\
                                  &\bullet\ar@{-}[d] \\
                                  &\bullet   \\
#4 \ar@{-}[ddr] \ar@{-}[uur]      &   & #7 \ar@{-}[ddl] \ar@{-}[uul] \\
                                  &\bullet \ar@{-}[d] \\
                                  &\bullet  \\
#3 \ar@{-}[uur]                   &   & #6 \ar@{-}[uul] \\
                                  & 0 \ar@{-}[ul] \ar@{-}[ur] \\
}
}




\newcommand \Pfe {{\rm P}_{{\rm fe}}}




%

\thickmuskip = 7mu plus 2mu

\begin{document}
\selectlanguage{english}

\thispagestyle{empty}
~ 
\vspace{3cm}

\noindent In this file you find the English version starting on the page  numbered \pageref{beginenglish}.

\bigskip  \noindent  {\Large \bf Valuative dimension, constructive points of view}

\medskip 
This is quasi the same text as in 
Lombardi H., Neuwirth S. and Yengui I.
Valuative dimension, constructive points of view.
{\it Journal of Algebra}, {\bf 647}, 206--229, 2024.

We have added the examples \fbox{$\vdim\leq 3\Rightarrow\Vdim\leq 3$}  and 
\fbox{$\vdim\leq 4\Rightarrow\Vdim\leq 4$} in the paragraph ``{\bf Proof of the converse inequality}''.

We have fixed a typo in Definition \ref{defivalkK}.

\bigskip \noindent  
Then the French version begins on the page numbered \pageref{beginfrench}.

\medskip\noindent   {\Large \bf Dimension valuative, points de vue constructifs}

\smallskip \noindent \foreignlanguage{french}{Le lecteur ou la lectrice sera sans doute surprise de l'alternance des sexes ainsi que de l'orthographe du mot `corolaire', avec d'autres innovations auxquelles elle n'est pas habituée. En fait, nous avons essayé de suivre au plus près les préconisations de l'orthographe nouvelle recommandée, telle qu'elle est enseignée aujourd'hui dans les écoles en France.}  

\bigskip\noindent   {\large \bf Authors}  

\smallskip \noindent Henri Lombardi, Université de Franche-Comté, CNRS, UMR 6623, LmB, 25000 Besançon, France, \url{henri.lombardi@univ-fcomte.fr}\\
email: {\tt henri.lombardi@univ-fcomte.fr}

\smallskip \noindent Stefan Neuwirth, Université de Franche-Comté, CNRS, UMR 6623, LmB, 25000 Besançon, France, \url{stefan.neuwirth@univ-fcomte.fr}\\
email: {\tt stefan.neuwirth@univ-fcomte.fr}

\smallskip \noindent  Ihsen Yengui,
Département de mathématiques,  Faculté des sciences de Sfax, Université de Sfax, 3000 Sfax, Tunisia.\\ 
email: {\tt ihsen.yengui@fss.rnu.tn}

%

\normalsize
\newpage
\thispagestyle{empty}

~

\pagestyle{headings}
\patchcmd{\sectionmark}{\MakeUppercase}{}{}{}
\setcounter{page}{0}
\renewcommand\thepage{E\arabic{page}}


\begingroup

\def\proofname{\textsl{Proof}}

\startcontents[english]



\theoremstyle{plain}
\newtheorem{theorem}{Theorem}[section]
\newtheorem{thdef}[theorem]{Theorem and definition}
\newtheorem{lemma}[theorem]{Lemma}
\newtheorem{corollary}[theorem]{Corollary}
\newtheorem{proposition}[theorem]{Proposition}
\newtheorem{propdef}[theorem]{Proposition and definition}
\newtheorem{fact}[theorem]{Fact}
\newtheorem{convention}[theorem]{Convention}

\theoremstyle{definition}
\newtheorem{note}[theorem]{Note}
\newtheorem{context}[theorem]{Context}
\newtheorem{conjecture}[theorem]{Conjecture}
\newtheorem{definition}[theorem]{Definition}
\newtheorem{definitions}[theorem]{Definitions}
\newtheorem{descri}[theorem]{Description}
\newtheorem{notation}[theorem]{Notation}
\newtheorem{definota}[theorem]{Definition and notation} 
\newtheorem{problem}[theorem]{Problem}
\newtheorem{question}[theorem]{Question}
\newtheorem{ter}[theorem]{Terminology}

\theoremstyle{remark}
\newtheorem{notes}[theorem]{Notes}
\newtheorem{remark}[theorem]{Remark}
\newtheorem{remarks}[theorem]{Remarks}
\newtheorem{comment}[theorem]{Comment}
\newtheorem{comments}[theorem]{Comments}
\newtheorem{example}[theorem]{Example}
\newtheorem{examples}[theorem]{Examples}

\normalsize

\newpage
\title{Valuative dimension, constructive points of view}
\sibil{\author{Henri Lombardi, Stefan Neuwirth, Ihsen Yengui}}
\sinotbil{
\author{%
Henri Lombardi
\thanks{Université de Franche-Comté, Laboratoire de mathématiques de Besançon, UMR, CNRS 6623,  25000
Besançon, France. {\tt henri.lombardi@univ-fcomte.fr}}
\and
Stefan Neuwirth
\thanks{Université de Franche-Comté, Laboratoire de mathématiques de Besançon, UMR CNRS 6623, 16~route de Gray, 25000
Besançon, France. {\tt stefan.neuwirth@univ-fcomte.fr}}
 \and  Ihsen Yengui
\thanks{Département de mathématiques,  Faculté des sciences de Sfax, Université de Sfax, 3000 Sfax, Tunisia. {\tt ihsen.yengui@fss.rnu.tn}}}
}

\maketitle

\rdb
\label{beginenglish}
\newcommand \gui[1] {``#1''}

\begin{abstract}There are several classical characterisations of the valuative dimension of a commutative ring. Constructive versions of this dimension have been given and proven to be equivalent to the classical notion within classical mathematics, and they can be used for the usual examples of commutative rings. To the contrary of the classical versions, the constructive versions have a clear computational content.
  This paper investigates the computational relationship between three possible constructive definitions of the valuative dimension of a commutative ring. In doing so, it proves these constructive versions to be equivalent within constructive mathematics.
\end{abstract}

\smallskip \noindent Keywords: constructive mathematics, valuative dimension of a commutative ring, algorithms, Hilbert programme for abstract algebra.

\smallskip \noindent MSC: 13B40, 13J15, 03F65.

\newpage
\setcounter{tocdepth}{4}
\markboth{Contents}{Contents}

\printcontents[english]{}{1}{}

\setcounter{section}{0}
\setcounter{subsection}{0}
\setcounter{theorem}{0}

\section{Introduction}
This article is written in Bishop's style of constructive mathematics \citep*{Bi67,BB85,BR1987,ACMC,MRR,Yen2015}.

The vocabulary and notation of dynamical algebraic structures will be used when necessary: see \citealt*{CLR01,CL05,Lom06,Lom2020}.

In this paper, we compare the different constructive versions of the valuative dimension found in \citealt*{Coq2009,KY2020,CACM,ACMC}, as well as a constructive version which extends that of \citealt{Coq2009} to the case of a not necessarily integral ring.

\smallskip The reader who does not know about constructive mathematics in Bishop's style may look up Chapters 1 and 2 of \citealt{Bi67}, its reviews \citealt{stolzenberg70,myhill72}, and the paper \citealt{CL05}. 

\smallskip When a classical definition or a classical theorem uses abstract notions without computational content, constructive mathematicians try to find what they call a \textsl{constructive version} of this definition or theorem. This version has to be equivalent within classical mathematics to the classical one. Moreover, it is necessary that basic classical examples can be dealt with for the constructive version. E.g., the constructive version of a local ring is simply a ring in which, each time the sum of finitely many elements is invertible, one of these elements is invertible.

\subsection{Definition and characterisations of the valuative dimension in classical mathematics}\label{subsecdimval}

In classical mathematics, the valuative dimension of an integral ring $\gR$ is the maximal length $n$ of a chain of valuation rings $\gV_0\subsetneq \dots \subsetneq \gV_n=\gK$ in the field of fractions $\gK=\Frac\gR$ which contain $\gR$.

The valuative dimension of an arbitrary ring is defined as the upper bound of the valuative dimension of its integral quotients \citep{Cah90}.

In classical mathematics, the following equivalences are well known ($\Kdim(\gR)$ denotes the Krull dimension of the ring $\gR$, i.e.\ the maximal length of a chain
of prime ideals in~$\gR$).

Let us recall that a discrete field is of Krull dimension $0$; the dimension of the trivial ring, which has no integral quotient, is by convention equal to $-1$.

\begin{theorem} \label{thclassdimval} Let $\gR$ be a nontrivial commutative integral ring with $\gK=\Frac(\gR)$. The following properties are 
equivalent.
\begin{enumerate}
\item\label{thclassdimval1} $\gR$ is of valuative dimension $\leq n$.
\item\label{thclassdimval2} For any integer $k$ and all $x_1,\dots,x_k\in \gK$, $\Kdim(\gR[\xk])\leq n$.
\item\label{thclassdimval3} For any integer $k$, $\Kdim(\gR[\Xk])\leq n+k$.
\item\label{thclassdimval4} $\Kdim(\gR[\Xn])\leq 2n$.
\end{enumerate}
Moreover, without supposing $\gR$ to be integral, but supposing it to be nontrivial, Items~\ref{thclassdimval1}, \ref{thclassdimval3}, and~\ref{thclassdimval4} are still equivalent. 
\end{theorem}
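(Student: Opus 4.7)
The plan is to prove the cycle (1) $\Rightarrow$ (2) $\Rightarrow$ (3) $\Rightarrow$ (4) $\Rightarrow$ (1) under the integrality hypothesis, and then to reduce the equivalence of items (1), (3), (4) in the general case to this cycle via integral quotients. The whole argument rests on two classical pillars: the inequality $\Kdim(\gA) \leq \vdim(\gA)$ for any integral ring $\gA$, obtained by dominating the local ring at the top of a chain of primes by a valuation ring of $\Frac(\gA)$; and Jaffard's formula $\vdim(\gR[\Xk]) = \vdim(\gR) + k$, which follows from an Abhyankar-type inequality asserting that a valuation ring of $\gK(\Xk)$ adds at most $k$ to the Krull dimension of its restriction to $\gK$.

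For (1) $\Rightarrow$ (2), observe that for any $x_1,\dots,x_k \in \gK$, the ring $\gR[\xk] \subseteq \gK$ has $\gK$ as its field of fractions, and its valuation overrings form a subfamily of those of $\gR$, so $\vdim(\gR[\xk]) \leq n$; the first pillar then yields $\Kdim(\gR[\xk]) \leq n$. For (2) $\Rightarrow$ (3), note that (2) implies $\vdim(\gR) \leq n$ via the characterisation $\vdim(\gR) = \sup_{\gA} \Kdim(\gA)$ taken over finitely generated $\gR$-subrings $\gA$ of $\gK$; the second pillar then yields $\Kdim(\gR[\Xk]) \leq \vdim(\gR[\Xk]) = \vdim(\gR) + k \leq n+k$. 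The implication (3) $\Rightarrow$ (4) is the special case $k = n$.

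The crux of the theorem is (4) $\Rightarrow$ (1), which asserts that the single value $\Kdim(\gR[\Xn])$ already determines the finite invariant $\vdim(\gR)$. The contrapositive is: if $\vdim(\gR) \geq n+1$, produce a chain of primes of length $2n+1$ in $\gR[\Xn]$. Start from a valuation overring $\gV \supseteq \gR$ in $\gK$ with $\Kdim(\gV) \geq n+1$; since $\gV$ is Pr\"ufer, Seidenberg's formula gives $\Kdim(\gV[\Xn]) \geq 2n+1$. The delicate step, which is the main obstacle, is to transfer such a chain to $\gR[\Xn]$: a naive contraction of primes from $\gV[\Xn]$ may collapse strict inclusions, so one follows Jaffard's strategy and builds the chain in $\gR[\Xn]$ directly, interleaving the prime chain of $\gV$ with the polynomial generators $X_1,\dots,X_n$ and using common denominators to bring the required witnesses back into $\gR[\Xn]$.

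For the non-integral case, invoke the definition $\vdim(\gR) = \sup_{\fp \in \Spec \gR} \vdim(\gR/\fp)$ together with the compatibility $\Kdim(\gR[\Xk]) = \sup_{\fp} \Kdim((\gR/\fp)[\Xk])$; these reduce the equivalences (1) $\Leftrightarrow$ (3) $\Leftrightarrow$ (4) directly to the integral case applied to each quotient $\gR/\fp$.
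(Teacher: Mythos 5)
The paper states Theorem~\ref{thclassdimval} as a well-known classical result and supplies no proof; it serves only as motivation for the constructive notions $\vdim$, $\Vdim$, $\dimv$ whose mutual equivalence is the paper's actual subject. So there is no paper proof for your argument to be compared against, and I can only evaluate the sketch on its own merits.

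Your outline assembles the right classical ingredients: $\Kdim(\gA)\leq\vdim(\gA)$ for a domain $\gA$ (by dominating the local ring at the top of a prime chain by a valuation overring), Jaffard's formula $\vdim(\gR[\Xk])=\vdim(\gR)+k$ via Abhyankar's inequality, Seidenberg's equality for Pr\"ufer domains, and the reduction to integral quotients in the general case. Two remarks. First, your step $(2)\Rightarrow(3)$ passes through ``$\vdim(\gR)\leq n$'' via the characterisation $\vdim(\gR)=\sup_\gA\Kdim(\gA)$ over finitely generated $\gR$-subalgebras $\gA\subseteq\gK$; but that characterisation is precisely the equivalence $(1)\Leftrightarrow(2)$ restated. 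What you actually establish is $(2)\Rightarrow(1)$ and then $(1)\Rightarrow(3)$, a star-shaped argument centred on Item~1 rather than the announced cycle --- logically fine, but it should be described as such. Second, and more substantively, the crux $(4)\Rightarrow(1)$ is sketched too loosely. You rightly warn that naive contraction of a long chain from $\gV[\Xn]$ to $\gR[\Xn]$ may collapse inclusions (since $\gV$ is neither integral over $\gR$ nor in general a localisation of it), and that one must build the chain directly inside $\gR[\Xn]$. But ``interleaving the prime chain of $\gV$ with the polynomial generators\dots and using common denominators'' is a gesture, not an argument; this step carries essentially all the content of the theorem, and it needs either the explicit Jaffard-style construction or a precise reference. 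The closing reduction of $(1)\Leftrightarrow(3)\Leftrightarrow(4)$ to the integral case via $\vdim(\gR)=\sup_\fp\vdim(\gR/\fp)$ and $\Kdim(\gR[\Xk])=\sup_\fp\Kdim\bigl((\gR/\fp)[\Xk]\bigr)$ is standard and correct.
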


The paper \citealt{KY2020}, which follows \citealt{KV2014}, proposes a new further  characterisation of the valuative dimension of a commutative ring, inspired by the constructive characterisation of Krull dimension given in \citealt{Lom06}. The characterisation in  \citealt{KY2020} is fully constructive.

\smallskip We are thus in possession of at least three possible constructive approaches to the valuative dimension of a commutative ring defined within classical mathematics: the one corresponding to Item~\textsl{\ref{thclassdimval1}} above, the one corresponding to Items~\textsl{\ref{thclassdimval3}} and~\textsl{\ref{thclassdimval4}}, and the one proposed by Kemper and Yengui. 

\smallskip
We propose in Section~\ref{secdival2} to recall the precise constructive definitions concerning these three approaches. 

We denote by $\Vdim(\gR)$ a constructive definition corresponding to the characterisation given in Item \textsl{1} of Theorem~\ref{thclassdimval}.

We denote by $\vdim(\gR)$ a constructive definition corresponding to the characterisation given in Item \textsl{3} of Theorem~\ref{thclassdimval}.

We denote by $\dimv(\gR)$ the constructive definition given by Kemper and Yengui.
  
These constructive definitions have already been shown to be equivalent to the classical definition within classical mathematics, at least in the case of an integral domain.  

In Section~\ref{secdival3} we prove constructively the equivalence of these three definitions in all generality.

\subsection{Basic constructive terminology}\label{subsectermcov}

A subset~\(P\) of a set~\(E\) is said to be \textsl{detachable} when the property~$x\in P$ is decidable for~\(x\in E\). In other words, the following rule is satisfied:

\Regles{\labu $\vd x\in P \MA{\tsbf{ or }} x\notin P$}

\noindent In order to describe this situation, it is therefore necessary to introduce both the membership predicate and the opposite predicate.

We say that a ring is \textsl{integral} (or that it is an \textsl{integral domain}) when any element is zero or regular, and that a ring is a \textsl{discrete field} when any element is zero or invertible. This does not exclude the trivial ring.

A ring is said to be \textsl{without zerodivisor} when the rule

\Regles{\labu $\,\, xy=0\Vd x=0 \MA{\tsbf{ or }} y=0$}

\noindent is satisfied. An integral ring has no zerodivisor. The converse, valid within classical mathematics, is not guaranteed constructively.\footnote{In constructive mathematics, \gui{or} has its intuitive meaning, i.e.\ one of the two properties is explicitly valid. From the fact that the ring is without zerodivisor, with an explicit \gui{or}, there is no constructive proof that any element is zero or regular, with an explicit \gui{or}.}

Some local versions of the notions of integral ring and ring without zerodivisor are discernible even within classical mathematics.

A ring~\(\gR\) is said to be \textsl{locally without zerodivisor} (or a \textsl{pf-ring}: ``principal ideals are flat'') when the following rule is satisfied:

\Regles{\labu $\,\, ab=0\Vd \,\Exists s,t\;(sa=0\tsbf{,}\, tb=0\tsbf{,}\, s+t=1)$}

\noindent
Then in~\(\gR[1/s]\) the element~$a$ is zero, and in~\(\gR[1/t]\) the element~\(b\) is zero.\footnote{In classical mathematics, elements of a ring can be seen as \gui{functions} defined on the Zariski spectrum.  Here we have two basic open sets $\rD(s)$ and $\rD(t)$ which cover the Zariski spectrum; on the first one $a=0$, on the second one $b=0$.}

A ring~\(\gR\) is said to be a \textsl{pp-ring} (``principal ideals are projective'') if the annihilator~\(\Ann_\gR(a)\) of any element~\(a\) is generated by a (necessarily unique) idempotent, denoted by~\(1-e_a\). We have
$\gR\simeq \gR[1/e_a]\times \aqo{\gR}{e_a}$.
In the ring~\(\gR[1/e_a]\), the element~\(a\) is regular; in~\(\aqo{\gR}{e_a}\), $a$ is zero.\footnote{In classical mathematics, we have a partition of the Zariski spectrum into two basic open sets $\rD(1-e_a)$ and $\rD(e_a)$; on the first one $a=0$, on the second one $a$ is regular.}
A pp-ring is locally without zerodivisor, but the converse does not hold.
Note that we have $e_{ab}=e_a e_b$, $e_aa=a$, and $e_0=0$.
Pp-rings have a purely equational definition. Suppose indeed that a commutative ring is endowed with a unary law~\(a\mapsto
\ci{a}\) which satisfies the following three axioms:
\begin{equation}\label{eqaqis}
\ci{a}\,a=a,\quad
\ci{(ab)}=\ci{a}\,\ci{b},\quad
\ci{0}=0\text.
\end{equation}
Then, for all~\(a\in\gR\), $\Ann_\gR(a)=\gen{1-\ci{a}}$ and $\ci{a}$
is idempotent, so that~\(\gR\) is a pp-ring.

\begin{lemma}[pp-ring splitting lemma]\label{thScindageQi}
Consider $n$ elements~\(x_1,\dots,x_n\) in a pp-ring~$\gR$.
There exists a fundamental system of orthogonal idempotents~\((e_j)\) of cardinal~$2^n$ such that in each of the
components~$\gR[1/e_j]$, each~$x_i$ is zero or regular.
\end{lemma}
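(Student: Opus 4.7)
The plan is to iterate the basic two-way decomposition available in a pp-ring: for any single element $a\in\gR$, the attached idempotent $e_a=\ci{a}$ satisfies $e_a a=a$ and $\Ann_\gR(a)=\gen{1-e_a}$, whence the product decomposition $\gR\simeq\gR[1/e_a]\times\aqo{\gR}{e_a}$ in which $a$ is regular in the first factor and zero in the second. Applying this splitting once for each of the $n$ given elements will produce $2^n$ components with the required property; the equational axioms~\eqref{eqaqis} guarantee that factors of a pp-ring are again pp-rings, so the iteration makes sense.

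The cleanest way is to avoid induction and build the decomposition in one stroke. Set $e_i=\ci{x_i}$ and, for each $I\subseteq\{1,\ldots,n\}$, define
$$e_I=\prod_{i\in I}e_i\cdot\prod_{i\notin I}(1-e_i).$$
First I would verify that the $2^n$ elements $e_I$ form a fundamental system of orthogonal idempotents: expanding $1=\prod_{i=1}^n\bigl(e_i+(1-e_i)\bigr)$ yields $\sum_I e_I=1$, and orthogonality follows from $e_i(1-e_i)=0$. Then, in each factor $\gR[1/e_I]$, I would argue that $e_i$ becomes equal to $1$ for $i\in I$ (being an idempotent factor of the invertible $e_I$) and that $e_i$ becomes $0$ for $i\notin I$ (since $(1-e_i)$ is then a factor of $e_I$, giving $e_i e_I=0$). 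Consequently, for $i\in I$ the ideal $\Ann(x_i)=\gen{1-e_i}$ becomes zero and $x_i$ is regular, whereas for $i\notin I$ the identity $x_i=e_i x_i$ forces $x_i=0$.

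The main obstacle I expect is the careful bookkeeping in the localizations: checking that $x_i$ really is regular (and not just that its old annihilator becomes trivial) in $\gR[1/e_I]$ when $i\in I$ requires the standard identification $\gR[1/e]\simeq\aqo{\gR}{1-e}$ for an idempotent~$e$, together with a short calculation. Namely, any $y$ with $y x_i=0$ in $\gR[1/e_I]$ lifts to a relation $e_I^m y x_i=0$ in $\gR$, so $e_I^m y\in\gen{1-e_i}$, and multiplying again by $e_I^m$ annihilates this element because $(1-e_i)e_I=0$; hence $y$ already vanishes in the factor. Once this equational bookkeeping is settled, the rest is routine manipulation of the idempotents $e_i$.
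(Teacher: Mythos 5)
Your proof is correct and is essentially the standard argument (the paper itself states this lemma as a recall from the reference and gives no proof): defining $e_I=\prod_{i\in I}e_i\prod_{i\notin I}(1-e_i)$, checking $\sum_I e_I=1$ and pairwise orthogonality, and then verifying in each $\gR[1/e_I]\simeq\gR/\gen{1-e_I}$ that $x_i$ is regular for $i\in I$ and zero for $i\notin I$. One small simplification worth noting: since $e_I$ is idempotent, $yx_i=0$ in $\gR[1/e_I]$ already means $e_I yx_i=0$ in $\gR$ with no power needed, which makes the last calculation ($e_I y\in\gen{1-e_i}$ and $(1-e_i)e_I=0$, hence $e_I y=0$) even shorter.
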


The fact that a pp-ring can be systematically split into two components
leads to the following general method.
The essential difference with the previous splitting lemma
is that we do not know a priori the finite family of elements that
will cause the splitting.

\rdb
\mni {\bf Elementary local-global machinery No.~1.}\label{MethodQI}
{\sl Most algorithms that work with nontrivial integral rings can be modified to work with pp-rings by splitting the ring
into two components whenever the algorithm written for integral rings
uses the test
“{is this element zero or regular?}”. In the first component the element in question
is zero, in the second one it is regular}.

\medskip We say that an ideal is \textsl{prime} if it produces a quotient ring without zerodivisor. This does not exclude the ideal~\(\gen{1}\).
These conventions \citep[adopted in][]{CACM} do not use negation and avoid some constructively offensive case-by-case reasonings.

\subsection{Valuation rings}\label{subsubsecdomval}

 A \textsl{valuation ring}~\(\gV\) is a subring of a discrete field~\(\gK\)
satisfying the axiom

\Regles{\labu $\,\, xy=1 \Vd x\in \gV \MA{\tsbf{ or }} y\in \gV \quad (x,y\in\gK)$}

We then say that $\gV$ is a \textsl{valuation ring of the discrete field $\gK$} and that $(\gK,\gV)$ is a \textsl{valued field}.

A valuation ring is the same as a local Bézout domain, or as an integral ring whose divisibility group is totally ordered.

In a valued field $(\gK,\gV)$, we say that $x$ divides $y$ and we write $x\di y$ if there exists a $z\in\gV$ such that $xz=y$.
We denote by $\Gamma(\gV)$ (or by~$\Gamma$ if the context is clear) the group $\gK\eti\!/\gV\eti$ (noted additively), with the order relation~$\leq$ induced by the relation $\di$ on $\gK\eti$. We let $\Gamma_\infty=\Gamma\cup\so\infty$ (where $\infty$ is introduced as a maximal element). Under these conditions, the natural application $v\colon\gK\to\Gamma_\infty$ is called the \textsl{valuation} of the valued field. We have
\[
v(xy)=v(x)+v(y), \;\text{and }v(x+y)\geq \min(v(x),v(y)) \text{ with equality if } v(x)\neq v(y).
\]
We also have $\gV=\sotq{x\in\gK}{v(x)\geq 0}$ and 
$\gV\eti=\sotq{x\in\gK}{v(x)= 0}$.

\subsection{Dimension of a distributive lattice}\label{subsecdimtrdi}

In this paragraph, we explain the constructive definition for the dimension of a distributive lattice. A distributive lattice $\gT$ can be seen as the set of compact open subsets of a spectral space, which is called the dual space of $\gT$. The definition of this dimension agrees within classical mathematics with the dimension of the dual spectral space, which is also the maximal length of chains of prime ideals in $\gT$: see Theorem~\ref{th-dico-trdi-spec-dim1}.

\smallskip An \textsl{ideal}  $\fb $ of a distributive lattice $(\gT,\vi,\vu,0,1)$ is a
subset that satisfies the conditions
\begin{equation}\label{eqIdeal}
\left.
\begin{array}{rcl}
  & & 0 \in \fb  \\
x,y\in \fb & \implies  & x\vu y \in \fb  \\
x\in \fb ,\; z\in \gT& \implies  & x\vi z \in \fb\text.  \\
\end{array}
\right\}
\end{equation}
Let us denote by $\gT/(\fb=0)$ the quotient lattice obtained by forcing the elements of $\fb$ to be zero.
We can also define the ideals as the kernels of morphisms.

A \textsl{principal ideal} is an ideal generated by a single element $a$: it is denoted by $\dar a$ and
we have $\dar a=\sotq{x\in\gT}{x\leq a}$.
This ideal, endowed with the laws $\vi$ and $\vu$ from~$\gT$, is a distributive lattice
in which the maximal element is $a$. The canonical injection $\dar
a\rightarrow \gT$ is not a morphism of distributive lattices because
the image of $a$ is not equal to $1$. On the other hand,
the application $\gT\rightarrow \dar a,\;x\mapsto x\vi a$
is a surjective morphism which endows $\dar a$ with the
quotient structure of $\gT/(a=1)$.

The notion of \textsl{filter} is the opposite notion (i.e.\ obtained by reversing the order relation) to that of ideal.

The following rule, called \textsl{cut}, is particularly important
for distributive lattices:
\begin{equation}\label{coupure1}
  (x\vi a \leq  b) \vii  (a \leq x\vu  b)
 \implies  (a \leq  b).
\end{equation}

If $A\in\Pfe(\gT)$ (the set of finitely enumerated subsets of~$\gT$), we let
\[
  \ndsp \Vu A:=\Vu_{x\in A}x\qquad \text{and}\qquad \Vi A:=\Vi_{x\in A}x.
\]

We denote by $A \vdash_\gT B$ the relation defined as follows on the set $\Pfe(\gT)$:
\[
  A \vdash_\gT B \; \; \equidef\; \; \Vi  A\;\leq \;
\Vu  B.
\]

This relation satisfies the following axioms, in which we
write $x$ for $\{x\}$ and $A, B$ for~$A\cup B$:
\vspace{-.5em}
\[
  \arraycolsep3pt\begin{array}{rcrclll}
& & x &\vda& x &\; &(R) \\[1mm]
 \text{if } A \vda B & \text{then} & A,A' &\vda& B,B' &\; &(M) \\[1mm]
\text{if } (A,x \vda B) \text{ and } (A \vda B,x)
& \text{then} & A &\vda& B\text. &\;
&(T)
\end{array}
\]
The relation is said to be \textsl{reflexive}, \label{remotr} \textsl{monotone}, and
\textsl{transitive}.
The third axiom (transitivity) can be seen as a
generalisation of Rule~(\ref{coupure1}) and is also called cut.

\begin{definition}
\label{defEntrel}
For an arbitrary set $S$, a relation on $\Pfe(S)$ that is
reflexive, monotone, and transitive is
called an {\sl entailment relation}.
\end{definition}

The following theorem is fundamental. It states that the
three axioms of entailment relations are exactly what is needed for
the distributive-lattice interpretation to work.

\begin{theorem}[fundamental theorem of entailment relations, \citealt{Lor1951,CC00}] \label{thEntRel1}~ \
Let $S$ be a set with an entailment relation
$\vdash$ on $\Pfe(S)$. Consider the distributive lattice $\gT$ defined by
generators and relations as follows: the generators are the
elements of $S$ and the relations are
\[
   A\; \vdash_\gT \;  B
\]
each time $A\; \vdash \; B$.  Then, for all $A$,
 $B$ in $\Pfe(S)$, we have
\[
    {A\; \vdash_\gT \;  B}
    \implies  {A\; \vdash \;  B}\text.
\]
\end{theorem}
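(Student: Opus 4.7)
The forward direction $A \vdash B \Rightarrow A \vdash_\gT B$ is immediate from the presentation of $\gT$. For the converse, my plan is to build a distributive lattice $\mathbf{D}$ and a map $\iota : S \to \mathbf{D}$ such that (a) $\iota$ validates every generating relation, so that the universal property of $\gT$ extends $\iota$ to a morphism $\gT \to \mathbf{D}$, and (b) for every $A, B \in \Pfe(S)$, the inequality $\Vi \iota(A) \leq \Vu \iota(B)$ in $\mathbf{D}$ already implies $A \vdash B$. Chaining (a) and (b) then closes the argument.

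The candidate for $\mathbf{D}$ is the quotient of $\Pfe(\Pfe(S))$, viewed as formal finite joins of finite meets and equipped with $\mathcal{A} \vu \mathcal{B} := \mathcal{A} \cup \mathcal{B}$ and $\mathcal{A} \vi \mathcal{B} := \{A \cup B' : A \in \mathcal{A},\ B' \in \mathcal{B}\}$, by the equivalence associated with the preorder
\[
\mathcal{A} \preceq \mathcal{B} \;:\Longleftrightarrow\; \forall A \in \mathcal{A},\ \forall \sigma \in \prod\nolimits_{B \in \mathcal{B}} B,\ A \vdash \{\sigma(B) : B \in \mathcal{B}\}.
\]
This encodes the distributive expansion of $\Vu_{A \in \mathcal{A}} \Vi A \leq \Vu_{B \in \mathcal{B}} \Vi B$ into one entailment per transversal. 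The plan then asks me to verify: (i) reflexivity and monotonicity of $\preceq$, which follow from (R) and (M); (ii) transitivity of $\preceq$; (iii) that $\vu$ and $\vi$ descend to a distributive-lattice structure on the quotient, distributivity arising from the identity $\mathcal{A} \vi (\mathcal{B} \cup \mathcal{C}) = (\mathcal{A} \vi \mathcal{B}) \cup (\mathcal{A} \vi \mathcal{C})$; and (iv) that with $\iota(s) := \{\{s\}\}$ one has $\Vi \iota(A) = \{A\}$ and $\Vu \iota(B) = \{\{b\} : b \in B\}$ in $\mathbf{D}$, so that $\{A\} \preceq \{\{b\} : b \in B\}$ unfolds verbatim to $A \vdash B$, yielding both (a) and (b) at once.

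Steps (i), (iii), (iv) are routine verifications. The main obstacle is (ii). Given $\mathcal{A} \preceq \mathcal{B} \preceq \mathcal{C}$, a fixed $A \in \mathcal{A}$, and a transversal $\tau$ of $\mathcal{C}$, one must synthesise a single entailment $A \vdash \tau(\mathcal{C})$ from the family $\{A \vdash \sigma(\mathcal{B}) : \sigma \text{ transversal of } \mathcal{B}\}$ together with $B \vdash \tau(\mathcal{C})$ for each $B \in \mathcal{B}$. This is achieved by a finite sequence of applications of the cut rule (T) eliminating the elements of $\bigcup \mathcal{B}$ from the right-hand side one at a time: for a single $B = \{b_1, \ldots, b_k\} \in \mathcal{B}$, successive cuts on $b_1, \ldots, b_k$ remove $B$ from the right-hand side, and iterating over $\mathcal{B}$ handles the general case. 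This nested application of (T) is where the full strength of the entailment axioms is used. Once (ii) is in hand, the universal property of $\gT$ yields the morphism $\gT \to \mathbf{D}$, and the conclusion $A \vdash B$ from $A \vdash_\gT B$ follows immediately from (iv).
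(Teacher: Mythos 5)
The paper itself gives no proof of Theorem~\ref{thEntRel1}; it cites \citealt{Lor1951} and \citealt{CC00}. Your proposal reconstructs the argument from the latter reference, and it is correct. The candidate $\mathbf{D}$ of formal disjunctive normal forms over $\Pfe(\Pfe(S))$ is the right object, and the preorder $\preceq$ is precisely the transcription of $\Vu_{A\in\mathcal{A}}\Vi A\leq\Vu_{B\in\mathcal{B}}\Vi B$ through the distributivity identity $\Vu_{B}\Vi B=\Vi_{\sigma}\Vu_{B}\sigma(B)$, one entailment per transversal. You correctly identify transitivity of $\preceq$ as the crux. To be precise about that step: given $\mathcal{A}\preceq\mathcal{B}\preceq\mathcal{C}$, a fixed $A\in\mathcal{A}$ and a transversal $\tau$ of $\mathcal{C}$, the elimination is a genuinely nested double induction, an outer induction removing the blocks $B_1,\dots,B_m$ of $\mathcal{B}$ one at a time and, for each block $B_p=\{b_{p,1},\dots,b_{p,k_p}\}$, an inner induction that cuts $b_{p,q}$ against the already-reduced sequent $A\vdash\tau(\mathcal{C}),b_{p,q},b_{p+1,j_{p+1}},\dots,b_{m,j_m}$ (available for every $q$ because you retain the full family of transversal hypotheses) and the weakened form of $B_p\vdash\tau(\mathcal{C})$. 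Your description (``successive cuts on $b_1,\dots,b_k$ remove $B$'') slightly understates the way the two inductions interleave, but the idea and its realisation by the rule $(T)$ are right. The remaining items — compatibility of $\vu$, $\vi$ with $\preceq$, absorption and distributivity on the quotient, and the unfolding at $\iota(s)=\{\{s\}\}$ to recover $\{A\}\preceq\{\{b\}:b\in B\}\iff A\vdash B$ — are indeed routine. With the universal property of $\gT$ applied to $\iota$, the argument closes as you describe.
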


In classical mathematics, a \textsl{prime ideal} $\fp$ of a distributive lattice $\gT\neq \Un$ is an ideal whose
complement $\fv$ is a filter (which is then a \textsl{prime filter}).  We then have $\gT/({\fp=0},\allowbreak{\fv=1})\simeq\Deux$.  It
is the same to give a prime ideal of $\gT$ or a morphism of
distributive lattices $\gT\rightarrow \Deux$.

\begin{theorem}[dimension of a distributive lattice, see {\citealt{CL2003,Lom02,Lom2020}}, {\citealt[chapter XIII]{CACM}}] \label{th-dico-trdi-spec-dim1}
In classical mathematics, the following properties are 
equivalent for a nontrivial distributive lattice and for~$n\geq 0$.
\begin{enumerate}
\item The lattice is of dimension $\leq n$, i.e.\ by definition the length of any chain of prime ideals is $\leq n$.
\item For any $x\in\gT$, the quotient lattice $\gT/(x=0,I_x=0)$ is of dimension $\leq n-1$, where $I_x=\sotq{y}{\allowbreak x\vi y=0}$.\footnote{A lattice is said to be of dimension $-1$ if it is trivial, i.e.\ reduced to a point; this initialises the induction in Item~\textsl{2}. It is easy to check that a lattice is zero-dimensional, i.e.\ of dimension $\leq 0$, if, and only if, it is a Boolean algebra.}
\item
For any sequence $(x_0,\dots,x_n)$ in $\gT$ there exists a sequence $(y_0,\dots,y_n)$ that is \textup{complementary} in the following sense:
\begin{equation}\label{eqC2G}
\left.\arraycolsep3pt
\begin{array}{rcl}
1& \vda  &   y_n, x_n\\
 y_n,  x_n & \vda  &  y_{n -1}, x_{n -1}  \\
\vdots~~~~& \vdots  &~~~~  \vdots \\
  y_1, x_1& \vda  &  y_0, x_0  \\
y_0, x_0& \vda  & 0\text.
\end{array}
\right\}
\end{equation}
\end{enumerate}
\end{theorem}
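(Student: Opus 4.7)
The plan is to establish the cycle $(3) \Rightarrow (1) \Rightarrow (2) \Rightarrow (3)$, exploiting the spectral duality that identifies prime ideals of $\gT$ with morphisms $\gT \to \Deux$. Throughout I use the fact that a prime ideal is a proper ideal $\fp$ with the property $u \wedge v \in \fp \Rightarrow u \in \fp \MA{\tsbf{ or }} v \in \fp$.

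For $(3) \Rightarrow (1)$, I argue by contradiction: assume a chain $\fp_0 \subsetneq \fp_1 \subsetneq \cdots \subsetneq \fp_{n+1}$ exists, and pick witnesses $x_i \in \fp_{i+1} \setminus \fp_i$ for $i = 0, \dots, n$. Apply (3) to $(x_0, \dots, x_n)$ to obtain a complementary $(y_0, \dots, y_n)$. The bottom relation $y_0 \wedge x_0 \leq 0$ gives $y_0 \wedge x_0 \in \fp_0$, and since $x_0 \notin \fp_0$, primality forces $y_0 \in \fp_0$. Inductively, the relation $y_i \wedge x_i \leq y_{i-1} \vee x_{i-1}$ lands in $\fp_i$ (because $y_{i-1}, x_{i-1} \in \fp_i$, and ideals are closed under $\vee$ and downward), whence $x_i \notin \fp_i$ and primality yield $y_i \in \fp_i$. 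The top relation $1 \leq y_n \vee x_n$ combined with $y_n, x_n \in \fp_{n+1}$ then forces $1 \in \fp_{n+1}$, contradicting nontriviality.

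For $(1) \Rightarrow (2)$, I use spectral duality: a morphism $\gT \to \Deux$ factors through $\gT/(x=0, I_x=0)$ iff the corresponding prime $\fp$ contains both $x$ and $I_x$. The crucial observation is that any prime $\fp$ with $x \notin \fp$ automatically contains $I_x$, since $y \in I_x$ means $x \wedge y = 0 \in \fp$ and primality plus $x \notin \fp$ force $y \in \fp$. Hence, given a chain $\fq_0 \subsetneq \cdots \subsetneq \fq_{n-1}$ of length $n-1$ in the quotient, I lift it to a chain of length $n$ in $\gT$ by prepending some $\fp_0 \subsetneq \fq_0$ with $x \notin \fp_0$; such $\fp_0$ exists by Krull's prime-ideal theorem applied inside the localised quotient, because the image of $x$ there is nonzero (otherwise $\fq_0$ would have contained it via $I_x$).

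For $(2) \Rightarrow (3)$, I proceed by induction on $n$. The case $n = 0$ reduces to the fact that a zero-dimensional distributive lattice is a Boolean algebra, so $y_0 := \neg x_0$ works. For the step, apply (2) with $x := x_n$: the quotient $\gT/(x_n = 0, I_{x_n} = 0)$ has dimension $\leq n-1$ and inherits property (2), so by induction it admits a complementary sequence for the images of $x_0, \dots, x_{n-1}$. Lift this sequence to elements $y_0, \dots, y_{n-1} \in \gT$, choose $y_n$ as a suitable lift of the complement of $\bar{x}_n$ in the quotient chain, and verify the relations in $\gT$ — adjusting the lifts by $\vee$'ing or $\wedge$'ing with $x_n$-dependent elements as needed.

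The main obstacle will be the implication $(1) \Rightarrow (2)$, where I must produce a prime $\fp_0 \subsetneq \fq_0$ avoiding $x$; this relies essentially on Krull's theorem and requires careful verification that the image of $x$ in $\gT/(I_x = 0)$ does not become a unit at $\fq_0$. A secondary difficulty lies in the lifting step of $(2) \Rightarrow (3)$: the lifts of the $\bar{y}_i$ satisfy the complementarity relations only modulo $(x_n = 0, I_{x_n} = 0)$, so a small adjustment — exploiting that the discrepancies lie in the ideal generated by $x_n$ and $I_{x_n}$ — is needed to make the relations hold on the nose in $\gT$.
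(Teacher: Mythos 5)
The paper itself does not prove Theorem~\ref{th-dico-trdi-spec-dim1}; it is cited from the references (Coquand--Lombardi 2003, Lombardi 2002 and 2020, and chapter~XIII of \citealt{CACM}). So you are supplying an argument the paper delegates, and I evaluate it on its own merits.

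Your $(3)\Rightarrow(1)$ is correct: the chain of primality deductions $y_i\in\fp_i$ and the final contradiction $1\in\fp_{n+1}$ are exactly right. Your $(1)\Rightarrow(2)$ is correct in substance; the existence of a prime $\fp_0\subsetneq\fq_0$ with $x\notin\fp_0$ follows because the filter generated by $x$ and the complement of $\fq_0$ is proper (if $v\notin\fq_0$ had $v\wedge x=0$ then $v\in I_x\subseteq\fq_0$, contradiction), and then one separates; you hand-wave this a bit, but the key observation — $x\notin\fp\Rightarrow I_x\subseteq\fp$ — is the right one.

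The real gap is in $(2)\Rightarrow(3)$: you apply the boundary quotient at $x:=x_n$, but the correct choice is $x:=x_0$. The quotient $\gT/(x=0,I_x=0)$ forces $x$ to $0$; in the complementary sequence~\eqref{eqC2G} the element that plays the role of ``tending to $0$'' is $x_0$, appearing in the bottom relation $y_0\wedge x_0\vdash 0$, not $x_n$ (which appears in $1\vdash y_n\vee x_n$, a relation about tending to $1$). With your choice $x:=x_n$, lifting the complementary sequence for $(\bar x_0,\dots,\bar x_{n-1})$ from $\gT/(x_n=0,I_{x_n}=0)$ gives only $y_0\wedge x_0\leq x_n\vee z$ with $z\wedge x_n=0$; no adjustment by $\vee$- or $\wedge$-ing with elements of the $x_n$-boundary ideal can turn this into $y_0\wedge x_0\leq 0$ on the nose, since a distributive lattice has no relative complements to ``cut off'' the slack $x_n\vee z$. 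With $x:=x_0$ instead, the lift gives $y_1\wedge x_1\leq x_0\vee z$ and $y_i\wedge x_i\leq y_{i-1}\vee x_{i-1}\vee x_0\vee z$ (collecting all boundary elements into a single $z\in I_{x_0}$), and then setting $y_0:=z$ and $y_i':=y_i\vee x_0\vee z$ for $i\geq 1$ verifies all the relations exactly: the slack $x_0\vee z$ is precisely $y_0\vee x_0$, the right-hand side one level up, and $y_0\wedge x_0=z\wedge x_0=0$ holds by construction. Your phrase ``the complement of $\bar x_n$ in the quotient chain'' also betrays the confusion: $\bar x_n=0$ in your quotient, so its complement would be $1$, which leads nowhere.

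A secondary, repairable issue: to apply the induction hypothesis to $\gT/(x_0=0,I_{x_0}=0)$ you need to know it satisfies~(2) for $n-1$, not just~(1); this follows from the already-established unconditional implication $(1)\Rightarrow(2)$, but you should say so explicitly rather than asserting the quotient ``inherits property~(2)''.
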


For example, for $n=2$, the inequalities in~\eqref{eqC2G} correspond to the following diagram in~$\gT$.
\[
  \SCo{1.5pt}{1.2cm}{x_0}{x_1}{x_2}{y_0}{y_1}{y_2}
\]

\noindent \textsl{Important definition and remark.} Items~\textsl{2} and~\textsl{3} are equivalent in constructive mathematics and are used for defining the dimension of $\gT$, denoted by $\Kdim(\gT)$ and called \textsl{Krull dimension} of the distributive lattice.

\noindent Note however that from a constructive point of view, only the assertion~``$\Kdim(\gT)\leq n$'' has been clearly defined. 
In order to settle ``$\Kdim(\gT)= n$'' it would be necessary to prove furthermore $\lnot (\Kdim(\gT)\leq n+1)$. Fortunately, most classical theorems have an assumption of the form $\Kdim(\gT)\leq n$.
\eoe

We also have the following constructive result which simplifies the use of Item~\textsl{3}.

\begin{lemma} \label{lem-trdi-spec-dim}
Let $S$ be a subset of a distributive lattice $\gT$ which generates $\gT$ as a distributive lattice.
In constructive mathematics, for $n\geq 0$, the following properties are 
equivalent.
\begin{enumerate}
\item
For any sequence $(x_0,\dots,x_n)$ in $\gT$ there exists a complementary sequence $(y_0,\allowbreak\dots,y_n)$ in $\gT$.
\item
For any sequence $(x_0,\dots,x_n)$ in $S$ there exists a complementary sequence $(y_0,\allowbreak\dots,y_n)$ in $\gT$.
\end{enumerate}

\end{lemma}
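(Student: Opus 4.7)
The implication from Item~\textsl{1} to Item~\textsl{2} is immediate since $S\subseteq\gT$. For the converse, the plan is to reduce the problem to a \emph{substitution lemma} stating that, for any fixed index $j\in\lrb{0..n}$ and any fixed choice of the $x_i$ for $i\neq j$, the set
\[T_j=\sotq{z\in\gT}{(x_0,\dots,x_{j-1},z,x_{j+1},\dots,x_n)\text{ admits a complementary sequence in }\gT}\]
is stable under both operations $\vi$ and $\vu$. Once this is established, hypothesis~\textsl{2} provides a starting point where all coordinates lie in~$S$; then, using the fact that $\gT$ is generated by $S$ as a distributive lattice and applying the substitution lemma iteratively to each of the $n+1$ coordinates (first let $x_0$ range over $\gT$ with $x_1,\dots,x_n\in S$, then let $x_1$ range over $\gT$ with this $x_0$ and $x_2,\dots,x_n\in S$, and so on), we obtain complementary sequences whenever $(x_0,\dots,x_n)\in\gT^{n+1}$, which is Item~\textsl{1}.

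The bulk of the work is the substitution lemma. Given complementary sequences $(y_0,\dots,y_n)$ and $(y'_0,\dots,y'_n)$ for $(x_0,\dots,x_{j-1},a,x_{j+1},\dots,x_n)$ and $(x_0,\dots,x_{j-1},b,x_{j+1},\dots,x_n)$ respectively, the plan is to exhibit an explicit complementary sequence $(y''_0,\dots,y''_n)$ in each of the two cases $x_j=a\vi b$ and $x_j=a\vu b$. The recipe I propose is asymmetric with respect to~$j$: set $y''_i=y_i\vi y'_i$ for $i>j$ and $y''_i=y_i\vu y'_i$ for $i<j$; at position~$j$ itself set $y''_j=y_j\vu y'_j$ in the $\vi$-case and $y''_j=y_j\vi y'_j$ in the $\vu$-case. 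Each of the $n+2$ entailments of the staircase~\eqref{eqC2G} is then verified by distributivity combined with the staircase inequalities of the two given complements; away from~$j$ these checks reduce to the distributive identities $(u\vu x)\vi(v\vu x)=(u\vi v)\vu x$ and $(u\vi x)\vu(v\vi x)=(u\vu v)\vi x$, whereas at the two entailments adjacent to~$j$ one invokes the elementary bound $(u\vu a)\vi(v\vu b)\leq(u\vu v)\vu(a\vi b)$ in the $\vi$-case and its dual $(u\vi a)\vu(v\vi b)\geq(u\vi v)\vi(a\vu b)$ (in fact $(u\vi y_j')\vu(y_j\vi v)\vu\cdots$ expanded by distributivity) in the $\vu$-case.

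The principal obstacle is pinpointing this asymmetric recipe: a uniform choice (all meets or all joins) would fail at position~$j$, because the top inequality of the staircase (with~$1$ on the left) is naturally preserved by meets while the bottom inequality (with~$0$ on the right) is preserved by joins, and the modified position must therefore be treated oppositely to its neighbours in order to control the mixed terms arising when one expands products like $(y_j\vu a)\vi(y'_j\vu b)$. Once this combinatorial choice is in hand, the verifications are routine distributive-lattice manipulations; the induction on the $\vi,\vu$-term complexity of each coordinate, together with the coordinate-by-coordinate extension described above, then closes the argument.
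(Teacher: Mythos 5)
The paper does not actually give a proof of Lemma~\ref{lem-trdi-spec-dim}: it is stated as a known constructive result and left to the cited references (\citealt{CL2003,Lom02,Lom2020} and chapter~XIII of \citealt{CACM}), so there is no internal proof to compare your attempt against.

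That said, your argument is correct and self-contained. The asymmetric recipe — $y''_i=y_i\vi y'_i$ for $i>j$, $y''_i=y_i\vu y'_i$ for $i<j$, and $y''_j$ taken oppositely to the operation being replaced — does verify all $n+2$ inequalities of the staircase~\eqref{eqC2G}, by the distributive identities $(u\vu x)\vi(v\vu x)=(u\vi v)\vu x$ and $(u\vi x)\vu(v\vi x)=(u\vu v)\vi x$ away from~$j$, and by the expansions $(y_j\vu a)\vi(y'_j\vu b)\leq(y_j\vu y'_j)\vu(a\vi b)$ resp.\ $(y_j\vi y'_j)\vi(a\vu b)\leq(y_j\vi a)\vu(y'_j\vi b)$ at the two entailments adjacent to~$j$. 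Your observation that a uniform all-meets or all-joins choice fails is also correct (e.g.\ in the Boolean lattice $2^{\{1,2\}}$ with $y_j=\{1\}$, $a=\{2\}$, $y'_j=\{2\}$, $b=\{1\}$ one has $(y_j\vi y'_j)\vu(a\vi b)=0\ne1$). Two small points you leave implicit and should make explicit to close the argument completely: (i) $T_j$ also contains $0$ and $1$ (for $z=0$ replace $y_j$ by $1$ in any complement of a neighbouring sequence, for $z=1$ replace $y_j$ by $0$; both adjacent inequalities become trivial), which is needed since the sublattice generated by $S$ is the bounded one; and (ii) the coordinate-by-coordinate sweep should be phrased as an induction on $k$ showing that every sequence with $x_0,\dots,x_{k-1}\in\gT$ and $x_k,\dots,x_n\in S$ admits a complement, the base case $k=0$ being Item~\textsl{2} and the inductive step being exactly the substitution lemma applied at position~$k$. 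With these two additions the proof is complete.
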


In the two final paragraphs of this section, we explain how  spectra of distributive lattices give rise to spectral spaces in abstract algebra. The duality between distributive lattices and spectral spaces has been established by \citet*{Sto37}. \citet*{Hoc1969} introduces the terminology of spectral spaces and has popularised their use, but neither cites \citealt*{Sto37} nor indicates the link between spectral spaces and distributive lattices.

\subsection{Krull dimension of a commutative ring}\label{subsecdimcomring}

We now recall the main idea of the constructive approach of  \cite{Joy71,Joy75} to the spectrum of a commutative ring.

If $\fa$ is an ideal of $\gR$, we denote by $\DR(\fa)$ (or $\rD(\fa)$ if the context is clear) the
nilradical of the ideal $\fA$:
\begin{equation} \label{eqZar}
\begin{array}{rclcl}
\DR(\fa)&  = & \sqrt[\gR]{\fa} &=&\sotq{x\in\gR}{\Ex m\in\NN\;\; 
x^m\in\fa}\text.
\end{array}
\end{equation}
When $\fa=\gen{x_1,\ldots ,x_n}$, we denote $\DR(\fa)$ by
$\DR(x_1,\ldots ,x_n)$.
If the context is clear, we also write $\wi{x}$ for $\DR(x)$.

By definition, the {\sl Zariski lattice} of $\gR$, denoted by $\ZarR$, is the set of  $\DR(x_1,\ldots ,x_n)$’s with the order relation of inclusion. The greatest lower bound and least upper bound are given by 
\[
\DR(\fa_1)\vi\DR(\fa_2)=\DR(\fa_1\fa_2)\quad \mathrm{and} \quad
\DR(\fa_1)\vu\DR(\fa_2)=\DR(\fa_1+\fa_2).
\]
The Zariski lattice of $\gR$
is a distributive lattice and $\DR(x_1,\ldots ,x_n)=
\wi{x_1}\vu\cdots \vu\wi{x_n}$.
The elements  $\wi{x}$ form a system of generators (stable under $\vi$) of $\ZarR$.

If $M$  is the  multiplicative monoid in $\gR$ generated by $U=\so{u_1,\dots,u_m}$ and $\fa=\gen{\an}$ is a finitely generated ideal,
 we have the equivalences
\begin{equation} \label {eqZarA}
\Vi_{i\in \so{1,\dots,m}} \wi {u_i} \;\leq_{\ZarR} \Vu_{j\in \so{1,\dots,n}} \wi{a_j}
\quad\Longleftrightarrow \;
\prod_{u\in  \so{1,\dots,m}} u_i  \in \sqrt[\gR]{\fa}
\quad\Longleftrightarrow \quad
M\,\cap\, \fa\neq \emptyset\text.
\end{equation}

This describes completely the distributive lattice $\ZarR$. 
In fact, the first formula in~(\ref{eqZarA}) defines an entailment relation on $\gR$ which gives another description of $\ZarR$.

We also get the following characterisation \citep[see][]{CC00,CL2003}.
\begin{proposition}[Joyal's definition of the spectrum of a commutative ring]
\label{propZar}~\\
 The lattice $\ZarR$ is
(up to unique isomorphism) the lattice generated by the symbols $\DR(a)$ for $a\in\gR$
subject to the following relations:
\[
  \DR(0_\gR) =0\text{, }
  \DR(1_\gR)= 1\text{, }
  \DR(x+y) \leq \DR(x)\vu\DR(y)\text{, }
  \DR(xy) = \DR(x)\vi \DR(y)\text.
\]
\end{proposition}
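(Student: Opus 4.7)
My plan is to exhibit a distributive-lattice isomorphism between $\ZarR$ and the lattice $L$ presented by the symbols $\DR(a)$ ($a\in\gR$) modulo the four stated relations, by verifying the universal property. Since the four relations hold tautologically in $L$, it is enough to check that they hold in $\ZarR$ under $a\mapsto\wi a$, yielding a canonical morphism $\phi\colon L\to\ZarR$, and then to prove that $\phi$ is a bijection.

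The verifications in $\ZarR$ are routine: $\wi 0=0$ and $\wi 1=1$ are immediate from the definition; $\wi{x+y}\leq\wi x\vu\wi y$ holds because $x+y\in\gen{x,y}$, whence $\sqrt[\gR]{\gen{x+y}}\subseteq\sqrt[\gR]{\gen{x,y}}$; and $\wi{xy}=\wi x\vi\wi y$ holds because $\sqrt[\gR]{\gen{xy}}=\sqrt[\gR]{\gen{x}\gen{y}}=\sqrt[\gR]{\gen x}\cap\sqrt[\gR]{\gen y}$. Moreover $\phi$ is surjective because $\ZarR$ is generated as a join-semilattice by the $\wi a$'s.

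The crucial step is injectivity. Since the relation $\DR(xy)=\DR(x)\vi\DR(y)$ already allows one to rewrite every finite meet of generators as a single generator, the distributive normal form shows that every element of $L$, just as of $\ZarR$, is a finite join of $\DR(a)$'s. It therefore suffices to check: if $\wi u\leq\wi{a_1}\vu\cdots\vu\wi{a_n}$ holds in $\ZarR$, then $\DR(u)\leq\DR(a_1)\vu\cdots\vu\DR(a_n)$ is derivable in $L$ from the four relations. By \eqref{eqZarA}, the hypothesis amounts to $u^k=\sum_j r_j a_j$ for some $k\in\NN$ and $r_j\in\gR$. Iterating $\DR(xy)=\DR(x)\vi\DR(y)$ gives $\DR(u^k)=\DR(u)$; iterating $\DR(x+y)\leq\DR(x)\vu\DR(y)$ and then $\DR(r_j a_j)\leq\DR(a_j)$ gives $\DR(u^k)=\DR(\sum_j r_j a_j)\leq\bigvee_j\DR(a_j)$; composing yields the required inequality in $L$.

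The main obstacle I anticipate is the bookkeeping of these iterated derivations inside the purely syntactical generator-and-relations description of $L$: one has to argue by induction on the number of factors and summands, keeping track of the fact that every step uses only the four stated rules. A clean way around this is to present $L$ directly via the entailment relation on $\gR$ determined by \eqref{eqZarA}, invoking Theorem \ref{thEntRel1}, and then to observe that this entailment relation is generated, through reflexivity, monotonicity, and transitivity, by the four relations read as entailments. Uniqueness of $\phi$, and hence of the isomorphism, follows automatically from the universal property of $L$.
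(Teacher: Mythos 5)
Your proof is correct. The paper itself states this proposition without proof (it cites Cederquist--Coquand and Coquand--Lombardi), so there is no internal argument to compare against; your universal-property argument is the standard one and matches what those references do.

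Two minor notational points. First, in the injectivity step, the equality $\DR(u^k)=\DR(u)$ obtained by iterating $\DR(xy)=\DR(x)\vi\DR(y)$ presupposes $k\geq 1$ (for $k=0$ it gives $\DR(1)=1$); this is harmless since radical membership $u\in\sqrt[\gR]{\fa}$ can always be witnessed with $k\geq 1$, but it is worth saying. Second, your closing suggestion—presenting $L$ via the entailment relation of \eqref{eqZarA} and invoking Theorem~\ref{thEntRel1}—is exactly the route the cited references take; it packages the same radical-membership computation, with the cut rule \((T)\) doing the work that your explicit derivation from $u^k=\sum_j r_j a_j$ does by hand. Your first argument is the more self-contained of the two, since the "observe that this entailment relation is generated ... by the four relations" step in the alternative hides essentially the same computation.
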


The construction $\gR\mapsto\ZarR$ yields a functor from the category of commutative rings
to  the category of distributive lattices.
Via this functor the projection $\gR\to\gR/\DR(0_\gR)$ gives an 
isomorphism
$\ZarR\to \Zar(\gR/\DR(0_\gR))$. We have $\ZarR=\Un$ if and only if  $1_\gR=0_\gR$.

In classical mathematics there is an easy proof that the dimension of the distributive lattice $\ZarR$ is the usual Krull dimension of the ring $\gR$.

\smallskip The Zariski lattice of a ring is the paradigmatic example of a distributive  lattice generated by a dynamical algebraic structure. We explain the general method in the following paragraph.

\subsection{Dynamical algebraic structures}

References: \citealt*{CLR01,Lom98,Lom06,Lom2020,BC2005,Coq2005}. The (finitary) dynamical algebraic structures are explicitly named in~\citealt{Lom98,Lom06}. In \citealt*{CLR01} they are implicit, but described in the form of their presentations.
They are also implicit in \citealt{Lom02}, and, last but not least, in \citealt*{D5} (D5), which has been an essential inspiration: one can compute safely in the algebraic closure of a discrete field even when it is not possible to construct this algebraic closure.
It is therefore appropriate to consider the algebraic closure as a dynamical algebraic structure à la D5 rather than as a usual algebraic structure: \textsl{lazy evaluation à la D5 provides a constructive semantics for the algebraic closure of a discrete field}.

A more detailed study can be found in \citealt{Lom-tgac} (in progress); 
see also \citealt{LM2022}.

A (finitary) dynamical theory $\sa{T}=(\cL,\cA)$ is a purely computational version, without logic, of a coherent theory. The language $\cL$ is given by a signature; the axioms (elements of $\cA$) are dynamical rules.

A \textsl{dynamical algebraic structure $\gD$ for a dynamical theory \sa{T}} is given by generators and relations:
$\gD=\big((G, R), \sa{T}\big)$. If $(G,R)$ is the positive diagram of a usual algebraic structure $\gR$, we denote this by $\gD=\sa{T}(\gR)$.

Let us consider a dynamical algebraic structure $\gD=\big((G,R),\sa{T}\big)$ for a dynamical theory $\sa{T}=(\cL,\cA)$.
Let $S$ be a set of closed atomic formulas of $\gD$. We define the entailment relation~\(\vdash\) on $S$ associated with $\gD$ as follows for $A_i$ and $B_j\in S$:

\vspace{-.5em}
\begin{equation} \label {eq2}
 A_1,\dots,A_n  \,\vdash B_1,\dots,B_m
   \equidef
    A_1\tsbf{,}\, \dots\tsbf{,}\,  A_n \Vdi{\gD} B_1\MA{\tsbf{ or }}\dots\MA{\tsbf{ or }} B_m\text.
\end{equation}
We denote by $\Zar(\gD,S)$ the distributive lattice generated by this entailment relation.
Intuitively, this lattice is the lattice of truth values of the formulas of $S$ in the dynamical algebraic structure $\gD$.

The  \textsl{(complete) Zariski lattice of a dynamical algebraic structure $\gD$} is defined by taking for~$S$ the set $\operatorname{Clat}(\gD)$ of all closed atomic formulas of~$\gD$. It is denoted by $\Zar(\gD,\sa T)$, or by $\Zar(\gD)$, or by a particular name corresponding to the theory~\sa{T}.

The dual spectral space is called the Zariski \textsl{spectrum of the dynamical algebraic structure~$\gD$}, or it can also be given a special name.

Finally, the \textsl{Krull dimension of $\gD$} is by definition equal to
$\Kdim(\Zar(\gD))$. The definition of $\Vdim(\gR)$ in the second constructive approach to valuative dimension (page~\pageref{subsecSpecVal}) is given according to this scheme.

\section{Three constructive definitions of the valuative dimension}\label{secdival2}

\subsection{Minimal pp-closure of a reduced ring}\label{subsecCqimiared}

We recall here some essential results given in \citealt{CACM}.
If the context is clear, we denote by $a\epr$ the annihilator of the element $a$ in $\gR$. We also use the notation $\fa\epr$ for the annihilator of an ideal~$\fa$.

\begin{lemma}\label{lem20MorRc}
Let $\gR$ be a reduced ring
and $a\in\gR$.
We define
\[
  \gR_{\so{a}}\eqdefi\gR\sur{a\epr}\times \gR\sur{({a\epr})\epr}
\]
and we denote by $\psi_a\colon\gR\to\gR_{\so{a}}$ the canonical homomorphism.
\begin{enumerate}
\item $\psi_a(a)\epr$ is generated by the idempotent $(\ov 0,\wi 1)$;
therefore $\psi_a(a)\epr=(\ov 1,\wi 0)\epr$.
\item $\psi_a$ is injective (we hence identify $\gR$ with a subring of $\gR_{\so{a}}$).
\item Let $\fb$ be an ideal in $\gR_{\so{a}}$; then the ideal $\psi_a^{-1}(\fb\epr)=\fb\epr\cap\gR$ is an annihilator in $\gR$.
\item The ring $\gR_{\so{a}}$ is reduced.
\end{enumerate}
\end{lemma}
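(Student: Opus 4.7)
The plan is to verify all four items by direct annihilator calculations, exploiting reducedness of~$\gR$ systematically to collapse squares (or $n$-th powers) to zero.

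For Item 1, I would unpack what it means for $(\overline{x},\widetilde{y})$ to annihilate $\psi_a(a)=(\overline{a},\widetilde{a})$: namely, $xa\in a\epr$ and $ya\in(a\epr)\epr$. The second condition is automatic since $a\in(a\epr)\epr$; the first reads $xa^2=0$, which under reducedness forces $xa=0$, i.e.\ $\overline{x}=0$. This identifies $\psi_a(a)\epr$ with the principal ideal generated by the idempotent $(\overline{0},\widetilde{1})$, which equally equals $(\overline{1},\widetilde{0})\epr$ as these two elements form a pair of complementary idempotents. Item 2 would then follow immediately: if $\psi_a(x)=0$ then $x\in a\epr\cap(a\epr)\epr$, so $x$ annihilates $a\epr$ and itself lies in $a\epr$, giving $x^2=0$ and thus $x=0$ by reducedness. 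Item 4 reduces factorwise, since a finite product is reduced iff each factor is: if $x^na=0$ then $(xa)^n=a^{n-1}\cdot x^na=0$ gives $xa=0$, and if $x^nw=0$ for every $w\in a\epr$ the same trick yields $(xw)^n=w^{n-1}\cdot x^nw=0$ and hence $xw=0$.

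The main obstacle is Item 3, which is conceptually more subtle because $\fb$ need not be the image of any ideal of $\gR$, so one cannot merely pull it back. My plan is to unpack $\Ann_\gR(b)$ element by element. For $b=(\overline{u},\widetilde{v})\in\fb$, the condition $xb=0$ in $\gR_{\so{a}}$ reads $xua=0$ together with $xvw=0$ for every $w\in a\epr$, so
\[
\Ann_\gR(b)=\bigl(\gen{ua}+v\cdot a\epr\bigr)\epr,
\]
an honest annihilator in $\gR$. Intersecting over $b\in\fb$ and using the identity $\bigcap_i J_i\epr=\bigl(\sum_i J_i\bigr)\epr$ should then yield
\[
\psi_a^{-1}(\fb\epr)=\Bigl(\sum\nolimits_{(\overline{u},\widetilde{v})\in\fb}\bigl(\gen{ua}+v\cdot a\epr\bigr)\Bigr)\epr,
\]
exhibiting the preimage as the annihilator of an ideal of $\gR$, as required.
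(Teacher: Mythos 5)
Your proof is correct. The paper states Lemma~\ref{lem20MorRc} without proof, referring the reader to \citealt{CACM}, so there is no in-paper argument to compare against; but the direct annihilator computation you carry out, using reducedness of~$\gR$ to kill nilpotents, is the natural approach and matches the style of the cited book.

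All four items check out: in Item~1, $\widetilde{ya}=0$ automatically because $a\in(a\epr)\epr$, and $xa^2=0$ gives $(xa)^2=0$ hence $xa=0$; Item~2 follows from $x^2=0$; Item~4 reduces factorwise with the squaring-type trick you describe. For Item~3 the observation that $\Ann_\gR(b)=\gen{ua}\epr\cap(v\cdot a\epr)\epr=(\gen{ua}+v\cdot a\epr)\epr$ for $b=(\ov u,\wi v)$, combined with $\bigcap_i J_i\epr=(\sum_i J_i)\epr$, exhibits $\fb\epr\cap\gR$ as an annihilator. (One might note for completeness that this description is independent of the chosen lifts $u,v$, since replacing $u$ by $u+s$ with $s\in a\epr$ leaves $ua$ unchanged, and replacing $v$ by $v+t$ with $t\in(a\epr)\epr$ leaves $v\cdot a\epr$ unchanged — but this is not strictly needed since the final intersection is taken over all $b\in\fb$.)
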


\begin{lemma}\label{lem3MorRc}
Let $\gR$ be reduced
and $a,b\in\gR$. Then, with the notation of Lemma~\ref{lem20MorRc},
 the two rings $(\gR_{\so{a}})_{\so{b}}$ and $(\gR_{\so{b}})_{\so{a}}$ are canonically isomorphic.
\end{lemma}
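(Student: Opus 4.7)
The plan is to unfold both $(\gR_{\so{a}})_{\so{b}}$ and $(\gR_{\so{b}})_{\so{a}}$ as products of four quotients of $\gR$ and then to identify each factor as $\gR/J$ for some ideal $J\subseteq\gR$ expressible symmetrically in $a,b$; the resulting bijection between quadruples of factors gives the canonical isomorphism, namely the unique one extending the identity of $\gR$ (both iterated rings receive $\gR$ as a subring by iterating Item~\textsl{2} of Lemma~\ref{lem20MorRc}). In a product $A\times B$, the annihilator of $(c,d)$ is $c^\perp\times d^\perp$, and hence the biannihilator is $c^{\perp\perp}\times d^{\perp\perp}$. Starting from $\gR_{\so{a}}=\gR/a^\perp\times\gR/a^{\perp\perp}$ and writing $(\bar b,\tilde b)$ for the image of $b$, a second application gives
\[
(\gR_{\so{a}})_{\so{b}}\cong\bigl((\gR/a^\perp)/\bar b^\perp\bigr)\times\bigl((\gR/a^{\perp\perp})/\tilde b^\perp\bigr)\times\bigl((\gR/a^\perp)/\bar b^{\perp\perp}\bigr)\times\bigl((\gR/a^{\perp\perp})/\tilde b^{\perp\perp}\bigr),
\]
and the analogous decomposition of $(\gR_{\so{b}})_{\so{a}}$ is obtained by swapping $a$ and $b$.

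Each factor rewrites as $\gR/J$ with $J$ respectively equal to $(ab)^\perp$, $(a^{\perp\perp}:b)$, $((ab)^{\perp\perp}:a)$, and $(a^{\perp\perp}:(a^{\perp\perp}:b))$. The first ideal is already symmetric. Reducedness yields the equalities $((ab)^{\perp\perp}:a)=(b^{\perp\perp}:a)$ and $(a^{\perp\perp}:b)=((ab)^{\perp\perp}:b)$: the inclusions $\supseteq$ follow from $(ab)\subseteq(b)$; for the converses, given $w\in(ab)^\perp$ one has $wa\in b^\perp$, so from $ya\in b^{\perp\perp}$ one derives $ya^2w=0$, whence $(yaw)^2=y^2a^2w^2=0$ and $yaw=0$ by reducedness. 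These identifications match the second and third factors on each side of the desired isomorphism, after the symmetric swap.

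The main obstacle is the fourth factor, for which I claim the symmetric identity
\[
(a^{\perp\perp}:(a^{\perp\perp}:b))=(a^\perp b^\perp)^\perp.
\]
For $\subseteq$, take $w\in b^\perp$: then $wb=0\in a^{\perp\perp}$, so $w\in(a^{\perp\perp}:b)$, giving $yw\in a^{\perp\perp}$ and $ywz=0$ for every $z\in a^\perp$. For $\supseteq$, assume $y(a^\perp b^\perp)=0$ and $xb\in a^{\perp\perp}$; for $z\in a^\perp$, $xbz=0$ shows $xz\in b^\perp$, so $yz\cdot xz=yxz^2=0$, giving $yxz=0$ by reducedness; thus $yx\in a^{\perp\perp}$. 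Applying the same identity with $a$ and $b$ swapped shows the fourth factor of $(\gR_{\so{b}})_{\so{a}}$ is also $\gR/(a^\perp b^\perp)^\perp$, completing the symmetric matching and yielding the canonical isomorphism.
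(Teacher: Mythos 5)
Your proof is correct. The unfolding of both iterated rings as products of four quotients is right (annihilator and biannihilator in $A\times B$ decompose componentwise, and $(A\times B)/(\fa\times\fb)\simeq A/\fa\times B/\fb$), the identification of the four factors as $\gR/J$ with $J=(ab)^\perp$, $(a^{\perp\perp}:b)$, $((ab)^{\perp\perp}:a)$, $(a^{\perp\perp}:(a^{\perp\perp}:b))$ is right, and the two reducedness arguments establishing $((ab)^{\perp\perp}:a)=(b^{\perp\perp}:a)$, $(a^{\perp\perp}:b)=((ab)^{\perp\perp}:b)$, and $(a^{\perp\perp}:(a^{\perp\perp}:b))=(a^\perp b^\perp)^\perp$ are sound. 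One wording slip: you write that ``the inclusions $\supseteq$ follow from $(ab)\subseteq(b)$'', but for $((ab)^{\perp\perp}:a)=(b^{\perp\perp}:a)$ the inclusion that comes for free from $(ab)\subseteq(b)$ is $\subseteq$, and for $(a^{\perp\perp}:b)=((ab)^{\perp\perp}:b)$ the free inclusion is $\supseteq$ but comes from $(ab)\subseteq(a)$; the reducedness argument you give does prove the remaining nontrivial inclusions, so only the labels are swapped, not the content.

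The paper does not print a proof of Lemma~\ref{lem3MorRc}; it is a recalled result with the proof delegated to \citealt{CACM}. Your argument amounts to rediscovering, by iterating the single-element construction, that $(\gR_{\so a})_{\so b}$ is the product of $\gR/\fa_I$ over $I\in\cP_2$ with
\[
\fa_\emptyset=(ab)^\perp,\quad \fa_{\{1\}}=(a^\perp b)^\perp=(a^{\perp\perp}:b),\quad \fa_{\{2\}}=(ab^\perp)^\perp=(b^{\perp\perp}:a),\quad \fa_{\{1,2\}}=(a^\perp b^\perp)^\perp,
\]
which is precisely the $n=2$ case of the formula displayed in Lemma~\ref{lem4MorRc}, and this family is visibly stable under the exchange $a\leftrightarrow b$. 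So your route is essentially the one underlying the source, obtained by direct computation rather than by invoking the general formula; what it adds is the explicit, ideal-by-ideal verification of the three symmetrising identities, which is exactly where reducedness of $\gR$ enters.
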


\rdb
\noi{\it Note. } The case where $ab=0$ is typical: when we meet it, we should like to
split the ring into components where things are “{clear}”.
The previous construction then gives the three components
\[
\gR\sur{(ab\epr)\epr}, \; \gR\sur{(a\epr b)\epr}, \;  \gR\sur{(a\epr b\epr)\epr}.
\]
In the first one, $a$ is regular and $b=0$; in the second one,
$b$ is regular and $a=0$; in the third one, $a=b=0$.
\eoe

\begin{lemma}\label{lem2qi}
If $\gR\subseteq\gC$ with $\gC$ a pp-ring, the smallest pp-subring of~$\gC$ containing~$\gR$ is equal to $\gR[(e_a)_{a\in\gR}]$,
where $e_a$ is the idempotent of $\gC$ such that $\Ann_\gC(a)=\gen{1-e_a}_\gC$. More generally, if $\gR\subseteq\gB$
with $\gB$ reduced and if any element $a$ of $\gR$ has an annihilator in $\gB$
generated by an idempotent $1-e_a$,
then $\gR[(e_a)_{a\in\gR}]$ is a pp-subring of~$\gB$.
\end{lemma}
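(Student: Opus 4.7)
The plan is to prove directly the more general second assertion; the first then follows by specializing to $\gB=\gC$ and adding a short minimality argument. Set $\gR' := \gR[(e_a)_{a\in\gR}]$. I need to exhibit, for each $x\in\gR'$, an idempotent $e_x\in\gR'$ such that $\Ann_{\gR'}(x) = \gen{1 - e_x}_{\gR'}$.

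The structural step is to put $x$ in a canonical form. Any $x\in\gR'$ is a polynomial in finitely many $e_{a_1},\dots,e_{a_n}$ with coefficients in $\gR$, and because each $e_{a_i}$ is idempotent, $x$ rewrites as
\[
  x \;=\; \sum_{\epsilon\in\{0,1\}^n} r_\epsilon\,f_\epsilon, \quad f_\epsilon := \prod_{i=1}^n e_{a_i}^{\epsilon_i}(1-e_{a_i})^{1-\epsilon_i}, \quad r_\epsilon\in\gR,
\]
where the $2^n$ idempotents $f_\epsilon$ form a fundamental system of pairwise orthogonal idempotents summing to~$1$ (the splitting attached to $e_{a_1},\dots,e_{a_n}$, as in Lemma~\ref{thScindageQi}). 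I then propose the candidate $e_x := \sum_\epsilon e_{r_\epsilon}\, f_\epsilon$, which lies in $\gR'$ and is itself idempotent because the summands $e_{r_\epsilon} f_\epsilon$ are pairwise orthogonal.

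To check $\Ann_\gB(x) = \gen{1 - e_x}_\gB$, I exploit the direct-sum decomposition $\gB = \bigoplus_\epsilon f_\epsilon \gB$ arising from $\sum_\epsilon f_\epsilon = 1$ and $f_\epsilon f_{\epsilon'} = \delta_{\epsilon,\epsilon'} f_\epsilon$. For $w\in\gB$, a direct expansion gives $xw = \sum_\epsilon r_\epsilon w f_\epsilon$ with summands in the distinct orthogonal components, so $xw=0$ is equivalent to the system $r_\epsilon w f_\epsilon = 0$ for all $\epsilon$. The hypothesis $\Ann_\gB(r_\epsilon) = \gen{1 - e_{r_\epsilon}}_\gB$ produces the key equivalence
\[
  r_\epsilon w f_\epsilon = 0 \;\Longleftrightarrow\; e_{r_\epsilon} w f_\epsilon = 0,
\]
the forward direction coming from $w f_\epsilon \in \gen{1 - e_{r_\epsilon}}_\gB$ annihilating $e_{r_\epsilon}$, and the backward one from multiplying by $r_\epsilon$ and using $r_\epsilon e_{r_\epsilon} = r_\epsilon$. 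Summing over $\epsilon$ converts this to $we_x = 0$, i.e.\ $w = (1 - e_x)w$. Intersecting with $\gR'$ then gives $\Ann_{\gR'}(x) = \gen{1 - e_x}_{\gR'}$, so $\gR'$ is a pp-ring.

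For the minimality claim in the first part, I invoke the equational characterization~\eqref{eqaqis}: a pp-ring carries a canonical unary operation $a \mapsto \ci{a}$, and in $\gC$ this is precisely $a \mapsto e_a$. A pp-subring of $\gC$ is thus a subring closed under the restriction of this operation, hence any such subring containing $\gR$ must contain every $e_a$ for $a\in\gR$, and therefore contain $\gR'$. The main delicate point of the whole argument is the orthogonal-decomposition step: one has to verify that the local equivalence $r_\epsilon w f_\epsilon = 0 \Leftrightarrow e_{r_\epsilon} w f_\epsilon = 0$ assembles coherently over all $\epsilon$, so that the global condition $xw = 0$ gets translated exactly into $w e_x = 0$, and so that the same idempotent $e_x$ correctly generates the annihilator both on the $\gB$-side and after restriction to~$\gR'$.
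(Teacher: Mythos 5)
The paper states this lemma as recalled material from \citealt{CACM} and gives no proof, so there is nothing to compare against directly; I have therefore checked your argument on its own merits, and I believe it is correct.

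Your structural reduction is the right one: any $x\in\gR'=\gR[(e_a)_{a\in\gR}]$ is a polynomial in finitely many $e_{a_i}$, and multilinearising via the fundamental system of orthogonal idempotents $f_\epsilon=\prod_i e_{a_i}^{\epsilon_i}(1-e_{a_i})^{1-\epsilon_i}$ (these sum to $1$ and split $\gB$ into a product) yields $x=\sum_\epsilon r_\epsilon f_\epsilon$ with $r_\epsilon\in\gR$. The local equivalence $r_\epsilon wf_\epsilon=0\iff e_{r_\epsilon}wf_\epsilon=0$, proved from $\Ann_\gB(r_\epsilon)=\gen{1-e_{r_\epsilon}}$ and $r_\epsilon e_{r_\epsilon}=r_\epsilon$, globalises correctly because the components $f_\epsilon\gB$ are independent, so that $\Ann_\gB(x)=\gen{1-e_x}_\gB$ with $e_x=\sum_\epsilon e_{r_\epsilon}f_\epsilon\in\gR'$; and the passage to $\Ann_{\gR'}(x)=\gen{1-e_x}_{\gR'}$ goes through because $w\in\gen{1-e_x}_\gB$ forces $w=(1-e_x)w$, so a generator of the $\gB$-ideal can be found inside $\gR'$. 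This is exactly in the spirit of the finite-stage description of $\Rmin$ in Lemma~\ref{lem4MorRc}. Note that the chosen $e_x$ a priori depends on the representation of $x$, but since the idempotent generating an annihilator in a reduced ring is unique, this is harmless.

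One caveat on the minimality step: your phrase \emph{``a pp-subring of $\gC$ is thus a subring closed under the restriction of this operation''} treats as an inference what is really a definitional point. A unital subring $\gS\subseteq\gC$ can be a pp-ring in its own right while its intrinsic $\circ$-operation differs from the restriction of $\gC$'s (for instance $\gS=\{(f,f(0)):f\in k[X]\}\cong k[X]$ inside $\gC=k[X]\times k$, where $\ci{X}^{\gS}=1_\gS=(1,1)$ but $e_{(X,0)}^{\gC}=(1,0)\notin\gS$). The statement you want, and the one that makes the lemma true, is that ``pp-subring'' here means \emph{subalgebra for the $\circ$-enriched signature}, i.e.\ a subring closed under $\gC$'s $\circ$; with that reading the minimality claim is immediate, and your proof of the second part shows $\gR'$ is indeed such a subalgebra since the $e_x$ you construct coincides with $\gC$'s $\ci{x}$ by uniqueness of the annihilating idempotent in $\gC$. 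It would be worth spelling this out rather than presenting it as a consequence of the equational characterisation.
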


 Let us denote by $\Rred=\gR/\!\sqrt[\gR]{\gen{0}}$ the reduced ring generated by $\gR$.

\begin{thdef}[minimal pp-closure]\label{thAmin} ~ 
\\
Let $\gR$ be a reduced ring.
We can define a ring $\Rmin$ as a filtered colimit
 by iterating the basic construction of
 replacing~$\gE$ (the ring “{in progress}”, which contains $\gR$) by
 \[\gE_{\so{a}}\eqdefi\gE\sur{a\epr}\times \gE\sur{({a\epr})\epr}=\gE\sur{\Ann_\gE (a) }\times \gE\sur{\Ann_\gE(\Ann_\gE (a))}\]
 when $a$ runs through $\gR$.
\begin{enumerate}
\item This ring $\Rmin$ is a pp-ring, it contains $\gR$ and it is integral over $\gR$.
\item For any $x\in\Rmin$,
$x\epr\cap\gR$ is an annihilating ideal in $\gR$.
\end{enumerate}
This ring $\Rmin$ is called the \textup{minimal pp-closure of $\gR$}.
\\
When $\gR$ is not necessarily reduced,
  one takes $\gR\qim\!\eqdefi (\Rred)\qim$.
\end{thdef}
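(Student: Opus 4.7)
The construction presents $\Rmin$ as the filtered colimit of the rings $\gR_F$ obtained by applying the basic construction $\gE\mapsto\gE_{\so a}$ to each $a$ in a finite subset $F\subseteq\gR$; Lemma~\ref{lem3MorRc} ensures the outcome is independent of the order of the splittings, and Lemma~\ref{lem20MorRc}(2,4) ensures that every transition $\gR_F\hookrightarrow\gR_{F'}$ is an injection of reduced rings. Three of the conclusions of item~1 follow almost immediately. The injection $\gR\hookrightarrow\Rmin$ is clear. Each step adjoins only the idempotent $e_a=(\ov 1,\wi 0)$, which satisfies $X^2-X=0$, so every $\gR_F$, and hence the colimit $\Rmin$, is integral over $\gR$. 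Reducedness is preserved under any filtered colimit of injections of reduced rings.

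The real work is to show that $\Rmin$ is a pp-ring. Given $x\in\Rmin$ we may enlarge $F$ so that $x\in F$; Lemma~\ref{lem20MorRc}(1) then produces an idempotent $e_x\in\gR_F$ with $\Ann_{\gR_F}(x)=(1-e_x)\gR_F$. The plan is to show that this idempotent continues to generate the annihilator of $x$ in every larger ring $\gR_{F'}$; by induction along the construction tower, it suffices to verify that if $\Ann_\gE(x)=(1-e_x)\gE$ for an idempotent $e_x\in\gE$, then $\Ann_{\gE_{\so b}}(x)=(1-e_x)\gE_{\so b}$ for every $b\in\gE$. Under the decomposition $\gE_{\so b}=\gE/b\epr\times\gE/(b\epr)\epr$ the annihilator of $x$ splits as a product, and the calculation I would carry out for the first factor is: $u\ov x=0$ in $\gE/b\epr$ means $uxb=0$, so $ub\in\Ann_\gE(x)=(1-e_x)\gE$; hence $ube_x=0$, i.e.\ $ue_x\in b\epr$, so $\ov u\ov{e_x}=0$ in $\gE/b\epr$, and therefore $\ov u=\ov u(1-\ov{e_x})\in(1-\ov{e_x})(\gE/b\epr)$; the reverse inclusion is immediate and the second factor is symmetric. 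Passing to the colimit yields $\Ann_\Rmin(x)=(1-e_x)\Rmin$, establishing the pp-ring property. This propagation of the annihilator-idempotent through a new splitting is the main obstacle of the proof.

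For item~2, pick $x\in\Rmin$ and a finite $F$ with $x\in\gR_F$. Injectivity of $\gR\hookrightarrow\gR_F\hookrightarrow\Rmin$ gives $x\epr\cap\gR=\Ann_{\gR_F}(x)\cap\gR$. The ideal $\Ann_{\gR_F}(x)$ is tautologically an annihilator ideal in $\gR_F$, so iterating Lemma~\ref{lem20MorRc}(3) down the finite tower $\gR=\gR_\emptyset\subseteq\gR_{\{a_1\}}\subseteq\dots\subseteq\gR_F$ shows at each stage that the intersection with the next smaller ring remains an annihilator ideal, and at the bottom one concludes that $x\epr\cap\gR$ is an annihilator ideal in $\gR$.
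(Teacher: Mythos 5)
Your propagation lemma (if $\Ann_\gE(x)=\gen{1-e_x}$ with $e_x$ idempotent, then the same idempotent generates $\Ann_{\gE_{\so b}}(x)$, and hence $\Ann$ at every later stage) is correct and is genuinely the computational heart of the argument. But the way you deploy it leaves a gap: you write ``given $x\in\Rmin$ we may enlarge $F$ so that $x\in F$'', which is only possible when $x\in\gR$, since the construction splits only at elements of $\gR$. A general $x\in\Rmin$ is a polynomial in the $e_a$'s with coefficients in $\gR$, and at no finite stage $\gR_F$ need its annihilator be generated by an idempotent---indeed the $\gR_F$ are not pp-rings in general (already $\gR_\emptyset=\gR$ need not be one). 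So your argument establishes that $\Ann_\Rmin(a)$ is idempotent-generated for all $a\in\gR$, but not for arbitrary $x\in\Rmin$, which is what pp-ness requires.

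The missing bridge is Lemma~\ref{lem2qi}, which the paper places immediately before this theorem for exactly this purpose: if $\gR\subseteq\gB$ with $\gB$ reduced and each $a\in\gR$ has $\Ann_\gB(a)=\gen{1-e_a}$ for an idempotent $e_a$, then $\gR[(e_a)_{a\in\gR}]$ is a pp-subring of~$\gB$. Your propagation lemma, together with reducedness of $\Rmin$ (which you correctly note follows from Lemma~\ref{lem20MorRc}(4) and the filtered colimit), verifies exactly the hypotheses of Lemma~\ref{lem2qi} with $\gB=\Rmin$. Combined with the observation that each basic step $\gE\mapsto\gE_{\so a}$ just adjoins the single idempotent $e_a=(\ov 1,\wi 0)$ (so $\Rmin=\gR[(e_a)_{a\in\gR}]$, which you already use to get integrality), this closes the argument. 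Your treatment of item~2 via iterated application of Lemma~\ref{lem20MorRc}(3) down the tower is fine.
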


We denote by $\cP_n$ the set of finite subsets of $\so{1,\dots,n}$. Here is a description of each ring
obtained at a finite level of the $\Rmin$ construction.

\begin{lemma}\label{lem4MorRc}
Let $\gR$ be a reduced ring and $(\ua) = (\an)$ a sequence of~$n$ elements
of~$\gR$.  For $I\in\cP_n$, we denote by $\fa_I$ the ideal
\[\fa_I = \big(\Prod_{i\in I} \gen{a_i}\epr \Prod_{j\notin I} a_j\big)\epr
= \big(\gen{a_i\mathrel; i \in I}\epr \Prod_{j\notin I} a_j\big)\epr
.\]
Then $\Rmin$ contains the following ring: the product 
\[\gR_{\so\ua} = \Prod_{I\in\cP_n} \gR\sur{\fa_I}\]
of $2^n$ quotient rings of~$\gR$ (some possibly zero).
 \end{lemma}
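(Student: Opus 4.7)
I would proceed by induction on~$n$. The case $n=0$ is trivial (one factor, $\fa_\emptyset=0$, $\gR_{\so{\ua}}=\gR$) and the case $n=1$ is exactly Lemma~\ref{lem20MorRc} in the new notation ($\fa_\emptyset=a_1\epr$, $\fa_{\{1\}}=(a_1\epr)\epr$). The membership of $\gR_{\so{\ua}}$ in $\Rmin$ follows from the filtered-colimit construction of Theorem~\ref{thAmin}, together with Lemma~\ref{lem3MorRc} which makes the iterated basic construction independent of the order of the~$a_i$.

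For the induction step, I would write $\gR_{\so{\ua}}=(\gR_{\so{(a_1,\ldots,a_{n-1})}})_{\so{a_n}}$ and plug in the induction hypothesis $\gR_{\so{(a_1,\ldots,a_{n-1})}}\cong\prod_{J\in\cP_{n-1}}\gR/\fa'_J$. Since (double) annihilators in a product of rings are formed componentwise, the next $\so{a_n}$ construction splits as
\[
\gR_{\so{\ua}}\;\cong\;\prod_{J\in\cP_{n-1}}(\gR/\fa'_J)_{\so{\bar a_n}},
\]
a product of $2^n$ reduced quotients of~$\gR$. Setting $p_J=\prod_{j\in\{1,\ldots,n-1\}\setminus J}a_j$ and $\fc_J=\gen{a_i : i\in J}\epr\,p_J$, so that $\fa'_J=\fc_J\epr$, the pullback to~$\gR$ of $\Ann_{\gR/\fa'_J}(\bar a_n)$ is $\{x:xa_n\in\fc_J\epr\}=(a_n\fc_J)\epr$; viewing~$J$ now as a subset of $\{1,\ldots,n\}$ with $n\notin J$ (so that~$a_n$ joins the missing-indices product), this ideal is exactly~$\fa_J$. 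The analogous pullback for the double annihilator yields the ideal $(\fa_J\fc_J)\epr$, which must be shown to equal~$\fa_{J\cup\{n\}}$; this is the core technical step.

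The main obstacle is precisely the identity
\[
\bigl((a_n\fc_J)\epr\fc_J\bigr)\epr\;=\;\bigl((\gen{a_i : i\in J}\epr\cap a_n\epr)\,p_J\bigr)\epr.
\]
Both inclusions rely crucially on $\gR$ being reduced, through the implication $yz^2p^2=0\Rightarrow(yzp)^2=0\Rightarrow yzp=0$, which lets one move between single and double perpendiculars. For~$\supseteq$, taking $y\in\fa_{J\cup\{n\}}$, $z\in(a_n\fc_J)\epr$ and $d\in\gen{a_i : i\in J}\epr$, one checks directly that $zdp_J\in a_n\epr\cap\gen{a_i : i\in J}\epr$, so $yzdp_J\cdot p_J=0$ by hypothesis, whence $yzdp_J=0$ by reducedness. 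For~$\subseteq$, taking $y\in(\fa_J\fc_J)\epr$ and $d\in a_n\epr\cap\gen{a_i : i\in J}\epr$, the element~$d$ already belongs to $(a_n\fc_J)\epr$, so $yd^2p_J=0$, and reducedness again yields $ydp_J=0$. Once this identity is in hand, the $2^n$ sub-factors become $\gR/\fa_I$ with $I=J$ when $n\notin I$ and $I=J\cup\{n\}$ when $n\in I$, which covers all of~$\cP_n$ and closes the induction.
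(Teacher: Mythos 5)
Your induction is correct, and it is the natural way to establish this lemma (the paper merely recalls it from Lombardi–Quitté without proof, so there is no in-text argument to compare against). The three ingredients you assemble — (a) the iterated construction lands inside $\Rmin$ by Theorem~\ref{thAmin} and is order-independent by Lemma~\ref{lem3MorRc}; (b) annihilators, and hence the $\so{a}$ construction, distribute over finite products componentwise; and (c) the identity $\bigl((a_n\fc_J)\epr\fc_J\bigr)\epr = \bigl((\gen{a_i : i\in J}\epr\cap a_n\epr)\,p_J\bigr)\epr$ — do the work, and your reducedness trick is exactly right: from $u\,p^2=0$ one gets $(up)^2 = u\cdot up^2 = 0$, hence $up=0$. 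Both inclusions of the key identity check out as you sketched, with $zdp_J\in a_n\epr\cap\gen{a_i:i\in J}\epr$ for the one direction and $d\in(a_n\fc_J)\epr$ for the other.

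One small point worth making explicit when you write this up: the pullback to $\gR$ of the double annihilator $\Ann_{\gS_J}\bigl(\Ann_{\gS_J}(\bar a_n)\bigr)$ is $\{x : x\cdot(a_n\fc_J)\epr\subseteq\fa'_J\}$, and since $\fa'_J=\fc_J\epr$, this is $\bigl((a_n\fc_J)\epr\,\fc_J\bigr)\epr$ — you use this identification implicitly but it deserves a line. Also, the second displayed equality in the statement of the lemma, $\bigl(\prod_{i\in I}\gen{a_i}\epr\prod_{j\notin I}a_j\bigr)\epr = \bigl(\gen{a_i;i\in I}\epr\prod_{j\notin I}a_j\bigr)\epr$, is itself another instance of the same reduced-ring squaring argument (both sides have the same radical and annihilators are radical in a reduced ring), and is tacitly used when you rewrite $\fc_J$ as $\gen{a_i:i\in J}\epr\,p_J$ rather than $\prod_{i\in J}\gen{a_i}\epr\,p_J$; acknowledging this would make the argument fully self-contained.
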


We denote by $\BB(\gR)$ the Boolean algebra of idempotents of the ring $\gR$.

\begin{lemma}\label{fact2Amin}\begin{enumerate}\item[]
\item Let $\gR$ be a pp-ring.
\begin{enumerate}
\item $\Rmin=\gR$.
\item $\RX$ is a pp-ring, and $\BB(\gR)=\BB(\RX)$.
\end{enumerate}

\item For any ring $\gR$ we have a canonical isomorphism
\[\Rmin[\Xn]\simeq(\RXn)\qim.\]
\end{enumerate}
\end{lemma}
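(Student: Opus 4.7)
\emph{Part 1(a).} The plan is to observe that a pp-ring is automatically reduced (if $a^2=0$ then $a\in\Ann(a)=\gen{1-e_a}$ forces $e_a a=0$; combined with the defining identity $e_a a=a$, this yields $a=0$), hence $\Rred=\gR$. Then, for each $a\in\gR$, $\Ann(a)=\gen{1-e_a}$ and $\Ann(\Ann(a))=\gen{e_a}$, whence
\[
\gR_{\so{a}} \;=\; \gR/\gen{1-e_a}\times\gR/\gen{e_a} \;\simeq\; e_a\gR\times(1-e_a)\gR \;\simeq\; \gR,
\]
so every elementary step of the colimit defining $\Rmin$ is an isomorphism and the colimit is $\gR$ itself.

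\emph{Part 1(b).} The key input is that reduced rings are Armendariz: $fg=0$ in $\RX$ forces every pairwise product of coefficients to vanish. Consequently, for $f=\sum_i a_iX^i\in\RX$,
\[
\Ann_{\RX}(f) \;=\; \Big(\bigcap_i\Ann_\gR(a_i)\Big)\cdot\RX \;=\; \Big\langle\prod_i(1-e_{a_i})\Big\rangle,
\]
generated by an idempotent of $\gR$; hence $\RX$ is a pp-ring. For the equality $\BB(\gR)=\BB(\RX)$, any idempotent $e=\sum_{i\geq 0}a_iX^i$ in the reduced ring $\RX$ has top coefficient $a_n$ ($n>0$) satisfying $a_n^2=0$, so $a_n=0$; induction on the degree gives $e=a_0\in\gR$.

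\emph{Part 2.} I would first reduce to $\gR$ reduced: the nilradical of $\RXn$ is the extension of that of $\gR$, so $(\RXn)\Red\simeq\Rred[\Xn]$, giving $(\RXn)\qim=(\Rred[\Xn])\qim$, while $\Rmin[\Xn]=((\Rred)\qim)[\Xn]$. Assume henceforth $\gR$ reduced and set $\gS=\Rmin$. Iterating Part 1(b), $\gS[\Xn]$ is a pp-ring containing $\RXn$. For $f=\sum_\alpha a_\alpha X^\alpha\in\RXn$, the iterated Armendariz computation of Part 1(b) performed inside $\gS[\Xn]$ yields an idempotent
\[
\vep_f \;=\; 1-\prod_\alpha(1-e_{a_\alpha}) \;\in\;\gS
\]
with $\Ann_{\gS[\Xn]}(f)=\gen{1-\vep_f}$. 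By Lemma~\ref{lem2qi}, the smallest pp-subring of $\gS[\Xn]$ containing $\RXn$ equals $\RXn[(\vep_f)_{f\in\RXn}]$. Since $\vep_a=e_a$ for $a\in\gR$ and since an analogous splitting computation applied to $\gR\subseteq\Rmin$ gives $\gS=\gR[(e_a)_{a\in\gR}]$, one obtains
\[
\RXn[(\vep_f)_{f\in\RXn}] \;=\; \RXn[(e_a)_{a\in\gR}] \;=\; \gS[\Xn].
\]
Thus $\gS[\Xn]$ is itself the minimal pp-subring of $\gS[\Xn]$ containing $\RXn$; comparing with the construction of Theorem~\ref{thAmin} identifies $\gS[\Xn]$ canonically with $(\RXn)\qim$.

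\emph{The main obstacle} will be tying the iterated-splitting construction of $\qim$ to the subring description of the pp-closure furnished by Lemma~\ref{lem2qi}. Concretely, one needs to check that whenever $\gA\hookrightarrow\gC$ is an embedding into a pp-ring, each basic step $\gE\leadsto\gE_{\so{a}}$ is canonically realised as a subring of $\gC$ (its two factors coming from localising at $e_a$ and $1-e_a$), so the whole filtered colimit lands inside $\gC$ as the minimal pp-subring described in Lemma~\ref{lem2qi}. Once this functoriality is in place, matching $\Rmin[\Xn]$ with $(\RXn)\qim$ reduces to the explicit Armendariz computation above.
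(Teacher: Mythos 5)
Your treatment of Part 1 is correct: the Armendariz-based computation of $\Ann_{\RX}(f)$ and the degree argument for idempotents are the standard ones, and the observation that every basic step $\gR\leadsto\gR_{\so a}$ is an isomorphism when $\gR$ is a pp-ring settles 1(a).

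For Part 2, the reduction to $\gR$ reduced and the computation of $\Ann_{\gS[\Xn]}(f)$ are fine, but the final identification of $\gS[\Xn]$ with $(\RXn)\qim$ hangs on the step you flag as ``the main obstacle'', and that step as you formulate it is false, not merely unverified. You propose to check that whenever $\gA\hookrightarrow\gC$ is an embedding into a pp-ring, each basic step $\gE\leadsto\gE_{\so a}$ realises canonically as a subring of $\gC$, so that the colimit $\gA\qim$ lands inside $\gC$ as the smallest pp-subring containing $\gA$. This fails in general: take $\gA=\ZZ$ and $\gC=\ZZ\times(\ZZ/3\ZZ)$ with $n\mapsto(n,\bar n)$. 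Here $\gA\qim=\ZZ$, since an integral domain is already a pp-ring, yet the smallest pp-subring of $\gC$ containing $\gA$ is all of $\gC$: indeed $\Ann_\gC\bigl((3,\bar0)\bigr)=0\times\ZZ/3\ZZ=\gen{(0,\bar1)}$ forces the idempotent $e_3=(1,\bar0)$ to be adjoined, and $\ZZ[(1,\bar0)]=\gC$. So the smallest pp-subring of a pp-overring is not an intrinsic invariant of $\gA$; the colimit construction and the subring description only coincide under an extra hypothesis on the embedding — that it respects annihilators \emph{and} double annihilators of the elements occurring in the iteration. For the specific embedding $\RXn\hookrightarrow\Rmin[\Xn]$ this hypothesis does hold, and its verification, using the Armendariz formula $\Ann_{\RXn}(f)=\bigl(\bigcap_\alpha\Ann_\gR(a_\alpha)\bigr)\RXn$ together with Theorem~\ref{thAmin}(2) for $\gR\hookrightarrow\Rmin$, is exactly the missing work. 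Likewise, the identity $\gS=\gR[(e_a)_{a\in\gR}]$ is not what Lemma~\ref{lem2qi} gives you (that lemma only says $\gR[(e_a)_{a\in\gR}]$ is the smallest pp-subring of $\Rmin$ containing $\gR$, not that it exhausts $\Rmin$); to prove the equality you must trace the finite stages of the colimit via Lemma~\ref{lem4MorRc} and observe that each $\gR_{\so\ua}$ is generated over $\gR$ by the relevant idempotents.
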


Our suggestion is that the proper generalisation of the notion of the field of fractions
of a ring $\gR$
is not the ring $\Frac\gR$ but the reduced zero-dimensional ring $\Frac\Rmin$.

\subsection{First constructive approach to the valuative dimension: \texorpdfstring{$\vdim$}{vdim}}\label{subsecdimvalcons}

In the book \citealt{CACM}, the authors use the notation $\Vdim(\gR)$ instead of $\vdim(\gR)$. In this article, we prefer to reserve this notation for the notion defined by Coquand. So we use $\vdim$ for the definition given in the book.

\smallskip The following definition is constructive because we have a constructive definition of Krull dimension. The subsequent Theorem~\ref{thValDim} is furthermore proved constructively. So, for the case of an integral domain, it provides a constructive proof for the equivalence of Items \textsl{2}, \textsl{3} and \textsl{4} in the classical Theorem~\ref{thclassdimval}.

\begin{definition}[according to {\citealt{CACM}}]\phantomsection\label{defiValdim}
\begin{enumerate}
\item[]
\item If $\gR$ is a pp-ring, the \textsl{valuative dimension}
is defined as follows. If $n\in\NN$ and $\gK=\Frac\gR$, we say that \textsl{the valuative
dimension of $\gR$ is less than or equal to $n$} and we write
$\vdim\gR\leq n$ if for any sequence $(\xm)$ in~$\gK$ we have
$\Kdim \gR[\xm] \leq n$. By convention $\vdim\gR=-1$ if $\gR$ is trivial.
\item In the general case we define “{$\vdim\gR\leq n$}” by “{$\vdim\Rmin\leq n$}”.\footnote{
Consider in this footnote $\Vdim\gR$ as defined in \citealt{Cah90} and recalled in the introduction. \citet{CACM} show that within classical mathematics $\vdim\gR\leq d\iff\Vdim \gR\leq d$. Hence this proves that $\Vdim\gR=\Vdim\Rmin$ within classical mathematics. This is a way to reduce the general case to the integral case without using prime ideals.}
\end{enumerate}
\end{definition}

For $n= 0,-1$, we have $\vdim\gR=n\iff\Kdim\gR =n$.

We note that, as with the Krull dimension, we have not really defined $\vdim\gR$ as an element of $\NN\cup\so\infty$: only the property “{$\vdim\gR\leq n$}” is well defined, constructively, for any integer $n\geq -1$.

\begin{theorem}[{\citealt[Theorem XIII-8.19]{CACM}}]\label{thValDim} 
The following equivalences are valid.
\begin{enumerate}
\item If $n\geq 1$ and $k\geq -1$, then
\begin{equation} \label {eq1thValDim}
\vdim\gR\leq k\iff\vdim\RXn\leq n+k.
\end{equation}
If $\gR$ is nontrivial and $\vdim\gR<\infty$, this means intuitively that we have an equality \[\vdim\RXn=n+\vdim\gR\text.\]
\item\label{thValDim2} If $n\geq 0$, then
\begin{equation} \label {eq2thValDim}
\vdim\gR\leq n\iff \Kdim \RXn \leq 2n.
\end{equation}
%
\item In the case where $\gR$ is a pp-ring, we also have the equivalence (for $n\geq 0$)
\[
  \vdim\gR\leq n\iff \Kdim \Rxn \leq n\text{ for all $x_1,\dots,x_n\in\Frac\gR$.}
\]
\end{enumerate}
\end{theorem}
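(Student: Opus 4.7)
The plan is to reduce to the pp-ring case and then prove the three items in the order \textit{3}, \textit{1}, \textit{2}: Items~\textit{1} and~\textit{2} follow from Item~\textit{3} more naturally than from one another. The reduction works as follows: by Definition~\ref{defiValdim}, $\vdim\gR\leq n$ is equivalent to $\vdim\Rmin\leq n$, and by Lemma~\ref{fact2Amin}\textit{(2)}, $\Rmin[\Xn]\simeq(\RXn)\qim$. Since $\Rmin=(\Rred)\qim$ is integral over $\Rred$ by Theorem~\ref{thAmin}\textit{(1)} and both passages to the reduced ring and to the minimal pp-closure preserve Krull dimension, we get $\Kdim\RXn=\Kdim\Rmin[\Xn]$. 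All three equivalences may therefore be established under the blanket assumption that $\gR$ is a pp-ring.

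For Item~\textit{3} in the pp-ring case, the forward direction is immediate upon specializing the definition of $\vdim$ at $m=n$. For the converse, assuming $\Kdim\gR[x_1,\ldots,x_n]\leq n$ for every choice $x_1,\ldots,x_n\in\Frac\gR$, I show by induction on $m$ that $\Kdim\gR[y_1,\ldots,y_m]\leq n$ for every $y_j\in\Frac\gR$. The inductive step with $m>n$ is the main technical content: given a sequence $(u_0,\ldots,u_n)$ in $\gR[y_1,\ldots,y_m]$, a complementary sequence is constructed by first using the pp-ring splitting lemma (Lemma~\ref{thScindageQi}) to split into components where each $y_j$ is zero or regular, and then invoking the characterization of Krull dimension by complementary sequences (Theorem~\ref{th-dico-trdi-spec-dim1}) in the appropriate sub-extensions $\gR[y_{i_1},\ldots,y_{i_n}]$ of $\gR[y_1,\ldots,y_m]$.

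Item~\textit{1} follows from Item~\textit{3} via the Seidenberg-style lower bound $\Kdim\gA[T_1,\ldots,T_n]\geq n+\Kdim\gA$. For the converse direction, assume $\vdim\RXn\leq n+k$; given $y_1,\ldots,y_k\in\Frac\gR$, the ring identity $\gR[y_1,\ldots,y_k][X_1,\ldots,X_n]=\RXn[y_1,\ldots,y_k]$ combined with the definition of $\vdim\RXn$ gives $\Kdim\gR[y_1,\ldots,y_k][X_1,\ldots,X_n]\leq n+k$, whence $\Kdim\gR[y_1,\ldots,y_k]\leq k$ by the Seidenberg lower bound, and Item~\textit{3} yields $\vdim\gR\leq k$. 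The forward direction is proved by induction on $n$; the base case $n=1$ reduces via Item~\textit{3} to bounding $\Kdim\gR[X][z_1,\ldots,z_{k+1}]$ for $z_j\in\Frac\gR[X]$, which, after writing $z_j=p_j(X)/q_j(X)$ and extracting coefficients into $\Frac\gR$, is reduced to the hypothesis $\vdim\gR\leq k$.

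For Item~\textit{2}, the forward direction follows from Item~\textit{1} with $k=n$ together with the elementary inequality $\Kdim\gS\leq\vdim\gS$ for any pp-ring $\gS$ (obtained by specializing $m=0$ in the definition of $\vdim$). The converse direction is the main obstacle: $\Kdim\RXn\leq 2n\Rightarrow\vdim\gR\leq n$. Via Item~\textit{3} this reduces to showing that every sequence $(u_0,\ldots,u_n)$ in $\gR[x_1,\ldots,x_n]$ admits a complementary sequence, for every $x_i=a_i/b_i\in\Frac\gR$. The approach is to lift each $u_i$ to a polynomial $\tilde u_i\in\RXn$ and apply the hypothesis $\Kdim\RXn\leq 2n$ to the length-$(2n+1)$ sequence obtained by interleaving the $\tilde u_i$ with the elements $b_iX_i-a_i$; under the substitution $X_i\mapsto x_i$, the $b_iX_i-a_i$ vanish, and the complementary-sequence relations in $\Zar(\RXn)$ transfer to $\Zar(\gR[x_1,\ldots,x_n])$. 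A careful application of the transitivity (cut) rule then eliminates the auxiliary witnesses and produces the sought complementary sequence. This intricate entailment-relation manipulation in the Zariski lattice is the technical heart of the converse.
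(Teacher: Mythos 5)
The paper does not prove Theorem~\ref{thValDim}: it cites it as Theorem XIII-8.19 of \citealt{CACM} and uses it as a black box, so there is no in-paper proof to compare against. Judging your proposal on its own terms, the reduction to the pp-ring case via $\Rmin$ and Lemma~\ref{fact2Amin} is sound, and the interleaving argument for the converse of Item~\textsl{2} (lifting $u_0,\dots,u_n$ to $\RXn$, inserting the $b_iX_i-a_i$, pushing the length-$(2n+1)$ complementary sequence through $X_i\mapsto x_i$, and collapsing the zeros by ordinary lattice transitivity, $w_{2k}\wedge u_k\leq w_{2k-1}\leq w_{2k-2}\vee u_{k-1}$, rather than the cut rule) is the right mechanism and can be made rigorous.

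However there is a genuine gap at the foundation of your chain, namely the converse of Item~\textsl{3} for $m>n$, on which your proofs of Items~\textsl{1} and~\textsl{2} both rest. You propose to split $\gR$ into pp-components and then invoke complementary sequences "in the appropriate sub-extensions $\gR[y_{i_1},\ldots,y_{i_n}]$", but this cannot work. In the generic component all the $y_j$ remain regular and nonzero, so the splitting simplifies nothing there; and the elements $u_0,\dots,u_n$ of $\gR[y_1,\dots,y_m]$ are in general not contained in any $n$-generated sub-ring (already $u_0=y_1+\dots+y_m$ defeats the idea). The heart of the theorem is exactly the passage from the hypothesis on $n$-tuples to the conclusion on $(n+1)$-tuples (and hence all $m$-tuples), and your sketch asserts this passage without an argument. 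A secondary gap is the forward direction of Item~\textsl{1}: the base case $n=1$, reduced to bounding $\Kdim\gR[X][z_1,\dots,z_{k+1}]$ for arbitrary $z_j\in\Frac\gR[X]$, is not handled by "extracting coefficients into $\Frac\gR$" — elements such as $1/X$ or $1/(X^2-a)$ live in $\Frac\gR[X]$ but do not reduce to coefficients in $\Frac\gR$; this is essentially the nontrivial statement $\vdim\gR[X]\leq\vdim\gR+1$. Until these two points are repaired, the derivations of Items~\textsl{1} and~\textsl{2} from Item~\textsl{3} do not go through.
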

Item~\textsl{\ref{thValDim2}} is correct within classical mathematics for the valuative dimension $\Vdim$ as defined in \citealt{Cah90}.

\subsection{Second approach: valuative lattice and valuative spectrum, \texorpdfstring{$\Vdim$}{Vdim}}\label{subsecSpecVal}

First of all let us recall that concerning the dimension of a distributive lattice or of the dual spectral space, there are three constructively equivalent approaches. The historically first one is the one defined by \citet{Joy75}, taken up in \citealt{CP2001} and \citealt{Coq2009}. The second one comes from the notion of potential chain of prime ideals of a commutative ring introduced by \citet{Lom06}. The third one is based on the notion of ``boundary'' ideal (or ``boundary'' filter) and gives rise to a definition by induction. The equivalence of these notions is essentially proven in \citealt{CL2003} and treated in great detail in \citealt{CL2001-2018}.

In the case of a subring $\gR$ of a discrete field $\gK$, \citet{Coq2009} defines the valuative lattice $\Val(\gK,\gR)$ as the distributive lattice which translates the valid rules for the predicate $\Vr$ (introduced below) in the dynamical theory $\sa{Val}(\gK,\gR)$ of valuation rings of the field~$\gK$ with subring~$\gR$.\footnote{Without however using as such the language of dynamical theories.}
Finally $\Val(\gR)$ is an abbreviated notation for the lattice $\Val(\Frac\gR,\gR)$.

The dynamical algebraic structure $\sa{Val}(\gK,\gR)$ can be described as the dynamical theory built on the signature
\[(\cdot=0,
\Vr(\cdot);0,1,\cdot+\cdot,-\cdot,\cdot\times\cdot,(a)_{a\in\gK} )
\]
in which the elements of $\gK$ are constants of the theory.\footnote{For the bare dynamical theory, we delete everything concerning $\gR$ and $\gK$.}

First there are the axioms of nontrivial discrete fields over the language of commutative rings.

\DeuxRegles{
\labu $\vd 0=0$
\labu $\,\,x=0\tsbf{,}\, y=0\vd x+y=0$
\labu $\,\,1=0 \vd \Bot $
}
{
\labu $\,\,x=0\vd xy=0$
\labu $\vd x=0\MA{\tsbf{ or }} \Exists y \,xy=1$
\item[\vspace{\fill}]
}

Then we add the diagram of $\gK$:

\DeuxRegles{
\labu $\vd 0_\gK= 0$
\labu $\vd 1_\gK= 1$
}
{
\labu $\vd a+b=c\quad (\text{if } a+b=_\gK c)$
\labu $\vd ab= c\quad (\text{if } ab=_\gK c)$
}

Finally there are the axioms describing the properties of the predicate $\Vr(x)$ which means that $x$ belongs to the potential valuation ring of the field $\gK$.
\DeuxRegles{
\labu $\vd \Vr(a)\quad (\text{if } a\in\gR)$

\labu $\,\,\Vr(x)\tsbf{,}\,\Vr(y)\vd \Vr(x+y)$
}
{
\labu $\,\,\Vr(x)\tsbf{,}\,\Vr(y)\vd \Vr(xy)$
\labu $\,\,xy=1\vd \Vr(x)\MA{\tsbf{ or }}\Vr(y)$
}

The lattice $\Val(\gK,\gR)$ is then the distributive lattice generated by the entailment relation
$\vdash_{\Val,\gK,\gR}$ on $\gK\eti$ defined by the equivalence

\vspace{-1em}
\begin{equation} \label {eq01}
\begin{aligned}
 x_1,\dots,x_n  &\,\,\vdash_{\Val,\gK,\gR}    y_1,\dots,y_m\\
   \equidef\Vr(x_1)\tsbf{,}\, \dots\tsbf{,}\, \Vr(x_n)&\vdi{\sA{Val}(\gK,\gR)} \Vr(y_1)\MA{\tsbf{ or }} \dots\MA{\tsbf{ or }} \Vr(y_m)\text.
 \end{aligned}
\end{equation}

Finally we denote $\Val(\Frac\gR,\gR)$ by $\Val(\gR)$.

In this framework, the valuative dimension of $\gR$, which we shall denote by $\Vdim\gR$, is defined as $\Kdim(\Val(\gR))$.

\citet[Theorem 8]{Coq2009} gives a Valuativstellensatz
in the form of the following equivalence (for $y_i$ and $x_j\in\gK\eti$):
\begin{multline*}
\Vr(x_1)\tsbf{,}\, \dots\tsbf{,}\, \Vr(x_n)\vdi{\sA{Val}(\gK,\gR)} \Vr(y_1)\MA{\tsbf{ or }} \dots\MA{\tsbf{ or }} \Vr(y_m)\\
\iff 1\in \gen{y_1^{-1},\dots,y_m^{-1}}\gR[x_1,\dots,x_n,y_1^{-1},\dots,y_m^{-1}]\text.
\end{multline*}
Chasing the denominators gives the following equivalent formulation:
\begin{equation} \label {eqVr}
y_1^{p_1}\cdots y_m^{p_m} =
\begin{aligned}[t]
  &Q(x_1,\dots,x_n,y_1,\dots,y_m)\text{ with }Q\in\gR[\Xn,\Ym]\\
  &\text{a polynomial whose monomials have a degree}\\
  &\text{in $(\Ym)$ strictly less than $(p_1,\dots,p_m)$.}
\end{aligned}
\end{equation}

What happens if we accept that some $x_j$'s or $y_i$'s are possibly zero? Since the rule $\vd \Vr(0)$ is valid, the rule

\Regles{\labu $\,\,\Vr(x_1)\tsbf{,}\, \dots\tsbf{,}\, \Vr(x_n)\vd \Vr(y_1)\MA{\tsbf{ or }} \dots\MA{\tsbf{ or }} \Vr(y_m)$}

\noindent is always satisfied if one of the $y_i$'s is zero; and if one of the~$x_j$'s is zero, it is equivalent to the same rule where we have deleted the corresponding~$\Vr(x_j)$ to the left of $\vd$.
The same facts can be observed for the Valuativstellensatz expressed in the form \pref{eqVr}: if one of the~$y_i$'s is zero, we take $Q=0$; if one of the $x_j$'s is zero, then it plays no part in \pref{eqVr}.  Thus the Valuativstellensatz in the form \pref{eqVr} is always valid, which avoids reasoning case by case.

The lattice $\Val\gR$ can thus be characterised as the distributive lattice generated by the entailment relation
$\vdash_{\Val\gR}$ on $\gK$ defined by the equivalence
\begin{equation} \label {eqVr2}
 x_1,\dots,x_n  \vdash_{\Val\gR}    y_1,\dots,y_m\equidef
 \begin{aligned}[t]
   &\exists p_1,\dots,p_m\geq0\ \exists Q\in \gR[\uX,\uY]\enskip y_1^{p_1}\cdots y_m^{p_m} =\\
   &Q(x_1,\dots,x_n,y_1,\dots,y_m)\text{, the monomials of $Q$}\\
   &\text{of degree in $\uY$ strictly less than $(p_1,\dots,p_m)$.}
\end{aligned}
\end{equation}

To make the calculations to come more readable, we introduce the predicate \[\Vp(x)\equidef \Exists u (ux=1\tsbf{,}\, \Vr(u)).\]
In other words, we add a predicate $\Vp(x)$ to the signature with the two axioms

\DeuxRegles{
\labu $\,\,ux=1\tsbf{,}\, \Vr(u)\vd \Vp(x)$
}
{
\labu $\,\,\Vp(x)\vd \Exists u \;(ux=1\tsbf{,}\, \Vr(u)) $
}

The new theory is a conservative extension of the former one. Moreover we have the following valid rules which allow to compute $\Vr$ from $\Vp$:

\DeuxRegles{
  \labu $\vd \Vr(0)$
  \labu \makebox[0pt][l]{$\,\,\Vr(x)\vd x=0\MA{\tsbf{ or }} \Exists u \;(ux=1\tsbf{,}\, \Vp(u))$}
}
{
\labu $\,\,ux=1\tsbf{,}\, \Vp(u)\vd \Vr(x)$}

We can read $\Vp(x)$ as $\Vr(1/x)$, where $\Vr(1/0)\vd \Bot$ (collapse of the theory). The predicate $\Vp(x)$ means that the element $x$ of $\gK$ is not a residually zero element of~$\gV$.
In particular, this predicate satisfies the following axioms in the dynamical theory considered:

\DeuxRegles{
\labu $\vd \Vp(a)\quad (\text{if } a\in\Rti)$
\labu $\,\,\Vp(x+y)\vd \Vp(x)\MA{\tsbf{ or }}\Vp(y)$
\labu $\,\,\Vp(x)\tsbf{,}\,\Vp(y)\vd \Vp(xy)$
}
{
\labu $\,\,\Vp(0)\vd \Bot$
\labu $\,\,xy=1\vd \Vp(x)\MA{\tsbf{ or }}\Vp(y)$
\item[\vspace{\fill}]
}

The Valuativstellensatz becomes, this time without restriction on $x_j$ and $y_i\in\gK$,
\begin{multline*}
  \Vp(y_1)\tsbf{,}\, \dots\tsbf{,}\, \Vp(y_m)\vd \Vp(x_1)\MA{\tsbf{ or }} \dots\MA{\tsbf{ or }} \Vp(x_n)\\
  \iff 1\in \gen{\xn}\gR[\xn,y_1^{-1},\dots,y_m^{-1}]\text,
\end{multline*}
where the right-hand side must be rewritten in the following form to avoid $0^{-1}$:
\begin{multline} \label {eqV'}
  \exists p_1,\dots,p_m\enskip\exists P_1,\dots,P_n\in\gR[\uX,\uY]\enskip
  y_1^{p_1}\dots y_m^{p_m}= x_1P_1(\ux,\uy)+\dots+x_nP_n(\ux,\uy)\text,\\
  \text{where the multiexponents of $\uY$ in the $P_j$'s are all $\leq(p_1,\dots,p_m)$.}
\end{multline}
If one of the $y_i$'s is zero, the equality is automatically satisfied by taking the $P_j$'s identically zero.  

This predicate $\Vp$ is the predicate denoted by~$\Nrn$ in the article \citealt{Lom2000}
which gives a very general Valuativstellensatz.

\begin{remark} \label{remV'}
 A lattice $\Valp\gR$ associated to the predicate $\Vp$ can be introduced. We can then show that $\Valp\gR$ is isomorphic to the lattice opposite to $\Val\gR$. This is because $x\mt x^{-1}$ is a bijection of $\gK\eti$ onto itself.
In this article, we only use the fact that the characterisations \pref{eqVr}
and \pref{eqV'} are equivalent.  \eoe
\end{remark}

\smallskip We shall introduce the valuative lattice of an arbitrary ring in our very last subsection on page~\pageref{subsec-vdim=Vdim2}.

\subsection{Third approach: graded monomial order, \texorpdfstring{$\dimv$}{dimv}}\label{subsecOmonRat}

We denote by $<_{\mathrm{lex}}$ the monomial order on $\ZZ^n$ corresponding to the lexicographic order.

A rational monomial graded order $<_M$ on $\ZZ^n$ or, equivalently, on the monomials of $\gR[X_1^{\pm1}, \allowbreak\dots, X_1^{\pm1}]$, is defined by means of a matrix $M\in \Mat_n(\NN)$ invertible in $\Mat_n(\QQ)$ with the coefficients of the first row all $>0$, as follows:
\[
(e_1,\dots,e_n)<_M (f_1,\dots,f_n) \equidef
M\cdot \cmatrix{e_1\\ \vdots \\ e_n} <_{\mathrm{lex}} \cmatrix{f_1\\ \vdots\\f_n}\text.
\]
When the coefficients of the first row of $M$ are equal to $1$, the monomial order $<_M$ is an order subordinate to total degree.
The monomial \textsl{graded lexicographic} order $<_{\mathrm{grlex}}$ is
the one defined by the matrix
\[\cmatrix{
1&1&&\dots&1\\
1&0&0&\dots&0\\
0&1&0& &0\\
\vdots & && &\vdots\\
0&0&\dots& 1&0
}\text.\]

\smallskip \citet{Lom06} characterises constructively the Krull dimension of an arbitrary ring~$\gR$  by the equivalence between $\Kdim\gR\leq n$ and the fact that for all $x_0,\dots,x_n\in\gR$ we have a polynomial $P\in\gR[X_0,\dots,X_n]$ which vanishes at $(x_0,\dots,x_n)$ and whose trailing coefficient for the lexicographic order is equal to $1$. For example, $\Kdim\gR\leq 1$ if and only if, for all $x_0,x_1\in\gR$ we can find an equality $0=x_0^{e_0}(x_1^{e_1}(1+c_1x_1)+c_0x_0)$: here the $c_i$'s are elements of $\gR$ or just as well elements of $\gR[x_0,x_1]$.

\smallskip \citet{KV2014} show, within classical mathematics, that for a noetherian ring one can characterise the Krull dimension in the same way using an arbitrary monomial order.

\smallskip \citet{KY2020} show, within classical mathematics, that for an arbitrary ring one can characterise the valuative dimension in the same way, provided one uses a graded rational monomial order instead of the lexicographic order.

In other words, we can paraphrase them by the following definition.
\begin{definition}\label{defKY1}
We say that $\dimv\gR\leq n$ if,  considering a graded rational monomial order
$<_M$, we have for all $x_0,\dots,x_n\in\gR$ a polynomial $P\in\gR[X_0,\dots,X_n]$ which vanishes at $(x_0,\dots,x_n)$ and whose trailing coefficient for the order $<_M$ is equal to $1$.
\end{definition}

The result in~\citealt{KY2020} is then the following.
\begin{theorem}\phantomsection\label{thKY1}
  \begin{enumerate}
  \item[]
\item This definition of $\dimv\gR\leq n$ does not depend on the matrix $M$ considered.
\item It is equivalent within classical mathematics to the fact that the valuative dimension of~$\gR$
is $\leq n$.
\end{enumerate}
\end{theorem}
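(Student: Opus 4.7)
The plan is to prove Part 2 directly and treat Part 1 as a consequence, once Part 2 has been established for a single fixed graded rational order (e.g.\ the graded-lex order), by showing that the condition of Definition~\ref{defKY1} is the same for any other graded rational order $<_M$.

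I first reduce to the pp-ring case via the minimal pp-closure $\gR\mapsto\Rmin$ of Theorem and Definition~\ref{thAmin}: since $\Rmin$ is integral over $\gR$ and $\vdim\gR$ is defined through $\Rmin$, polynomial identities of the requisite shape transfer in both directions. From here my bridge is the polynomial-ring Krull dimension bound $\Kdim\gR[X_1,\ldots,X_n]\leq 2n$ supplied by Theorem~\ref{thValDim}(\ref{thValDim2}) combined with the lexicographic characterization of $\Kdim$ in \citealt{Lom06}. Starting from $2n+1$ elements of $\gR[X_1,\ldots,X_n]$ arranged in the interleaved list $x_0,\,X_1,\,x_1X_1,\,X_2,\,x_2X_2,\,\ldots,\,X_n,\,x_nX_n$, the lex collapse yields a vanishing polynomial with lex-trailing coefficient~$1$; extracting the multihomogeneous component of the appropriate multidegree in $(X_1,\ldots,X_n)$ produces a relation on $x_0,\ldots,x_n\in\gR$ whose trailing monomial for the graded-lex order has coefficient~$1$. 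Conversely, given such a graded-lex relation, I homogenize it with the fresh indeterminates $X_1,\ldots,X_n$ to recover the desired $2n+1$-term lex identity in the polynomial ring.

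For Part 1, once Part 2 is known for the graded-lex order, the task reduces to showing that the condition of Definition~\ref{defKY1} for an arbitrary graded rational order $<_M$ is equivalent to $\Vdim\gR\leq n$. If $<_M$ shares the same positive weight vector (first row of $M$) as graded-lex, I use that both orders induce the same weighted-degree stratification; a relation with trailing monomial for $<_M$ can be converted into one with trailing monomial for graded-lex by iteratively rewriting within the finitely many monomials of each weighted-degree layer, which terminates because each such layer is finite. To pass between distinct positive rational weight vectors I combine this with the substitution $X_i\mapsto X_i^{N_i}$ for suitable positive integers $N_i$, which transports $<_M$ into a graded rational order with a rescaled weight, so that all positive rational weight vectors become comparable up to scaling.

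\textbf{The main obstacle} will be the homogenization step in Part 2: one must carefully match multidegrees so that the lex-trailing monomial of the identity in $\gR[X_1,\ldots,X_n]$, once one substitutes the interleaved list of $2n+1$ elements, projects exactly onto the intended graded-lex trailing monomial of the base-ring relation, without spurious monomials of lower graded-lex rank appearing on the right-hand side. This combinatorial bookkeeping is where the factor-of-two relationship between $\Kdim\gR[X_1,\ldots,X_n]$ and $\Vdim\gR$ plays its role, and where the delicate step of the argument lies. The rewriting argument in Part 1 is conceptually simpler but requires a strictly decreasing rank function on the finite weighted-degree layers to guarantee termination; that this rank exists is precisely what the graded hypothesis on $<_M$ secures.
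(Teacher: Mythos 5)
The paper does not actually prove this theorem: it appears right after the words ``The result in \citealt{KY2020} is then the following'', so its proof is delegated to Kemper and Yengui (2020). What the paper itself contributes is the remark, made just after the statement, that the KY proof is constructive in the integral case, together with the reduction from an arbitrary ring to a pp-ring carried out later in Section~\ref{subsecvdim=dimv}. So there is no internal argument to compare yours against.

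That said, your outline has the right shape --- reduce to the pp-case via $\Rmin$, use the bridge $\Kdim\RXn\leq 2n$ and the Lombardi lexicographic characterisation of $\Kdim$, and interleave/homogenise to pass between $n+1$ variables with a graded order and $2n+1$ variables with lex. This is indeed the shape of the Kemper--Yengui argument. But the ``main obstacle'' you flag is a genuine gap, not mere bookkeeping, and as written the extraction step fails. Take $n=1$: the Lombardi collapse applied to $(x_0,X_1,x_1X_1)$ in $\gR[X_1]$ (with $\Kdim\gR[X_1]\leq 2$) yields $x_0^{e_0}\bigl(X_1^{e_1}\bigl((x_1X_1)^{e_2}(1+c_2x_1X_1)+c_1X_1\bigr)+c_0x_0\bigr)=0$ with $c_i\in\gR[X_1]$; extracting the coefficient of $X_1^{e_1+e_2}$ gives $x_0^{e_0}x_1^{e_2}+\alpha x_0^{e_0}+\beta x_0^{e_0+1}=0$, whose graded-lex trailing monomial (for $e_2\geq1$) is $Y_0^{e_0}$ with coefficient $\alpha$, not $1$. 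The structural reason is that a monomial $Z_0^{f_0}\cdots Z_{2n}^{f_{2n}}$ can be lex-\emph{larger} than the trailing monomial $Z_0^{e_0}\cdots Z_{2n}^{e_{2n}}$ (hence legitimately present in the $2n{+}1$-variable identity) and yet project, after the substitution, onto a $Y$-monomial of \emph{strictly smaller total degree} --- e.g.\ $f_0=e_0+1$ and all $f_{2i}=0$; these spoilers must be precluded or cancelled, and your sketch does not say how. The reverse homogenisation $Y_0^{a_0}\cdots Y_n^{a_n}\mapsto Z_0^{a_0}Z_1^{N-a_1}Z_2^{a_1}\cdots$ has a dual defect: lex on the $Z$'s compares $a_0$ first, not total degree, so it does not convert a graded order into lex. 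For Part~1, the ``iterative rewriting within finite weighted-degree layers'' claim is also left unsubstantiated --- a vanishing polynomial with trailing coefficient~$1$ for one order need not give one for another order even within a fixed degree layer --- and while the power-substitution $X_i\mapsto X_i^{N_i}$ you invoke is the right tool for rescaling weight vectors, it does not by itself dispose of the within-layer change of tie-break.
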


By convention $\dimv\gR=-1$ means that the ring is trivial.

In fact, the proof of Theorem~\ref{thKY1} in \citealt{KY2020} is clearly constructive for the case of an integral ring~$\gR$: for all $n\geq 0$ we have constructively the equivalence
\begin{equation} \label {eqvdim-dimv}
\vdim\gR\leq n \quad \text{(Definition~\ref{defiValdim}.1)} \iff \dimv\gR\leq n\quad \text{(Definition~\ref{defKY1}).}
\end{equation}

\section{Constructive equivalence of the three constructive definitions}\label{secdival3}

\subsection{\texorpdfstring{$\vdim=\dimv$}{vdim=dimv}}\label{subsecvdim=dimv}

Let us state a first lemma which extends \pref{eqvdim-dimv} to the pp-case.

\begin{lemma} \label{lemvdim-dimvQi}
For a pp-ring $\gR$ we have the equivalence
\begin{equation} \label {eqvdim-dimvQi}
\vdim\gR\leq d \iff \dimv\gR\leq d\text.
\end{equation}
\end{lemma}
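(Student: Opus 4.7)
The plan is to lift the integral-case equivalence~\pref{eqvdim-dimv} to pp-rings via the pp-ring splitting lemma and the elementary local-global machinery~No.~1. Both implications follow the same scheme: decompose the problem into components in which the relevant elements are zero or regular, invoke the integral case in each component, and glue the local polynomials using the idempotents.

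For the direction $\vdim\gR\leq d\Rightarrow\dimv\gR\leq d$, given $x_0,\dots,x_d\in\gR$ and a graded rational monomial order~$<_M$, I would first apply Lemma~\ref{thScindageQi} to produce a fundamental system of orthogonal idempotents $(e_j)$ such that in each component $\gR_j=\gR[1/e_j]$ every~$x_i$ is zero or regular. Valuative dimension is stable under such splittings, so $\vdim\gR_j\leq d$; and since the integral-case proof of~\pref{eqvdim-dimv} really only uses that the tested elements are zero or regular, it applies in~$\gR_j$ and yields a polynomial $P_j\in\gR_j[X_0,\dots,X_d]$ that vanishes at $(x_0,\dots,x_d)$ with trailing coefficient~$1$ at some monomial~$X^{\alpha_j}$ for~$<_M$.

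It then remains to reassemble the local~$P_j$'s into a single polynomial over~$\gR$. Let $\alpha$ be the componentwise supremum of the~$\alpha_j$'s in~$\NN^{d+1}$ and set $\gamma_j=\alpha-\alpha_j$; compatibility of~$<_M$ with monomial multiplication ensures that $X^{\gamma_j}P_j$ still has trailing coefficient~$1$, but now at~$X^\alpha$. Setting $P=\som_j e_j\,\widetilde{X^{\gamma_j}P_j}$, with the tilde denoting the natural lift to~$\gR$, the resulting polynomial has trailing coefficient $\som_j e_j=1$ at~$X^\alpha$ and vanishes at $(x_0,\dots,x_d)$ in~$\gR$. The reverse direction $\dimv\gR\leq d\Rightarrow\vdim\gR\leq d$ is obtained by the same splitting strategy followed by one more appeal to~\pref{eqvdim-dimv} in each component, together with the characterisation of $\vdim\gR\leq d$ for pp-rings given in Theorem~\ref{thValDim}\,(\textit{\ref{thValDim2}}).

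The main difficulty is this gluing step: different components produce trailing monomials at different multi-indices, and a naive combination $\som_j e_jP_j$ would not yield a trailing coefficient equal to~$1$. The resolution is to align all local trailing monomials at their componentwise supremum via multiplication by suitable~$X^{\gamma_j}$, which is permissible precisely because~$<_M$ is a monomial order compatible with multiplication. A secondary verification is that $\vdim$ is well-behaved under localization at an idempotent, which follows at once from the characterisation in Theorem~\ref{thValDim}.
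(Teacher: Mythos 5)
Your proposal takes the same approach as the paper, whose entire proof is one sentence invoking the elementary local-global machinery No.~\hyperref[MethodQI]{1}; you have made the machinery explicit and correctly identified that the nontrivial content lies in the gluing step, namely aligning the local trailing monomials at their componentwise supremum before recombining through the fundamental system of idempotents. One caveat: the machinery is meant to be applied \emph{dynamically}, splitting the ring further each time the integral-case algorithm tests some element for being zero or regular, not only as a static preprocessing on the inputs $x_0,\dots,x_d$ (the component rings $\gR_j=\gR[1/e_j]$ are still pp-rings, not integral, and the integral-case proof of~\pref{eqvdim-dimv} may test elements other than the inputs); once you let the splitting refine as the algorithm runs, your gluing argument applies unchanged to the finitely many resulting components.
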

%
\begin{proof}
We take the constructive proof of \pref{eqvdim-dimv} given in the integral case and use the elementary local-global machinery No.~1.
\end{proof}

To extend the constructive equivalence \pref{eqvdim-dimv} from the case of an integral ring to the case of an arbitrary ring, given that in the general case we have defined $\vdim\gR\leq d$
as meaning $\vdim\Rmin\leq d$, and given Lemma~\ref{lemvdim-dimvQi}, we need only prove constructively the equivalence
\begin{equation} \label {eqdimv-dimv}
\dimv\gR\leq d \iff \dimv\Rmin\leq d\text.
\end{equation}
In particular, this implies constructively the analogue of Equivalence \pref{eq2thValDim}
for $\dimv$.

Equivalence \pref{eqdimv-dimv} reduces to the following two lemmas.
\begin{lemma} \label{lem-dimv-dimvred} We always have
\begin{equation} \label {eq-dimv-dimvred}
\dimv\gR\leq d \iff \dimv\Rred\leq d\text.
\end{equation}
\end{lemma}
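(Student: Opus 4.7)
The plan is a direct polynomial-level translation through the canonical surjection $\pi\colon\gR\to\Rred$ (whose kernel is $\sqrt[\gR]{0}$). The criterion in Definition~\ref{defKY1} involves a polynomial with trailing coefficient $1$ for a fixed graded rational monomial order $<_M$, and both features---vanishing at a point and trailing coefficient equal to $1$---transport well under $\pi$ and under taking powers.

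For $\dimv\gR\leq d\Rightarrow\dimv\Rred\leq d$: given $\bar x_0,\dots,\bar x_d\in\Rred$, I would pick preimages $x_i\in\gR$ and a witness $P\in\gR[X_0,\dots,X_d]$ provided by the hypothesis. Reducing the coefficients of $P$ through $\pi$ yields $\bar P\in\Rred[X_0,\dots,X_d]$ vanishing at $(\bar x_0,\dots,\bar x_d)$, with the same trailing monomial as $P$ and trailing coefficient $\pi(1)=1$ (no monomial cancellation at the trailing position since all other monomials of $P$ are strictly greater for $<_M$).

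For $\dimv\Rred\leq d\Rightarrow\dimv\gR\leq d$: given $x_0,\dots,x_d\in\gR$, apply the hypothesis to $(\pi(x_0),\dots,\pi(x_d))$ to obtain $\bar P\in\Rred[X_0,\dots,X_d]$ with trailing coefficient $1$, then lift its coefficients arbitrarily to $\gR$ (sending $1\mapsto 1$) to obtain $P$ with the same monomial support and trailing coefficient $1$. Then $P(x_0,\dots,x_d)\in\sqrt[\gR]{0}$, so there is $k\geq 1$ with $P(x_0,\dots,x_d)^k=0$, and it remains to verify that $P^k$ itself satisfies the conditions of Definition~\ref{defKY1}.

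The only nontrivial check, and the main (mild) obstacle, is that $P^k$ still has trailing coefficient $1$ for $<_M$. This relies on $<_M$ being a total order compatible with addition of exponents: if $\alpha_0$ is the trailing monomial of $P$, then for any tuple $(\beta_1,\dots,\beta_k)$ of monomials from the support of $P$ one has $\beta_1+\dots+\beta_k\geq k\alpha_0$, with equality forcing $\beta_1=\dots=\beta_k=\alpha_0$. Hence the coefficient of $X^{k\alpha_0}$ in $P^k$ is $1^k=1$, and $X^{k\alpha_0}$ is the unique smallest monomial of $P^k$. The degenerate case $d=-1$ is immediate, since $1=0$ in $\gR$ iff $1=0$ in $\Rred$.
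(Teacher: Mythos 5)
Your proof is correct and complete. The paper simply leaves this lemma to the reader, so there is no authorial proof to compare against; your argument---reducing coefficients for one direction, and lifting coefficients and raising to a suitable power for the other---is exactly the natural one, and the key verification that the trailing coefficient of $P^k$ remains $1$ is handled properly using the compatibility of $<_M$ with the additive structure on exponent vectors.
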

\begin{proof}
The proof is left to the reader.
\end{proof}

\begin{lemma} \label{lem-dimv-dimvred2} Let $\gR$ be a reduced ring and $a\in\gR$. Then we have
\begin{equation} \label {eq-dimv-dimvred2}
\dimv\gR\leq d \iff \dimv\gR_{\so a}\leq d\text.
\end{equation}
\end{lemma}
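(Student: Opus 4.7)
The plan is to prove the two implications separately, exploiting the explicit product description $\gR_{\so a}=\gR/a\epr\times\gR/(a\epr)\epr$ from Lemma~\ref{lem20MorRc}, together with two elementary preservation properties for the condition ``$\dimv\leq d$''.

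For the direction $(\Rightarrow)$, I will first check that $\dimv\leq d$ passes to arbitrary quotients $\gR\to\gR/\fb$: given elements of the quotient, lift them to $\gR$, apply the hypothesis to obtain a witness polynomial $P\in\gR[\uX]$, and reduce $P$ modulo $\fb$; since the trailing coefficient of $P$ equals $1$, which remains nonzero in the quotient, the trailing monomial is unchanged by the reduction. I will then check that $\dimv\gR_1\leq d$ and $\dimv\gR_2\leq d$ together imply $\dimv(\gR_1\times\gR_2)\leq d$: producing witnesses $P_i\in\gR_i[\uX]$ with trailing monomials $X^{\alpha_i}$ and trailing coefficient $1$, the pair $(X^{\alpha_2}P_1,X^{\alpha_1}P_2)\in(\gR_1\times\gR_2)[\uX]$ is a joint witness with common trailing monomial $X^{\alpha_1+\alpha_2}$ and trailing coefficient $(1,1)=1$. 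Applied to the product structure of $\gR_{\so a}$, these two facts conclude the direction.

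For the direction $(\Leftarrow)$, I use the embedding $\gR\hookrightarrow\gR_{\so a}$ of Lemma~\ref{lem20MorRc}(2). Given $x_0,\dots,x_d\in\gR$, the hypothesis yields $P=(P_1,P_2)\in\gR_{\so a}[\uX]$ of trailing coefficient $1$ vanishing at $(x_0,\dots,x_d)$. Because the trailing coefficient of $P$ is $(1,1)$, both components $P_1$ and $P_2$ must share the same trailing monomial $X^\gamma$ with trailing coefficient $1$ in their respective factors. I lift $P_1$ and $P_2$ coefficient by coefficient to $\wh P_1,\wh P_2\in\gR[\uX]$, choosing the lift $1$ for the coefficient at $X^\gamma$. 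Then $\wh P_1(x_0,\dots,x_d)\in a\epr$ and $\wh P_2(x_0,\dots,x_d)\in(a\epr)\epr$, so the product $Q:=\wh P_1\wh P_2\in\gR[\uX]$ vanishes at $(x_0,\dots,x_d)$ because $a\epr\cdot(a\epr)\epr=0$, and its trailing coefficient is $1\cdot 1=1$.

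The main technical point throughout is careful bookkeeping of trailing monomials and trailing coefficients under the three operations in play: multiplication by a monomial, reduction to a quotient, and multiplication of two polynomials whose trailing coefficients multiply to a nonzero scalar. Each is routine for any monomial order, but the rescaling step $(X^{\alpha_2}P_1,X^{\alpha_1}P_2)$ in the forward direction and the verification that both components share the same trailing monomial in the backward direction are the places where attention is required.
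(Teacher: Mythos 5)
Your proof is correct and follows the same overall plan as the paper's: the forward direction by showing that $\dimv\leq d$ is preserved under quotients and finite products, the backward direction by lifting the witness from $\gR_{\so a}$ to $\gR$ and exploiting $a\epr\cdot(a\epr)\epr=0$. Where the two proofs diverge is in the lifting step of the backward direction. The paper lifts to $P_1,P_2\in\gR[\uX]$ with trailing coefficients $c_1=1+y_1$, $c_2=1+y_2$ for some $y_1\in a\epr$, $y_2\in(a\epr)\epr$, aligns their trailing monomials by multiplying by suitable monomials, and then applies the corrections $Q_1=P_1-y_1P_2$, $Q_2=P_2-y_2P_1$ (using $y_1y_2=0$) to recover trailing coefficient exactly $1$ before forming $Q_1Q_2$. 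You instead apply $\dimv\gR_{\so a}\leq d$ to the single point $\psi_a(\ux)$, so that the two components of the witness automatically share the same trailing monomial $X^\gamma$, and choose a good lift so that the trailing coefficient of each $\wh P_i$ is $1$ at $X^\gamma$ outright; the paper's correction step then becomes unnecessary, which is a clean simplification. One small point to make explicit: you specify the lift $1$ for the coefficient at $X^\gamma$, but you should also specify the lift $0$ for the coefficient at each monomial $X^\beta$ with $\beta<_M\gamma$ (the zero of the quotient ring has many preimages in $\gR$); otherwise the trailing monomial of $\wh P_i$ could drop below $X^\gamma$ with a spurious trailing coefficient lying in $a\epr$ or $(a\epr)\epr$ rather than being $1$.
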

%
\begin{proof}
  The implication $\implies$ is quite simple. On the one hand, the implication \[\dimv\gR\leq d \implies \dimv(\gR/\fa)\leq d\] is clear for any quotient $\gR/\fa$. And on the other hand we see that
  \[
    \dimv\gB\leq d\text{ and }\dimv\gC \leq d \implies \dimv(\gB\times \gC)\leq d.
  \]
Let us consider the converse implication. Consider $x_0,\dots,x_d\in\gR$. We have first of all a polynomial $P_1(X_0,\dots,X_d)\in\gR[\uX]$ with trailing coefficient equal to $c_1=1+y_1$ and
such that $P_1(x_0,\dots,x_d)=z_1$ with $y_1,z_1\in a\epr$. We have moreover a polynomial $P_2(\uX)$ with trailing coefficient $c_2=1+y_2$ and such that $P_2(\ux)=z_2$, with $y_2,z_2\in (a\epr)\epr$.
We then have \[y_1y_2=y_1z_2=z_1y_2=z_1z_2=0.\]
If we multiply $P_1$ and~$P_2$ by suitable monomials, we can assume that their trailing monomials coincide. Then $Q_2=P_2-y_2P_1$ has trailing coefficient $(1+y_2)-y_2\*{(1+y_1)}=1$ and satisfies $Q_2(\ux)=z_2$.
Similarly $Q_1=P_1-y_1P_2$ has trailing coefficient $(1+y_1)-y_1(1+y_2)=1$ and satisfies $Q_1(\ux)=z_1$. So the polynomial $Q_1Q_2$ suits.
\end{proof}
%

\subsection{\texorpdfstring{$\vdim=\Vdim$}{vdim=Vdim} in the integral case}\label{subsec-vdimv=Vdim1}

\begin{lemma} \label{lem-Vdim<=vdim}
For an integral ring $\gR$, and for an integer $n\geq -1$, the following applies:
\[\Vdim\gR\leq n\implies\vdim\gR\leq n\text.\]
\end{lemma}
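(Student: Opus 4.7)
Set $\gK = \Frac(\gR)$. Given $\xm \in \gK$, the ring $\gR[\xm]$ is integral with $\Frac(\gR[\xm]) = \gK$. The plan combines two reductions: express $\Val(\gR[\xm])$ as a quotient of $\Val(\gR)$ to import the dimension hypothesis, then embed $\Zar(\gR[\xm])$ into $\Val(\gR[\xm])$ via the Valuativstellensatz.

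First I observe that a valuation ring of $\gK$ contains $\gR[\xm]$ if and only if it contains $\gR$ and satisfies $\Vr(x_i)$ for each $i$; equivalently, $\Val(\gR[\xm])$ is the quotient distributive lattice of $\Val(\gR)$ obtained by imposing $\Vr(x_1)=1,\dots,\Vr(x_m)=1$. Since Krull dimension does not increase under quotients, $\Kdim\Val(\gR[\xm]) \leq \Kdim\Val(\gR) = \Vdim\gR \leq n$.

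Next I would define $\varphi\colon \Zar(\gR[\xm]) \to \Val(\gR[\xm])$ on generators by $\varphi(\rD(a)) = \Vp(a)$ for $a \in \gR[\xm]$. Among the four defining relations of Proposition~\ref{propZar}, the only nontrivial one is $\Vp(ab) = \Vp(a) \wedge \Vp(b)$: the $\leq$ direction is an axiom of $\Vp$, and the $\geq$ direction holds because every valuation ring $\gV$ in the spectrum of $\Val(\gR[\xm])$ contains both $a$ and $b$, and in a local ring a product is a unit iff each factor is. Then $\varphi$ is order-reflecting on generators: a lattice entailment $\Vp(a) \vd \Vp(b_1),\dots,\Vp(b_k)$ yields, via the Valuativstellensatz~\eqref{eqV'}, an identity $a^p = \sum_j b_j P_j(\ub,a)$ with $P_j \in \gR[\xm][\uX,Y]$, which evaluated in $\gR[\xm]$ witnesses $a^p \in \gen{b_1,\dots,b_k}$, i.e.\ $\rD(a) \leq \bigvee_j \rD(b_j)$ in $\Zar(\gR[\xm])$. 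Using that every element of $\Zar(\gR[\xm])$ is a join of $\rD(c)$'s and every element of $\varphi(\Zar(\gR[\xm]))$ is a join of $\Vp(c)$'s for $c \in \gR[\xm]$ (by multiplicativity on $\gR[\xm]$-elements), this promotes to all inequalities, so $\varphi$ is injective.

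Finally, for any $a_0,\dots,a_n \in \gR[\xm]$, apply the dimension bound $\Kdim\Val(\gR[\xm]) \leq n$ to $\Vp(a_0),\dots,\Vp(a_n)$ to obtain a complementary sequence in $\Val(\gR[\xm])$, and translate each of its inequalities back into $\Zar(\gR[\xm])$ through \eqref{eqV'}. This would yield $\Kdim\gR[\xm] \leq n$ for every $\xm \in \gK$, hence $\vdim\gR \leq n$. The main obstacle is precisely this last conversion: the complementary sequence in $\Val(\gR[\xm])$ is composed of arbitrary lattice elements---joins of meets of $\Vp(\xi)$ for $\xi \in \gK$---not necessarily in the image of~$\varphi$; extracting from it a complementary sequence made of $\rD(b_i)$'s for $b_i \in \gR[\xm]$ requires iterated use of the polynomial-identity form~\eqref{eqV'} to clear denominators and remain inside the subring~$\gR[\xm]$.
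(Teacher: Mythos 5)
Your strategy differs substantially from the paper's. The paper's proof of this lemma is a short chain of citations: it invokes two results of \citealt{Coq2009}, namely $\Vdim\gR\leq n\implies\Kdim\gR\leq n$ and $\Vdim\gR\leq n\implies\Vdim\RX\leq n+1$, iterates the latter to obtain $\Vdim\RXn\leq 2n$, applies the former to $\RXn$ to get $\Kdim\RXn\leq 2n$, and concludes via Item~\textsl{2} of Theorem~\ref{thValDim} (from \citealt{CACM}). You instead attempt to verify $\Kdim\gR[\xm]\leq n$ directly for each tuple $\xm\in\gK$, by presenting $\Val(\gR[\xm])$ as the quotient of $\Val(\gR)$ by $\Vr(x_1)=\dots=\Vr(x_m)=1$ (correct, and dimension does decrease under quotients) and then trying to transport the resulting dimension bound along an embedding $\varphi\colon\Zar(\gR[\xm])\hookrightarrow\Val(\gR[\xm])$, $\rD(a)\mapsto\Vp(a)$.

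The gap you acknowledge at the end is not a clean-up step; it is the entire content of the argument. An injective morphism of distributive lattices $\gT_1\hookrightarrow\gT_2$ does \emph{not} in general imply $\Kdim\gT_1\leq\Kdim\gT_2$: the three-element chain $\{0<a<1\}$ embeds into the four-element Boolean algebra $\{0,a,a',1\}$, yet the chain has dimension~$1$ and the Boolean algebra has dimension~$0$. So once you have $\Kdim\Val(\gR[\xm])\leq n$ and the order-reflecting $\varphi$, you still have no control over $\Kdim\Zar(\gR[\xm])$: the complementary sequence $(\mathfrak u_0,\dots,\mathfrak u_n)$ that Item~\textsl{3} of Theorem~\ref{th-dico-trdi-spec-dim1} produces in $\Val(\gR[\xm])$ consists of joins of meets of $\Vp(\xi)$ with $\xi\in\gK$ arbitrary, and these typically lie outside the image of $\varphi$. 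Replacing each $\mathfrak u_i$ by some $\rD(b_i)$ with $b_i\in\gR[\xm]$ while preserving all the inequalities in~\eqref{eqC2G} is exactly the kind of denominator-clearing collapse manipulation that the paper spends the entire ``Proof of the converse inequality'' subsection carrying out for the \emph{opposite} implication, and it is not apparent that the argument reverses. As written, your proposal establishes the well-definedness and injectivity of $\varphi$ (both plausible, modulo a swap in which inclusion is the $\Vp$-axiom and which needs the $1/a=b\cdot(1/(ab))$ trick), but not the lemma.
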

%
\begin{proof}
The article \citealt{Coq2009} shows on the one hand that
\[\Vdim\gR\leq n\implies\Kdim\gR\leq n\]
\vspace{-1.5em}

\noindent and on the other hand that
\[\Vdim\gR\leq n\implies\Vdim\RX\leq n+1.\]
Therefore $\Vdim\gR\leq n\implies\Kdim\RXn\leq 2n$. In \citealt{CACM} it is shown that
$\Kdim\RXn\leq 2n\implies\vdim\gR\leq n$.
\end{proof}

\subsubsection{Proof of the converse inequality}

We shall prove $\vdim\leq n\implies\Vdim\leq n$ by constructing complementary sequences, relying on Lemma~\ref{lem-trdi-spec-dim}. In order to understand the proof, we shall treat first the cases \(n=2,3,4\). When \(n=2\), the complementary sequence consists of elements of the form~\(\Vp(y)\). In cases  \(n=3\) and \(n=4\) we find new ideas in order construct a complementary sequence in the distributive lattice generated by these elements.

\medskip \noindent \fbox{$\vdim\leq 2\implies\Vdim\leq 2$}

\smallskip \noindent Suppose that we have $x_0,x_1,x_2$ nonzero in the field of fractions $\gK$ of $\gR$. We are looking for $y_0,y_1,y_2 \in \gK$ (we know that
$y_2$ is zero) such that:
\begin{align}
\Vp(x_0) \vee \Vp(y_0) &=1\text,\label{eq:1}\\
\Vp(x_0) \wedge \Vp(y_0) &\leq \Vp(x_1) \vee \Vp(y_1)\text,\label{eq:2}\\
\Vp(x_1) \wedge \Vp(y_1) &\leq \Vp(x_2) \vee \Vp(y_2) =\Vp(x_2)\text{ (since $y_2=0$),}\label{eq:3}\\
\Vp(x_2) \wedge \Vp(y_2) &=0\text{ (this is guaranteed by $y_2=0$).}\notag
\end{align}
\eqref{eq:1} amounts to saying that $1=\gen{x_0,y_0}$ in $\gR[x_0,y_0]$.

\noindent \eqref{eq:2} amounts to saying that $1=\gen{x_1,y_1}$ in $\gR[x_0^{-1}, y_0^{-1}, x_1,y_1]$.

\noindent \eqref{eq:3} amounts to saying that $1=\gen{x_2}$ in $\gR[x_1^{-1}, y_1^{-1}, x_2]$.

\smallskip \noindent We use the fact that $\Kdim (\gR[x_0,x_1,x_2]) \leq 2$. We have a polynomial $P \in \gR[X_0,X_1,X_2]$ with
trailing coefficient $1$ (for the lexicographic monomial order with $X_2>X_1 > X_0$) which vanishes at $(x_0,x_1,x_2)$. Let $X_2^{n}X_1^mX_0^{\ell}$ be the trailing monomial of $P$.  Dividing $P(x_0,x_1,x_2)$ by $x_2^{n}x_1^mx_0^{\ell}$, we obtain an equality
\[
  1+ x_0f_0(x_0) + x_1 f_1(x_1,x_0^{\pm1}) + x_2 f_2(x_2,x_1^{\pm1},x_0^{\pm1})=0,
\]

\noindent where $f_0\in \gR[X_0]$, $f_1 \in \gR[X_1,X_0^{\pm1}]$ and $f_2 \in \gR[X_2,X_1^{\pm1},X_0^{\pm1}]$ (the $x_0f_0(x_0)$ comes from monomials of $P$ other than $M$ whose degree in $X_2$ is
equal to $n$ and whose degree in $X_1$ is
equal to $m$, $x_1 f_1(x_1,x_0^{\pm1})$ comes from the other monomials of $P$ whose degree in $X_2$ is
equal to $n$, while $x_2 f_2(x_2,x_1^{\pm1},x_0^{\pm1})$ comes from monomials of $P$ whose degree in $X_2$ is
$>n$). For some $r_0,r_1 \in \mathbb{N}$, we have:
\[
  1+ x_0f_0(x_0) + x_0^{r_0} \bigl( x_1 g_1(x_1,x_0^{-1}) + \frac{x_2}{x_1^{r_1}}g_2(x_2,x_1,x_0^{-1})\bigr)=0 ,
\]
where $g_1 \in \gR[X_1,X_0^{-1}]$ and $g_2 \in \gR[X_2,X_1,X_0^{-1}]$. We see that $y_0= \dfrac{1+ x_0f(x_0)}{x_0^{r_0}}$ and $y_1=\dfrac{x_2}{x_1^{r_1}}$ suit (note that $x_2=x_1^{r_1}y_1 \in \gR[ x_1,y_1] \subseteq \gR[x_0^{-1}, y_0^{-1}, x_1,y_1]$).

\smallskip \noindent Let us consider an example of collapse:
\[
  x_0 x_1^2x_2^2 + 2 x_0^2 x_1^2x_2^2 +3 x_1^4x_2^2 + x_0^2 x_1^5x_2^2 + 3x_2^3 +2 x_1x_2^3 + x_0^2 x_1^3x_2^4 = 0.
\]
Dividing by $x_0 x_1^2x_2^2$, we obtain an equality
\[
   1+ 2x_0^2 + x_0\Bigl(x_1 \Bigl(\frac{3x_1}{x_0^2} +x_1^2\Bigr) + \frac{x_2}{x_1^2}\Bigl(\frac{3}{x_0^2}+ \frac{2x_1}{x_0^2}+ x_2x_1^3\Bigr)\Bigr)=0.
\]
We see that $y_0= \dfrac{1+ 2x_0^2}{x_0}$ and $y_1=\dfrac{x_2}{x_1^2}$ suit.\medskip

 \medskip \noindent \fbox{$\vdim\leq 3\implies\Vdim\leq 3$}

 \smallskip \noindent Let $x_0,x_1,x_2,x_3$ be nonzero in the field of fractions $\gK$ of $\gR$. We look for $\mathfrak u_0,\mathfrak u_1,\mathfrak u_2,\mathfrak u_3$ in the distributive lattice generated by the $V'(x)$ which form a complementary sequence of $x_0,x_1,x_2,x_3$ (see the inequalities in~$(\ref{eqC2G})$). In other words: 
 {\small\[
 1=V'(x_0) \vee \mathfrak u_0,\, V'(x_0) \wedge \mathfrak u_0 \leq V'(x_1) \vee \mathfrak u_1,\ldots,\, V'(x_2) \wedge \mathfrak u_2 \leq V'(x_3) \vee \mathfrak u_3,\, V'(x_3) \wedge \mathfrak u_3=0.
 \]}

 \noindent We propose to find the $\mathfrak u_i$ in the form 
 \[
 \mathfrak u_0=\Vp(y_0),\mathfrak u_1=\Vp(y_1) \wedge \Vp(x_0),\,\mathfrak u_2= \Vp(y_2) ,\,\mathfrak u_3= 0
 \] with
 $y_0,y_1,y_2 \in \gK$ such that
 \begin{align}
 \Vp(x_0) \vee \Vp(y_0) &=1,\label{eq:31}\\
   \Vp(x_0) \wedge \Vp(y_0) &\leq \Vp(x_1) \vee \mathfrak u_1= \big( \Vp(x_1) \vee \Vp(y_1) \big) \wedge \big(\Vp(x_1) \vee \Vp(x_0)\big)\text{ or also}\notag\\
   \Vp(x_0) \wedge  \Vp(y_0) &\leq  \Vp(x_1) \vee  \Vp(y_1)\text{ and}\label{eq:32}\\
   \Vp(x_0) \wedge  \Vp(y_0) &\leq \Vp(x_1) \vee \Vp(x_0)\text,\label{eq:32bis}\\
 \Vp(x_1) \wedge \mathfrak u_1 &\leq \Vp(x_2) \vee \Vp(y_2)\text,\label{eq:33}\\
 \Vp(x_2) \wedge \Vp(y_2) &\leq \Vp(x_3) \vee 
 \mathfrak u_3=\Vp(x_3),\label{eq:34}\\
 \Vp(x_3) \wedge \mathfrak u_3 &=0\text{ (this is guaranteed by $\mathfrak u_3=0$).}\notag
 \end{align}
 \eqref{eq:31} amounts to saying that $1\in\gen{x_0,y_0}$ in $\gR[x_0,y_0]$.

 \noindent \eqref{eq:32} amounts to saying that $1\in\gen{x_1,y_1}$ in $\gR[x_0^{-1}, y_0^{-1}, x_1,y_1]$.

 \noindent 
 \eqref{eq:32bis} is always satisfied since \(\Vp(x_0) \wedge \Vp(y_0) \leq \Vp(x_0) \leq \Vp(x_1) \vee \Vp(x_0)\).\\
 \eqref{eq:33} amounts to saying that $1\in\gen{x_2,y_2}$ in $\gR[x_1^{-1}, y_1^{-1},x_0^{-1}, x_2,y_2]$.\\
 \eqref{eq:34} amounts to saying that $1\in\gen{x_3}$ in $\gR[x_2^{-1}, y_2^{-1}, x_3]$.

 \smallskip \noindent We use the fact that $\Kdim (\gR[x_0,x_1,x_2,x_3]) \leq 3$. \\
 We have a polynomial $P \in \gR[X_0,X_1,X_2,X_3]$ with
 trailing coefficient $1$ (for the lexicographic monomial order with $X_3>X_2>X_1 > X_0$) which vanishes at $(x_0,x_1,x_2,x_3)$. Let $X_3^{n}X_2^{m}X_1^pX_0^{q}$ be the trailing monomial of $P$.  Dividing $P(x_0,x_1,x_2,x_3)$ by $x_3^{m}x_2^nx_1^{p}x_0^{q}$, we obtain an equality
 \[  
 1+ x_0f_0(x_0) + x_1 f_1(x_1,x_0^{\pm1}) + x_2 f_2(x_2,x_1^{\pm1},x_0^{\pm1}) + x_3 f_3(x_3,x_2^{\pm1},x_1^{\pm1},x_0^{\pm1}) =0,
 \]
 where $f_0\in \gR[X_0]$, $f_1 \in \gR[X_1,X_0^{\pm1}]$, $f_2 \in \gR[X_2,X_1^{\pm1},X_0^{\pm1}]$ and $f_3 \in \gR[X_3,X_2^{\pm1},\allowbreak X_1^{\pm1},\allowbreak X_0^{\pm1}]$:
 \begin{itemize}
 \item $x_0f_0(x_0)$ comes from monomials of $P$ other than $M$ whose degree in $X_3$ is
 equal to $m$, the degree in $X_2$ is
 equal to $n$ and the degree in $X_1$ is
 equal to $p$;
 \item $x_1 f_1(x_1,x_0^{\pm1})$ comes from the other monomials of $P$ whose degree in $X_3$ is equal to $m$ and whose degree in $X_2$ is equal to $n$;
 \item $x_2 f_2(x_2,x_1^{\pm1},x_0^{\pm1})$ comes from the other monomials of $P$ whose degree in $X_3$ is equal to
 $m$;
 \item $x_3 f_3(x_3,x_2^{\pm1},x_1^{\pm1},x_0^{\pm1})$ comes from the monomials of $P$ whose degree in $X_3$ is $>m$.
 \end{itemize}
 For some $r_0,r_1 \in \mathbb{N}$, we have
 \begin{equation} \label {eq17}
 1+ x_0f_0(x_0) + x_0^{r_0} \bigl( x_1 g_1(x_1,x_0^{-1}) + {x_1^{r_1}}\bigl(x_2g_2(x_2,x_1^{-1},x_0^{-1})+\frac{x_3}{x_2^{r_2}}\,g_3(x_3,x_2,x_1^{-1},x_0^{-1})\bigr)\bigr)=0 ,
 \end{equation}
 where $g_1 \in \gR[X_1,X_0^{-1}]$, $g_2 \in \gR[X_2,X_1^{-1},X_0^{-1}]$, and $g_3\in \gR[X_3,X_2,X_1^{-1},X_0^{-1}]$.
 \\
 Let $y_0= \dfrac{1+x_0f_0(x_0)}{x_0^{r_0}}$,
 $y_1=\dfrac{y_0+x_1 g_1(x_1,x_0^{-1})}{x_1^{r_1}}$, $y_2=\dfrac{x_3}{x_2^{r_2}}$.
 \\
 Condition \eqref{eq:31}: $1= x_0^{r_0}y_0-x_0f_0(x_0)\in\gen{x_0,y_0}$ in $\gR[x_0,y_0]$ (even if $r_0=0$).
 \\
 Condition \eqref{eq:32}: $y_0=y_1x_1^{r_1}- x_1 g_1(x_1,x_0^{-1})$;
  if $y_0\neq 0$ we divide by $y_0$
 and we obtain $1\in\gen{x_1,y_1}$ in $\gR[x_1,y_1,x_0^{-1},y_0^{-1}]$.
 \\
 Condition \eqref{eq:34}: $ y_2x_2^{r_2}=x_3$ (and we divide by $y_2x_2^{r_2}$ if $y_2\neq 0$).
 \\
 Condition \eqref{eq:33}: Equality (\ref{eq17}) may be rewritten as follows:
 \[
 x_2g_2(x_2,x_1^{-1},x_0^{-1})+y_2g_3(y_2x_2^{r_2},x_2,x_1^{-1},x_0^{-1}) = -y_1\text.
 \]
 If $y_1\neq 0$ we divide by $y_1$.
 \\
 Moreover, we may also conclude if $y_0=0$ or $y_1=0$ (see the comment after Equality~(\ref{eqV'})).

 \medskip \noindent \fbox{$\vdim\leq 4\implies\Vdim\leq 4$}

 \smallskip \noindent Let $x_0,x_1,x_2,x_3,x_4$ be nonzero in the field of fractions $\gK$ of $\gR$. We look for $\mathfrak u_0,\mathfrak u_1,\mathfrak u_2,\allowbreak\mathfrak u_3,\allowbreak\mathfrak u_4$ in the distributive lattice generated by the $V'(x)$ which form a complementary sequence of $x_0,x_1,x_2,x_3,x_4$ (see the inequalities in $(\ref{eqC2G})$). In other words: 
 \[
 1=V'(x_0) \vee \mathfrak u_0, V'(x_0) \wedge \mathfrak u_0 \leq V'(x_1) \vee \mathfrak u_1,\dots, V'(x_3) \wedge \mathfrak u_3 \leq V'(x_4) \vee \mathfrak u_4, V'(x_4) \wedge \mathfrak u_4=0.
 \]
 We propose to find the $\mathfrak u_i$ in the form
 {\small
 \[\mathfrak u_0=\Vp(y_0),\mathfrak u_1=\Vp(y_1) \wedge \Vp(x_0),\mathfrak u_2= V'(y_2) \wedge V'(x_1) \wedge V'(x_0),\mathfrak u_3= \Vp(y_3),\mathfrak u_4= 0 
 \]}

 \noindent with
 $y_0,y_1,y_2,y_3 \in \gK$ such that
 \begin{align}
 V'(x_0) \vee V'(y_0) &=1\text,\label{eq:41}\\
 V'(x_0) \wedge V'(y_0) &\leq V'(x_1) \vee \mathfrak u_1\text{, or also}\notag\\
 V'(x_0) \wedge V'(y_0) &\leq V'(x_1) \vee V'(y_1)\text{,}\label{eq:42}\\
 V'(x_1) \wedge \mathfrak u_1 &\leq V'(x_2) \vee \mathfrak u_2\text{, or also}\notag\\
 V'(x_1) \wedge V'(y_1) \wedge V'(x_0) &\leq V'(x_2) \vee V'(y_2)\text{ and}\label{eq:43}\\
 V'(x_2) \wedge \mathfrak u_2 &\leq V'(x_3) \vee V'(y_3)\text,\label{eq:44}\\
 V'(x_3) \wedge V'(y_3) &\leq V'(x_4) \vee \mathfrak u_4= V'(x_4)\text,\label{eq:45}\\
 V'(x_4) \wedge \mathfrak u_4 &=0\text{ (this is guaranteed by $\mathfrak u_4=0$).}\notag
 \end{align}
 \eqref{eq:41} amounts to saying that $1\in\gen{x_0,y_0}$ in $\gR[x_0,y_0]$.\\
 \eqref{eq:42} amounts to saying that $1\in\gen{x_1,y_1}$ in $\gR[y_0^{-1},x_0^{-1} , x_1,y_1]$.\\
 \eqref{eq:43} amounts to saying that $1\in\gen{x_2,y_2}$ in $\gR[y_1^{-1},x_1^{-1},x_0^{-1}, x_2,y_2]$.\\
 \eqref{eq:44} amounts to saying that $1\in\gen{x_3,y_3}$ in $\gR[y_2^{-1},x_2^{-1},x_1^{-1},x_0^{-1}, x_3,y_3]$.\\
 \eqref{eq:45} amounts to saying that $1\in\gen{x_4}$ in $\gR[x_3^{-1}, y_3^{-1}, x_4]$.

 \medskip

 \noindent We use the fact that $\Kdim\gR[x_0,x_1,x_2,x_3,x_4] \leq 4$. We have a polynomial $P \in \gR[X_0,\allowbreak X_1,\allowbreak X_2,\allowbreak X_3,\allowbreak X_4]$ with
 trailing coefficient $1$ (for the lexicographic monomial order with $X_4>X_3>X_2>X_1 > X_0$) which vanishes at $(x_0,x_1,x_2,x_3,x_4)$. \\
 Let $X_4^{\ell}X_3^{n}X_2^{m}X_1^pX_0^{q}$ be the trailing monomial of $P$.  Dividing $P(x_0,x_1,x_2,x_3,x_4)$ by $x_4^{\ell}x_3^{m}x_2^nx_1^{p}x_0^{q}$, we obtain an equality
 \begin{multline*}
   1+ x_0f_0(x_0) + x_1 f_1(x_1,x_0^{\pm1}) + x_2 f_2(x_2,x_1^{\pm1},x_0^{\pm1}) +x_3 f_3(x_3,x_2^{\pm1},x_1^{\pm1},x_0^{\pm1})\\
   + x_4 f_4(x_4,x_3^{\pm1},x_2^{\pm1},x_1^{\pm1},x_0^{\pm1})=0\text,
 \end{multline*}
 where $f_0 \in \gR[X_0]$, $f_1 \in \gR[X_1,X_0^{\pm1}]$, $f_2 \in \gR[X_2,X_1^{\pm1},X_0^{\pm1}]$, $f_3 \in \gR[X_3,X_2^{\pm1},\allowbreak X_1^{\pm1},\allowbreak X_0^{\pm1}]$, and $f_4 \in \gR[X_4,X_3^{\pm1},X_2^{\pm1},X_1^{\pm1},X_0^{\pm1}]$:
 \begin{itemize}
 \item $x_0f_0(x_0)$ comes from monomials of $P$ other than $M$ whose degree in $X_4$ is
 equal to~$\ell$, the degree in $X_3$ is
 equal to $m$, the degree in $X_2$ is
 equal to $n$ and the degree in $X_1$ is
 equal to $p$,
 \item $x_1 f_1(x_1,x_0^{\pm1})$ comes from the other monomials of $P$ whose degree in $X_4$ is
 equal to~$\ell$, the degree in $X_3$ is
 equal to $m$, the degree in $X_2$ is
 equal to $n$,
 \item $x_2 f_2(x_2,x_1^{\pm1},x_0^{\pm1})$ comes from the other monomials of $P$ whose degree in $X_4$ is
 equal to~$\ell$ and the degree in $X_3$ is
 equal to $m$,

 \item $x_3 f_3(x_3,x_2^{\pm1},x_1^{\pm1},x_0^{\pm1})$ comes from the monomials of $P$ whose degree in $X_4$ is
 equal to~$\ell$,
 \item $x_4 f_4(x_4,x_3^{\pm1},x_2^{\pm1},x_1^{\pm1},x_0^{\pm1})$ comes from the monomials of $P$ whose degree in $X_4$ is
 $>\ell$.
 \end{itemize}
 For some $r_0,r_1,r_2,r_3 \in \mathbb{N}$, we have
 \begin{multline}
   1+ x_0f_0(x_0) + x_0^{r_0}\Bigl( x_1 g_1(x_1,x_0^{-1}) + {x_1^{r_1}}\Bigl(x_2g_2(x_2,x_1^{-1},x_0^{-1})\\
   +x_2^{r_2}\bigl(x_3g_3(x_3,x_2^{-1},x_1^{-1},x_0^{-1})+\frac{x_4}{x_3^{r_3}}\,g_4(x_4,x_3,x_2^{-1},x_1^{-1},x_0^{-1})\bigr)\Bigr)\Bigr)=0,\label{eq:18}
 \end{multline}
 where $g_1 \in \gR[X_1,X_0^{-1}]$, $g_2 \in \gR[X_2,X_1^{-1},X_0^{-1}]$, $g_3\in \gR[X_3,X_2^{-1},X_1^{-1},X_0^{-1}]$, and $g_4\in\gR[X_4,X_3,\allowbreak X_2^{-1},X_1^{-1},X_0^{-1}]$.
 \\
 Let $y_0= \dfrac{1+x_0f_0(x_0)}{x_0^{r_0}}$,
 $y_1=\dfrac{y_0+x_1 g_1(x_1,x_0^{-1})}{x_1^{r_1}}$, $y_2=\dfrac{y_1+x_2 g_2(x_2,x_1^{-1},x_0^{-1})}{x_2^{r_2}}$, $y_3=\dfrac{x_4}{x_3^{r_3}}$.
 \\
 Condition \eqref{eq:41}: $1= x_0^{r_0}y_0-x_0f_0(x_0)\in\gen{x_0,y_0}$ in $\gR[x_0,y_0]$ (even if $r_0=0$).
 \\
 Condition \eqref{eq:42}: $y_0=y_1x_1^{r_1}- x_1 g_1(x_1,x_0^{-1})$;
  if $y_0\neq 0$ we divide by $y_0$
 and we obtain $1\in\gen{x_1,y_1}$ in $\gR[x_1,y_1,x_0^{-1},y_0^{-1}]$.
 \\
 Condition \eqref{eq:43}: $y_1=y_2x_2^{r_2}- x_2 g_2(x_2,x_1^{-1},x_0^{-1})$;
  if $y_1\neq 0$ we divide by $y_1$
 and we obtain $1\in\gen{x_2,y_2}$ in $\gR[y_1^{-1},x_1^{-1},x_0^{-1}, x_2,y_2]$.
 \\
 Condition \eqref{eq:44}: Equality~\eqref{eq:18} reads
 \[
 x_3g_3(x_3,x_2^{-1},x_1^{-1},x_0^{-1})+y_3g_4(y_3x_3^{r_3},x_3,x_2^{-1},x_1^{-1},x_0^{-1})  = -y_2
 \]
 If $y_2\neq 0$ we divide by $y_2$ and get that $1\in\gen{x_3,y_3}$ in $\gR[y_2^{-1},x_2^{-1},x_1^{-1},x_0^{-1}, x_3,y_3]$.
 \\
 Condition \eqref{eq:45}: $ y_3x_3^{r_3}=x_4$ (and we divide by $y_3x_3^{r_3}$ if $y_3\neq 0$).

\medskip \noindent \fbox{$\vdim\leq n\implies\Vdim\leq n$}

\smallskip \noindent Let $x_0,\dots,x_n$ be nonzero in the field of fractions $\gK$ of $\gR$. 
We look for $\mathfrak u_0,\mathfrak u_1,\dots,\mathfrak u_n$ in the distributive lattice generated by the $V'(x)$ which form a complementary sequence of $x_0,x_1,\dots,x_n$. In other words: 
{\small\[
1=V'(x_0) \vee \mathfrak u_0,\, V'(x_0) \wedge \mathfrak u_0 \leq V'(x_1) \vee \mathfrak u_1,\ldots,\,V'(x_{n-1}) \wedge \mathfrak u_{n-1} \leq V'(x_n) \vee \mathfrak u_n ,\, V'(x_n) \wedge \mathfrak u_n=0.
\]
}

\noindent We use the fact that $\Kdim\gR[x_0,\ldots,x_n] \leq n$. We have a polynomial $P \in \gR[X_0,\ldots,X_n]$ with
trailing coefficient $1$ (for the lexicographic monomial order with $X_n>X_{n-1}> \cdots > X_0$) which vanishes at $(x_0,\ldots,x_n)$. Let $X_n^{q_n} \cdots X_0^{q_0}$ be the trailing monomial of~$P$.  By dividing $P(x_0,\ldots,x_n)$ by $x_n^{q_n} \cdots x_0^{q_0}$, we obtain an equality
\[1+ x_0f_0(x_0) + x_1 f_1(x_1,x_0^{\pm1}) + \ldots +
 x_n f_n(x_n,x_{n-1}^{\pm1},\ldots,x_0^{\pm1})=0,
\]
where $f_0\in \gR[X_0], \,f_1 \in \gR[X_1,X_0^{\pm1}],\ldots, \, f_n \in \gR[X_n,X_{n-1}^{\pm1},\ldots,X_0^{\pm1}]$.

\medskip

\noindent For some $r_0,\ldots,r_n \in \mathbb{N}$, we have
\begin{multline*}
1+ x_0f_0(x_0) + x_0^{r_0} \bigg( x_1 g_1(x_1,x_0^{-1}) +\cdots + x_{n-3}^{r_{n-3}} \Big( x_{n-2}g_{n-2}(x_{n-2},x_{n-3}^{-1},\ldots,x_0^{-1})\\
+ x_{n-2}^{r_{n-2}}\big(x_{n-1}g_{n-1}(x_{n-1},x_{n-2}^{-1},\ldots,x_0^{-1})+
\frac{x_n}{x_{n-1}^{r_{n-1}}}\,g_n(x_n,x_{n-1},x_{n-2}^{-1},\ldots x_0^{-1}) \cdots \big) \Big) \bigg) =0\text, 
\end{multline*}
where \(g_1 \in \gR[X_1,X_0^{-1}]\), \dots, \(g_{n-1}\in\gR[X_{n-1},X_{n-2}^{-1},\dots,X_0^r{-1}]\),
\(g_n\in\gR[X_n,\allowbreak X_{n-1}, \allowbreak X_{n-2}^{-1},\allowbreak \dots,X_0^{-1}]\).

\begin{sloppypar}
\noindent Let \(y_0= \dfrac{1+x_0f_0(x_0)}{x_0^{r_0}}\),
\(y_1=\dfrac{y_0+x_1 g_1(x_1,x_0^{-1})}{x_1^{r_1}}\), \dots, \(y_{n-2}=\dfrac{y_{n-3}+x_{n-2} g_{n-2}(x_{n-2},x_{n-3}^{-1},\dots,x_0^{-1})}{x_{n-2}^{r_{n-2}}}\), \(y_{n-1}=\dfrac{x_n}{x_{n-1}^{r_{n-1}}}\).

\smallskip \noindent It is then sufficient to take \(\mathfrak u_0=V'(y_0)\), \(\mathfrak u_1=V'(y_1)\wedge V'(x_0)\), \(\mathfrak u_2=V'(y_2)\wedge\allowbreak V'(x_1) \wedge\allowbreak  V'(x_0)\), \dots,
\(\mathfrak u_{n-2}=V'(y_{n-2})\wedge V'(x_{n-3}) \wedge \cdots \wedge V'(x_0)\), \(\mathfrak u_{n-1}=V'(y_{n-1})\), and $\mathfrak u_n=0$.

\end{sloppypar}

\subsection{\texorpdfstring{$\vdim=\Vdim$}{vdim=Vdim} in the general case}\label{subsec-vdim=Vdim2}

We note that when $\gR$ is a pp-ring, the ring $\Frac(\gR)$
is a reduced zero-dimensional ring. Moreover a discrete field is a reduced zero-dimensional ring in which any idempotent is equal to $0$ or $1$.

We then start with the following remark which follows from the elementary local-global machinery No.~1.
\begin{lemma} \label{Vdimqi}
  \begin{sloppypar}
Let $\gR$ be a pp-ring. Let us define $\Val(\gR):=\Val(\Frac(\gR),\gR)$ and $\Vdim(\gR):=\Kdim(\Val(\gR))$ as in \pref{eq01} by replacing $\gK$ by $\Frac(\gR)$.
Then we obtain the equality $\vdim(\gR)=\Vdim(\gR)$ as in the integral case.
\end{sloppypar}
\end{lemma}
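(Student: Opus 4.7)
The strategy is to adapt the proofs of Lemma~\ref{lem-Vdim<=vdim} and of the implication $\vdim\leq n\Rightarrow\Vdim\leq n$ from the integral case via the elementary local-global machinery No.~1. The key observation is that for a pp-ring $\gR$, the ring $\Frac(\gR)$ is reduced zero-dimensional, and for every $a\in\Frac(\gR)$ there is an idempotent $e_a$ such that $a$ is zero in $\Frac(\gR)/\langle e_a\rangle$ and invertible in $\Frac(\gR)[1/e_a]$. Hence whenever the integral-case proof uses the test ``$a=0$ or $a$ is invertible'' in the field of fractions, we split $\gR$ into the two components $\gR[1/e_a]$ and $\gR/\langle e_a\rangle$, which are again pp-rings, and we proceed in each.

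For the easy direction $\Vdim(\gR)\leq n\Rightarrow\vdim(\gR)\leq n$, the argument of Lemma~\ref{lem-Vdim<=vdim} transfers without modification: the proof proceeds via $\Kdim(\gR[\Xn])\leq 2n$, which is a property that is equally well-defined for a pp-ring, and Theorem~\ref{thValDim}.\ref{thValDim2} then gives $\vdim(\gR)\leq n$.

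For the converse $\vdim(\gR)\leq n\Rightarrow\Vdim(\gR)\leq n$, we follow the construction of a complementary sequence carried out in Subsection~\ref{subsec-vdimv=Vdim1}. Given $x_0,\dots,x_n\in\Frac(\gR)$, the collapse $\Kdim\gR[x_0,\dots,x_n]\leq n$ (valid by Theorem~\ref{thValDim}.3 for pp-rings) still produces the same algebraic equality, from which we extract candidates $y_0,\dots,y_{n-1}$ in $\Frac(\gR)$ and a candidate complementary sequence $\mathfrak u_0,\dots,\mathfrak u_n$ built from $\Vp(y_i)$ and $\Vp(x_i)$. Each verification in the integral case hinges on a step of the form ``if $y_i\neq 0$ we divide by $y_i$''; in the pp-ring setting we replace this step by splitting along the idempotent $e_{y_i}\in\Frac(\gR)$. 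In the component where $y_i=0$ the required inequality of $\Val$ is automatic (as pointed out after~(\ref{eqV'})), while in the component where $y_i$ is invertible the integral-case computation goes through verbatim.

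The main obstacle is to ensure that the pieces obtained componentwise reassemble into a single complementary sequence in $\Val(\gR)$, rather than merely in each component lattice. This is guaranteed because the relations defining $\vdash_{\Val\gR}$ in~(\ref{eqVr2}) are purely equational: an equality of polynomials witnessed in each of finitely many product components $\gR[1/e]\times\gR/\langle e\rangle$ reads back as the same equality in~$\gR$. Hence the constructively extracted witnesses $Q,P_j$ in each component combine via the Chinese remainder decomposition to witnesses in $\gR$, and the desired complementary sequence for $\Val(\gR)$ is obtained. By Lemma~\ref{lem-trdi-spec-dim}, this yields $\Vdim(\gR)\leq n$.
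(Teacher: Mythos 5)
Your proof is correct and takes essentially the same approach as the paper, which simply states that the lemma follows from the elementary local-global machinery No.~1 applied to the integral-case arguments; you have usefully fleshed out what that machinery does here, in particular noting that the equational form of~\pref{eqVr2} (allowing zero arguments) is what lets the componentwise witnesses reassemble over the idempotent decomposition.
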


In the remainder of this paragraph, we do not give proofs:
we refer to the general study \citealt{LM2022}.

\smallskip
We define a dynamical theory \sa{val} as follows. We consider the signature
\[
  (\cdot\di\cdot\mkern1mu; \cdot+\cdot, \cdot\times\cdot,-\,\cdot,0,1)\text.
\]
The axioms are as follows.

\DeuxRegles
{
\lab{Col$_{\val}$} $\,\, 0\di 1 \vd \Bot$ \quad (collapse)
\Lab{av1} $\vd 1 \di -1$
\Lab{av2} $\,\,a \di  b \Vd ac \di  bc$
\Lab{Av1} $\,\,a \di  b \tsbf{,}\, b \di  c \Vd a \di  c$
}
{
\Lab{Av2} $\,\,a \di  b \tsbf{,}\, a \di  c \Vd a \di  b + c$
\Lab{AV1} $\vd a \di  b \MA{\tsbf{ or }} b\di a$
\Lab{AV2} $\,\,ax \di  bx  \Vd a \di  b \MA{\tsbf{ or }} 0 \di x$
\item[\vspace{\fill}]}

The equality $x=0$ is defined as an abbreviation for $x\di 0$.

\begin{definition} \label{defivalkK}~
\begin{enumerate}
\item If $\gR$ is a commutative ring, the dynamical algebraic structure $\sa{val}(\gR)$ is obtained by taking as presentation the positive diagram of the ring $\gR$.
\item If $\gk\subseteq \gR$ are two rings,\footnote{We use $\gk$ as notation for the small ring in order to invoke the intuition provided by the frequent situation where $\gk$ is a discrete field.
} or more generally if $\varphi\colon\gk\to\gR$ is an algebra, we denote by $\sa{val}(\gR,\gk)$ the dynamical algebraic structure whose presentation is given by
\begin{itemize}
\item the positive diagram of~$\gR$ as a commutative ring;
\item the axioms
$\vd 1 \di \varphi(x)$ for the elements $x$ of $\gk$.
%
\end{itemize}

%
%
\end{enumerate}
\end{definition}

The two dynamical algebraic structures $\sa{val}(\gR)$ and $\sa{val}(\gR,\gZ)$, where $\gZ$ is the smallest subring of $\gR$, are canonically isomorphic.

\begin{definition} \label{defivalkK0}
Let $\gk$ be a subring of a ring $\gR$.
We define the distributive lattice $\val(\gR,\gk)$ through the entailment relation 
 $\vdash_{\gR,\gk,\mathrm{val}}$ on the set $\gR\times\gR$ given by the following equivalence. 
\vspace{-.8em}
\begin{equation} \label {eqvalkK}
\begin{aligned} 
 (a_1,b_1),\dots,(a_n,b_n) &\,\vdash_{\gR,\gk,\mathrm{val}}\, (c_1,d_1),\dots,(c_m,d_m)\\ 
\equidef\quad  a_1\di b_1\tsbf,\dots\tsbf,\, a_n\di b_n &\Vdi{\sA{val}(\gR,\gk)} c_1\di d_1\tsbf{ or } \dots\tsbf{ or } c_m\di d_m\text.
 \end{aligned}
\end{equation}
\end{definition}

\smallskip
The following result can be proved.
\begin{lemma} \label{lemvalVal}
Let $\gR$ be an integral ring with field of fractions~$\gK$. We have natural morphisms $\Val(\gK,\gR)\to\val(\gK,\gR)$ and $\val(\gR,\gR)\to\val(\gK,\gR)$. These are isomorphisms.
\end{lemma}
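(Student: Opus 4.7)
The plan is to show that both natural morphisms are surjective on generators and then to establish injectivity via conservativity arguments grounded in Valuativstellens\"atze. The underlying idea is that each of the three distributive lattices captures the same class of valuation rings of $\gK$ containing $\gR$, so no new entailments should arise either when enriching the signature (passing from the unary predicate $\Vr$ to the binary divisibility relation) or when enlarging the ambient ring (passing from $\gR$ to $\gK$).

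For the first morphism $\Val(\gK,\gR)\to\val(\gK,\gR)$, I would send the generator $\Vr(x)$ to the generator $(1\di x)$. That this respects the defining axioms of $\sa{Val}(\gK,\gR)$ amounts to deriving their images in $\sa{val}(\gK,\gR)$: the inclusion $\vd\Vr(a)$ for $a\in\gR$ maps to the axiom $\vd 1\di a$; closure of $\Vr$ under sums follows from \tsbf{Av2}; closure under products is obtained by combining \tsbf{av2} with \tsbf{Av1}; and the axiom $xy=1\vd\Vr(x)\MA{\tsbf{ or }}\Vr(y)$ reflects the totality axiom \tsbf{AV1}. Conversely every generator $a\di b$ of $\val(\gK,\gR)$ lies in the image of this map at the lattice level: if $a\neq 0$ then \tsbf{av2} together with the invertibility of $a$ in $\gK$ yields the equality $a\di b=1\di(b/a)$, while if $a=0$ then $0\di b$ collapses to $\Bot$ when $b\neq 0$ (by the collapse rule applied after $0\di 1$) and equals $\Top$ when $b=0$.

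For the second morphism $\val(\gR,\gR)\to\val(\gK,\gR)$, the inclusion $\gR\hookrightarrow\gK$ guarantees that the positive diagram of $\gR$ embeds into that of $\gK$, and that the axioms $\vd 1\di x$ of $\sa{val}(\gR,\gR)$ are among the axioms of $\sa{val}(\gK,\gR)$, so that the map $(a,b)\mapsto(a,b)$ on generators is well defined. For surjectivity one clears denominators: any pair $(a,b)\in\gK\times\gK$ can be written $a=\alpha/\gamma$ and $b=\beta/\gamma$ with $\alpha,\beta,\gamma\in\gR$ and $\gamma\neq 0$; then \tsbf{av2} gives $a\di b\vd\alpha\di\beta$, while \tsbf{AV2} gives $\alpha\di\beta\vd a\di b\MA{\tsbf{ or }}0\di\gamma$, the second disjunct being ruled out by the invertibility of $\gamma$ in $\gK$ together with the collapse rule $0\di 1\vd\Bot$.

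The main obstacle is injectivity of both morphisms, which demands that no new entailments appear when moving to the larger theory. For the first morphism, one matches the Valuativstellensatz \pref{eqVr2} for $\sa{Val}(\gK,\gR)$ with a parallel algebraic characterisation of entailments between formulas $1\di x$ in $\sa{val}(\gK,\gR)$: both reduce to the same ideal membership condition in $\gR[\xn,y_1^{-1},\dots,y_m^{-1}]$, whence the two entailment relations coincide on the $\Vr$-fragment. For the second morphism, the argument is a normal-form procedure on dynamical proofs: every occurrence in a derivation of an element of $\gK\setminus\gR$ is replaced by a representative $\alpha/\gamma$ with $\alpha,\gamma\in\gR$, and the resulting divisibility statements are cleared of denominators by iterated use of \tsbf{av2} and \tsbf{AV2}, so as to convert any proof in $\sa{val}(\gK,\gR)$ of an entailment in the sublanguage of $\gR$ into a proof in $\sa{val}(\gR,\gR)$. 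This conservativity step is the technically delicate part of the argument, and fits into the general framework of \citealt{LM2022}.
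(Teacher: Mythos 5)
The paper does not prove this lemma: just before introducing the theory $\sa{val}$ it states explicitly "In the remainder of this paragraph, we do not give proofs: we refer to the general study \citealt{LM2022}." Your proposal therefore supplies content the paper omits, and must be assessed on its own.

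Your description of the morphisms and the verification of surjectivity are both correct. Sending $\Vr(x)\mapsto(1\di x)$ respects the axioms of $\sa{Val}(\gK,\gR)$: the diagram of $\gR$ maps to the axioms $\vd 1\di a$; closure under sums is \tsbf{Av2}; closure under products follows from \tsbf{av2} with $c=y$ followed by \tsbf{Av1}; and totality \tsbf{AV1} together with \tsbf{av2} gives the image of the disjunctive axiom for inverses. The identification of $a\di b$ with $1\di(b/a)$ for $a\neq 0$ via \tsbf{av2} in both directions, together with the degenerate cases for $a=0$, correctly shows every generator of $\val(\gK,\gR)$ lies in the image. The second morphism is surjective on generators by clearing denominators with \tsbf{av2} and \tsbf{AV2}, discharging the disjunct $0\di\gamma$ via invertibility of $\gamma$ in $\gK$ and the collapse rule.

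What remains is injectivity, which you rightly identify as the crux, and here there is a genuine gap. For $\Val(\gK,\gR)\to\val(\gK,\gR)$, you would need a Valuativstellensatz for the $\{1\di x\}$-fragment of $\sa{val}(\gK,\gR)$ reproducing exactly the ideal-membership condition of \pref{eqVr2}; you assert that the two characterisations "reduce to the same ideal membership condition" but give no argument. For $\val(\gR,\gR)\to\val(\gK,\gR)$, the claimed normal-form transformation on dynamical derivations is plausible but nontrivial, since intermediate steps of a derivation in $\sa{val}(\gK,\gR)$ may genuinely use constants outside $\gR$, and the denominator-tracking needed to eliminate them is precisely the content of a conservativity theorem. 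These two conservativity results are the actual mathematical substance of the lemma. Since the paper itself defers to \citealt{LM2022} at exactly this point, deferring is not incorrect; but your write-up presents these steps as sketched arguments rather than as cited results, which overstates what has been established.
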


The following definition is therefore reasonable. We shall see that it coincides with the one given in Lemma~\ref{Vdimqi} in the case of pp-rings.

\begin{definition} \label{defiVdimgnle}
Let $\gR$ be an arbitrary commutative ring. We define $\Vdim(\gR)\leq n$ by
$\Kdim(\val(\gR,\allowbreak\gR))\leq n$.
\end{definition}

This dimension coincides with the one already defined when $\gR$ is integral. But it is not in general equal to the dimension of the lattice $\val(\Frac(\gR),\gR)$.

From our point of view, this means that $\Frac(\Rmin)$ is a much better substitute than $\Frac(\gR)$ for the field of fractions when $\gR$
is not an integral ring. In fact $\Rmin$ coincides with $\Frac(\gR)$
only for pp-rings.

Finally, we can prove the following theorem.

\begin{theorem} \label{thvdimAmin}
The distributive lattices \[\val(\gR,\gR) \text{ and }
 \val(\Rmin,\Rmin)\simeq\val(\Frac(\Rmin),\Rmin)\]
have the same Krull dimension.
\end{theorem}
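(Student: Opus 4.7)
The plan is to leverage the two key properties of $\Rmin$: it is a pp-ring (so the pp-ring calculus of Lemma \ref{Vdimqi} is available) and it is integral over $\gR$ (so valuations of $\Rmin$ correspond to valuations of $\gR$). The right-hand isomorphism $\val(\Rmin,\Rmin)\simeq\val(\Frac(\Rmin),\Rmin)$ I would obtain by extending Lemma \ref{lemvalVal} from the integral case to the pp-ring case via the pp-ring splitting lemma (Lemma \ref{thScindageQi}) and the elementary local-global machinery No.~1: $\Rmin$ splits branch-by-branch into integral components on which Lemma \ref{lemvalVal} applies directly, and the pieces reassemble to the claimed isomorphism. Combined with Lemma \ref{Vdimqi} and Definition \ref{defiValdim}.2, this yields $\Kdim(\val(\Rmin,\Rmin))=\Vdim(\Rmin)=\vdim(\Rmin)=\vdim(\gR)$.

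The core step is then to prove $\Kdim(\val(\gR,\gR))=\Kdim(\val(\Rmin,\Rmin))$. The inclusion $\gR\hookrightarrow\Rmin$ induces a morphism of dynamical algebraic structures $\sa{val}(\gR)\to\sa{val}(\Rmin)$, hence a morphism $\val(\gR,\gR)\to\val(\Rmin,\Rmin)$ of distributive lattices, and I would show that this morphism preserves Krull dimension by establishing the constructive analogue of the classical fact that valuation rings are integrally closed in their fraction fields. Concretely: for every $r\in\Rmin$ with integral dependence $r^k+a_{k-1}r^{k-1}+\cdots+a_0=0$ over $\gR$, the judgement $\vd 1\di r$ of $\sa{val}(\Rmin,\Rmin)$ is derivable in $\sa{val}(\gR,\gR)$ augmented by this polynomial relation. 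Granted this, a translation of formal derivations yields a correspondence, up to integrality data, between complementary sequences (in the sense of Theorem \ref{th-dico-trdi-spec-dim1} and Lemma \ref{lem-trdi-spec-dim}) on the two sides, and hence an equality of Krull dimensions.

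The main obstacle is encoding the classical ``divide the integral relation by $r^k$'' argument within the purely divisibility-based theory $\sa{val}$, which lacks both field axioms and explicit inversion. One case-splits using axiom \tsbf{AV1} on $1\di r$ versus $r\di 1$; the first case is immediate, while the second requires simulating division using axioms \tsbf{av2}, \tsbf{Av2}, and in particular \tsbf{AV2} (cancellation modulo a zero element) in order to propagate $r\di 1$ through the integral relation and conclude $1\di r$ without invoking any inverse for $r$. This bookkeeping is the delicate technical piece that the authors defer to \citealt{LM2022}, and it is presumably also the place where the explicit description of the pp-closure in Lemma \ref{lem4MorRc} becomes essential for writing out the concrete derivation trees.
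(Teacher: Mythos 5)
The paper does not prove this theorem; it explicitly defers to the general study by Lombardi and Mahboubi (``we do not give proofs: we refer to the general study''), so there is no internal argument to compare against. Taken on its own terms, your outline assembles the right ingredients but leaves the decisive step unproved.

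Reducing to $\Kdim(\val(\gR,\gR))=\Kdim(\val(\Rmin,\Rmin))$ and exploiting integrality of $\Rmin$ over $\gR$ is sound, and the \tsbf{AV1}/\tsbf{AV2} case split you sketch does yield $\vd 1\di r$ for $r$ integral over $\gR$. But this only shows that the extra axioms $\vd 1\di r$ (for $r\in\Rmin$) are redundant over $\sa{val}(\Rmin,\gR)$; it does not address how the genuinely new atoms $a'\di b'$ with $a',b'\in\Rmin\setminus\gR$ relate to the $\gR$-atoms, and that comparison is exactly what is needed to transport complementary sequences between $\val(\gR,\gR)$ and $\val(\Rmin,\Rmin)$. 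Your ``translation of formal derivations yields a correspondence \dots\ between complementary sequences'' is where the theorem actually lives, and as written it is an assertion, not a proof.

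The missing idea is the dynamical collapse of idempotents. Since $\Rmin=\gR[(e_a)_{a\in\gR}]$ (Lemma~\ref{lem2qi}), the new constants are $\gR$-polynomials in idempotents $e$. From $e(1-e)=0$ in $\Rmin$, the judgement $0\cdot(1-e)\di e\cdot(1-e)$ is trivially derivable (both terms equal the constant $0$ via the diagram of $\Rmin$), and axiom \tsbf{AV2} applied with $a=0$, $b=e$, $x=1-e$ yields the derivable disjunction $0\di e$ \tsbf{or} $0\di 1-e$, i.e.\ $e=0$ or $e=1$. This resolution---not the bare $\vd 1\di r$---is what lets each new atom $a'\di b'$ be eliminated, after finitely many dynamical case splits tracked by Lemma~\ref{lem4MorRc}, in favour of atoms over $\gR$, so that the image of $\val(\gR,\gR)$ generates $\val(\Rmin,\Rmin)$ and Lemma~\ref{lem-trdi-spec-dim} applies to transfer the Krull dimension. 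For the opposite inequality you additionally need faithfulness of $\val(\gR,\gR)\to\val(\Rmin,\Rmin)$, a point also left implicit in your sketch.
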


With Lemma~\ref{Vdimqi} and Definitions~\ref{defivalkK} and \ref{defiVdimgnle} this completes the work.

\smallskip \noindent \textsl{Note}. We could have defined $\Vdim(\gR)=\Vdim(\Rmin)$ directly without using the 
theory \sa{val}, but this would have been an ad hoc definition, because $\Vdim(\gR)$ has no direct natural definition for an arbitrary ring if we use
only the theory \sa{Val}.

\normalsize
\endgroup
\stopcontents[english]

\clearpage
\newpage
\thispagestyle{empty}

~

\clearpage
\newpage

\renewcommand\thepage{F\arabic{page}}\renewcommand\theHsection{F\arabic{section}}
\begingroup

\clearpage
\setcounter{page}{1} 

\selectlanguage{french}
\def\frenchproofname{\textsl{Démonstration}}

\FrenchFootnotes



\theoremstyle{plain}
\newtheorem{ftheorem}{Théorème}[section]
\newtheorem{fthdef}[ftheorem]{Théorème et définition}
\newtheorem{fpstf}[ftheorem]{Positivstellensatz formel}
\newtheorem{fpst}[ftheorem]{Positivstellensatz}
\newtheorem{flemma}[ftheorem]{Lemme}
\newtheorem{fcorollary}[ftheorem]{Corolaire}
\newtheorem{fconjecture}[ftheorem]{Conjecture}
\newtheorem{fproposition}[ftheorem]{Proposition}
\newtheorem{fpbu}[ftheorem]{Problème universel}
\newtheorem{fprpta}[ftheorem]{Propriétés attendues}
\newtheorem{fpropdef}[ftheorem]{Proposition et définition}
\newtheorem{ffact}[ftheorem]{Fait}
\newtheorem{fplcc}[ftheorem]{Principe local-global concret}

\newtheorem{ftheoremc}[ftheorem]{Th\'{e}or\`{e}me\etoz}
\newtheorem{flemmac}[ftheorem]{Lemme\etoz}
\newtheorem{fcorollaryc}[ftheorem]{Corolaire\etoz}
\newtheorem{fproprietec}[ftheorem]{Propri\'{e}t\'{e}\etoz}
\newtheorem{fpropositionc}[ftheorem]{Proposition\etoz}
\newtheorem{ffactc}[ftheorem]{Fait\etoz}
\newtheorem{fvalsatz}[ftheorem]{\vst}

\theoremstyle{definition}
\newtheorem{fconvention}[ftheorem]{Convention}
\newtheorem{fdefinition}[ftheorem]{Définition}
\newtheorem{fdfni}[ftheorem]{Définition informelle}
\newtheorem{fdefinitions}[ftheorem]{Définitions}
\newtheorem{fnotation}[ftheorem]{Notation}
\newtheorem{fproblem}[ftheorem]{Problème}
\newtheorem{fquestion}[ftheorem]{Question}
\newtheorem{fquestions}[ftheorem]{Questions}
\newtheorem{fcontext}[ftheorem]{Contexte}
\newtheorem{fdefinitionc}[ftheorem]{Définition\etoz}
\newtheorem{fdefinota}[ftheorem]{Définition et notation}

\theoremstyle{remark}
\newtheorem{fexample}[ftheorem]{Exemple}
\newtheorem{fexamples}[ftheorem]{Exemples}
\newtheorem{fnotes}[ftheorem]{Notes}
\newtheorem{fremark}[ftheorem]{Remarque}
\newtheorem{fremarks}[ftheorem]{Remarques}
\newtheorem{fcomment}[ftheorem]{Commentaire}



\newcommand{\vou}{\MA{\tsbf{ ou }}}
\newcommand{\Vou}{\MA{\tsbf{OU}}}
\newcommand \EXists[1] {\tsbf{Introduire }{#1}\tsbf{ tel que }\,}
\newcommand \Atcl {\mathrm{Atcl}}
\newcommand \Tcl {\mathrm{Tcl}}
\newcommand \Atclv {\mathrm{Atclv}}
\newcommand \vet {{\tsbf{,}}\,}
\newcommand \tcl {\Tcl}

\newcommand \num {{n$^{\mathrm{ o}}$}}

\renewcommand\equidef{\buildrel{\textrm{déf}}\over{\;\Longleftrightarrow\;}}
\renewcommand\eqdef{\buildrel{\textrm{déf}}\over {\;=\;}}
\renewcommand\eqdefi{\eqdef}



\newcounter{MF}
\newcommand\stMF{\stepcounter{MF}}

\newcommand{\lec}{\stMF\ifodd\value{MF}lecteur\xspace\else 
lectrice\xspace\fi}

\newcommand{\lecs}{\stMF\ifodd\value{MF}lecteurs\xspace\else 
lectrices\xspace\fi}

\newcommand{\alec}{\stMF\ifodd\value{MF}au lecteur\xspace\else%
à la lectrice\xspace\fi}

\newcommand{\dlec}{\stMF\ifodd\value{MF}du lecteur\xspace\else%
de la lectrice\xspace\fi}

\newcommand{\llec}{\stMF\ifodd\value{MF}le lecteur\xspace\else la lectrice\xspace\fi}

\newcommand{\Llec}{\stMF\ifodd\value{MF}Le lecteur\xspace\else La lectrice\xspace\fi}

\newcommand{\lui}{\ifodd\value{MF}lui\xspace\else
elle\xspace\fi}

\newcommand{\celui}{\ifodd\value{MF}celui\xspace\else
celle\xspace\fi}

\newcommand{\ceux}{\ifodd\value{MF}ceux\xspace\else
celles\xspace\fi}

\newcommand{\er}{\ifodd\value{MF}er\xspace\else
ère\xspace\fi}

\newcommand{\eux}{\ifodd\value{MF}eux\xspace\else
elles\xspace\fi}

\newcommand{\eUx}{\ifodd\value{MF}eux\xspace\else
euse\xspace\fi}

\newcommand{\eUX}{\ifodd\value{MF}eux\xspace\else
euses\xspace\fi}

\newcommand{\leux}{\ifodd\value{MF}leux\xspace\else
leuse\xspace\fi}

\newcommand{\il}{\ifodd\value{MF}il\xspace\else
elle\xspace\fi}

\newcommand{\ien}{\ifodd\value{MF}ien\xspace\else
ienne\xspace\fi}

\newcommand{\iens}{\ifodd\value{MF}iens\xspace\else
iennes\xspace\fi}

\newcommand{\e}{\ifodd\value{MF}\xspace \else e\xspace\fi}

\newcommand{\n}{\ifodd\value{MF}n\xspace\else nne\xspace\fi}

\makeatletter
\newcommand{\la}{\@ifstar{\ifodd\value{MF}le\else
la\fi}{\stMF\ifodd\value{MF}le\else la\fi}}
\makeatother

\newcommand \rem{\rdb
\noi{\sl Remarque. }}

\newcommand \REM[1]{\rdb
\noi{\sl Remarque#1. }}

\newcommand \rems{\rdb
\noi{\sl Remarques. }}

\newcommand \exl{\rdb
\noi{\bf Exemple. }}

\newcommand \EXL[1]{\rdb
\noi{\bf Exemple: #1. }}

\newcommand \exls{\rdb
\noi{\bf Exemples. }}

\newcommand \thref[1] {théorème~\ref{#1}}

\newcommand\oge{\leavevmode\raise.3ex\hbox{$\scriptscriptstyle\langle\!\langle\,$}}
\newcommand\feg{\leavevmode\raise.3ex\hbox{$\scriptscriptstyle\,\rangle\!\rangle$}}

\newcommand\gui[1]{\oge{#1}\feg}

\newcommand \facile{\begin{proof}
La démonstration est laissée \alec.
\end{proof}
}

\newcommand\comm{\rdb
\noi{\sl Commentaire. }}

\newcommand\COM[1]{\rdb
\noi{\sl Commentaire #1. }}

\newcommand\comms{\rdb
\noi{\sl Commentaires. }}

\newcommand\Pb{\rdb
\noi{\bf Problème. }}

\newcommand\eoq{\hbox{}\nobreak
\vrule width 1.4mm height 1.4mm depth 0mm}

\newcommand \Cad {C'est-à-dire\xspace}
\newcommand \recu {récur\-rence\xspace}
\newcommand \hdr {hypo\-thèse de \recu}
\newcommand \cad {c'est-à-dire\xspace}
\newcommand \cade {c'est-à-dire en\-co\-re\xspace}
\newcommand \ssi {si, et seu\-lement si,\xspace}
\newcommand \cnes {con\-di\-tion néces\-sai\-re et suf\-fi\-san\-te\xspace}
\newcommand \spdg {sans per\-te de géné\-ra\-lité\xspace}
\newcommand \Spdg {Sans per\-te de géné\-ra\-lité\xspace}

\newcommand \Propeq {Les pro\-pri\-é\-tés sui\-van\-tes sont 
équi\-va\-len\-tes.}
\newcommand \propeq {les pro\-pri\-é\-tés sui\-van\-tes sont 
équi\-va\-len\-tes.}



\newcommand \Ali {appli\-ca\-tion $\gA$-\lin}
\newcommand \Alis {appli\-ca\-tions $\gA$-\lins}

\newcommand \Kli {appli\-ca\-tion $\gK$-\lin}
\newcommand \Klis {appli\-ca\-tions $\gK$-\lins}

\newcommand \Bli {appli\-ca\-tion $\gB$-\lin}
\newcommand \Blis {appli\-ca\-tions $\gB$-\lins}

\newcommand \Cli {appli\-ca\-tion $\gC$-\lin}
\newcommand \Clis {appli\-ca\-tions $\gC$-\lins}

\newcommand \ac{algé\-bri\-quement clos\xspace}  

\newcommand \acl {an\-neau \icl}
\newcommand \acls {an\-neaux \icl}

\newcommand \adp {an\-neau de Pr\"u\-fer\xspace}
\newcommand \adps {an\-neaux de Pr\"u\-fer\xspace}

\newcommand \adpc {\adp \coh}
\newcommand \adpcs {\adps \cohs}

\newcommand \adu {\alg de décom\-po\-sition univer\-selle\xspace}
\newcommand \adus {\algs de décom\-po\-sition univer\-selle\xspace}

\newcommand \adv {anneau de valuation\xspace}
\newcommand \advs {anneaux de valuation\xspace}

\newcommand \advl {anneau \dvla} 
\newcommand \advls {anneaux \dvlas} 

\newcommand \Afr {Anneau \frl}
\newcommand \Afrs {Anneaux \frls}
\newcommand \afr {anneau \frl}
\newcommand \afrs {anneaux \frls}

\newcommand \aFr {\hyperref[theorieAfr]{anneau \frl}}
\newcommand \aFrs {\hyperref[theorieAfr]{anneau \frls}}

\newcommand \afrr {\afr réduit\xspace}
\newcommand \afrrs {\afrs réduits\xspace}
\newcommand \Afrrs {\Afrs réduits\xspace}

\newcommand \afrvr {\afr avec \ravs}
\newcommand \aFrvr {\hyperref[theorieAfrrv]{\afrvr}}
\newcommand \afrvrs {\afrs avec \ravs}

\newcommand \aftr {anneau réticulé \ftm réel\xspace}
\newcommand \aftrs {anneaux réticulés \ftm réels\xspace}

\newcommand \aG {\alg galoisienne\xspace}
\newcommand \aGs {\algs galoisiennes\xspace}

\newcommand \agB {\alg de Boole\xspace}
\newcommand \agBs {\algs de Boole\xspace}

\newcommand \agH {\alg de Heyting\xspace}
\newcommand \agHs {\algs de Heyting\xspace}

\newcommand \agq{algé\-bri\-que\xspace}
\newcommand \agqs{algé\-bri\-ques\xspace}

\newcommand \agqt{algé\-bri\-que\-ment\xspace}

\newcommand \aKr {anneau de Krull\xspace}
\newcommand \aKrs {anneaux de Krull\xspace}

\newcommand \alg {algè\-bre\xspace}
\newcommand \algs {algè\-bres\xspace}

\newcommand \algo{algo\-rithme\xspace}
\newcommand \algos{algo\-rithmes\xspace}

\newcommand \algq{al\-go\-rith\-mi\-que\xspace}
\newcommand \algqs{al\-go\-rith\-mi\-ques\xspace}

\newcommand \ali {appli\-ca\-tion \lin}
\newcommand \alis {appli\-ca\-tions \lins}

\newcommand \alo {an\-neau lo\-cal\xspace}
\newcommand \alos {an\-neaux lo\-caux\xspace}

\newcommand \algb {an\-neau \lgb}
\newcommand \algbs {an\-neaux \lgbs}

\newcommand \alrd {\alo \dcd}
\newcommand \alrds {\alos \dcds}

\newcommand \anar {anneau \ari}
\newcommand \anars {anneaux \aris}

\newcommand \anor {an\-neau nor\-mal\xspace}
\newcommand \anors {an\-neaux nor\-maux\xspace}

\newcommand \apf {\alg \pf}
\newcommand \apfs {\algs \pf}

\newcommand \apG {\alg pré\-galoisienne\xspace}
\newcommand \apGs {\algs pré\-galoisiennes\xspace}

\newcommand \arc {anneau réel clos\xspace}
\newcommand \aRc {\hyperref[theorieArc]{\arc}}
\newcommand \arcs {anneaux réels clos\xspace}

\newcommand \ari{arith\-mé\-tique\xspace}  
\newcommand \aris{arith\-mé\-tiques\xspace}  

\newcommand \Asr {Anneau \str}
\newcommand \Asrs {Anneaux \strs}
%
%
%
%
%
%
%
%
%
%
\newcommand \bif {borne infé\-rieure\xspace} %
%
%
%
%
%
\newcommand \cara{carac\-té\-ris\-tique\xspace}  
\newcommand \caras{carac\-té\-ris\-tiques\xspace}  
\newcommand \carn{carac\-té\-ri\-sation\xspace}  
\newcommand \carns{carac\-té\-ri\-sations\xspace}  
\newcommand \carar{carac\-té\-riser\xspace}
\newcommand \cdac{\cdi \ac}  
\newcommand \cdacs{\cdis \ac}  

\newcommand \cdi{corps discret\xspace}
\newcommand \cdis{corps discrets\xspace}
\newcommand \cdf{corps de fractions\xspace}
\newcommand \coe {coef\-fi\-cient\xspace}
\newcommand \coes {coef\-fi\-cients\xspace}
\newcommand \coh {co\-hé\-rent\xspace}
\newcommand \cohe {co\-hé\-rente\xspace}
\newcommand \cohs {co\-hé\-rents\xspace}
\newcommand \com {co\-ma\-xi\-maux\xspace}
\newcommand \come {co\-ma\-xi\-males\xspace}
\newcommand \cop {complé\-men\-taire\xspace}
\newcommand \cops {complé\-men\-taires\xspace}
\newcommand \crcd {corps réel clos discret\xspace}
\newcommand \crcds {corps réels clos discrets\xspace}
\newcommand \cqi {clô\-ture \qi} 
\newcommand \cqis {clô\-tures \qis} 
\newcommand \cvd{corps valué discret\xspace}
\newcommand \cvds{corps valués discrets\xspace}
%
%
%
%
\newcommand \dcd {rési\-duel\-lement dis\-cret\xspace}
\newcommand \dcds {rési\-duel\-lement dis\-crets\xspace}
\newcommand \ddk {dimension de~Krull\xspace}
\newcommand \ddi {de dimension infé\-rieure ou égale à~}
\newcommand \demo{démon\-stra\-tion\xspace}     
\newcommand \demos{démon\-stra\-tions\xspace}     
\newcommand \deno{déno\-mi\-nateur\xspace}     
\newcommand \denos{déno\-mi\-nateurs\xspace}   
\newcommand \dfn{défi\-nition\xspace}  
\newcommand \dfns{défi\-nitions\xspace}  
\newcommand \dij{disjonc\-tive\xspace}  
\newcommand \dijs{disjonc\-tives\xspace}  
\newcommand \dok {domaine de Dedekind\xspace}
\newcommand \doks {domaines de Dedekind\xspace}
\newcommand \dvz {di\-viseur de zéro\xspace}
\newcommand \dvzs {di\-viseurs de zéro\xspace}
\newcommand \dve {divisi\-bi\-lité\xspace}
%
%
%
%
%
\newcommand \eco {\elts \com}
\newcommand \egmt {éga\-lement\xspace}
\newcommand \egt {éga\-lité\xspace}
\newcommand \egts {éga\-lités\xspace}
\newcommand \elr{élé\-men\-taire\xspace}  
\newcommand \elrs{élé\-men\-taires\xspace}  
\newcommand \elt{élé\-ment\xspace}  
\newcommand \elts{élé\-ments\xspace}  
\newcommand \entrel {rela\-tion impli\-ca\-tive\xspace}
\newcommand \entrels {rela\-tions impli\-ca\-tives\xspace}
\newcommand \eqn {équation\xspace}  
\newcommand \eqns {équations\xspace}  
\newcommand \eqv {équi\-valent\xspace}  
\newcommand \eqve {équi\-va\-lente\xspace}  
\newcommand \eqvs {équi\-valents\xspace}  
\newcommand \eqves {équi\-val\-entes\xspace}  
\newcommand \eqvc {équi\-va\-lence\xspace}  
\newcommand \eqvcs {équi\-va\-lences\xspace}  
\newcommand \esid {essen\-tiel\-lement iden\-tique\xspace}  
\newcommand \esids {essen\-tiel\-lement iden\-tiques\xspace}  
\newcommand \eseq {essen\-tiel\-lement \eqve}  
\newcommand \eseqs {essen\-tiel\-lement \eqves}  
\newcommand\evc{es\-pa\-ce vec\-to\-riel\xspace} 
\newcommand\evcs{es\-pa\-ces vec\-to\-riels\xspace} 
%
%
%
%
%
%
%
%
%
%
%
%
%
%
%
%
\newcommand\gmq{géomé\-trique\xspace}  
\newcommand\gmqs{géomé\-triques\xspace}  
\newcommand\gnl{géné\-ral\xspace}  
\newcommand\gnle{géné\-rale\xspace}  
\newcommand\gnls{géné\-raux\xspace}  
\newcommand\gnles{géné\-rales\xspace}  
\newcommand\gnlt{géné\-ra\-lement\xspace}  
\newcommand\gnn{géné\-ra\-li\-sa\-tion\xspace}  
\newcommand\gnns{géné\-ra\-li\-sa\-tions\xspace}  
\newcommand\gnq {géné\-rique\xspace}  
\newcommand\gnqs {géné\-riques\xspace}  
\newcommand \gnt{géné\-ra\-lité\xspace}
\newcommand \gnts{géné\-ra\-lités\xspace}
\newcommand\gtr{géné\-ra\-teur\xspace}  
\newcommand\gtrs{géné\-ra\-teurs\xspace}  
%
%
\newcommand \homo {homo\-mor\-phisme\xspace}
\newcommand \homos {homo\-mor\-phismes\xspace}
\newcommand \icl {inté\-gra\-lement clos\xspace}
\newcommand \icle {inté\-gra\-lement close\xspace}
\newcommand \icles {inté\-gra\-lement closes\xspace}
\newcommand \id {idéal\xspace}
\newcommand \ids {idéaux\xspace}
\newcommand \idep {idéal pre\-mier\xspace}
\newcommand \ideps {idéaux pre\-miers\xspace}
\newcommand \idm {idem\-po\-tent\xspace}
\newcommand \idms {idem\-po\-tents\xspace}
\newcommand \idme {idem\-po\-tente\xspace}
\newcommand \idmes {idem\-po\-tentes\xspace}
\newcommand \idp {idéal prin\-ci\-pal\xspace}
\newcommand \idps {idé\-aux prin\-ci\-paux\xspace}
\newcommand \inteq {intui\-ti\-vement \eqve}
\newcommand \inteqs {intui\-ti\-vement \eqves}
\newcommand \iso {iso\-mor\-phisme\xspace}
\newcommand \isos {iso\-mor\-phismes\xspace}
\newcommand \itf {idéal \tf}
\newcommand \itfs {idé\-aux \tf}
\newcommand \iv {inversible\xspace}
\newcommand \ivs {inversibles\xspace}
%
%
%
%
%
\newcommand \lgb {local-global\xspace}
\newcommand \lgbe {locale-globale\xspace}
\newcommand \lgbs {local-globals\xspace}
\newcommand \lot {loca\-lement\xspace}
\newcommand \lon {loca\-li\-sation\xspace}
\newcommand \lons {loca\-li\-sations\xspace}
\newcommand \lsdz {\lot \sdz}
\newcommand \mo {mo\-no\"{\i}de\xspace}
\newcommand \mos {mo\-no\"{\i}des\xspace}
\newcommand \moco {\mos \com}
\newcommand \mom {mo\-nô\-me\xspace}
\newcommand \moms {mo\-nô\-mes\xspace}
%
%
%
%
%
%
%
%
%
%
\newcommand \ncr{néces\-saire\xspace}       
\newcommand \ncrs{néces\-saires\xspace}       
\newcommand \ncrt{néces\-sai\-rement\xspace}       
\newcommand \ndz {régu\-lier\xspace}
\newcommand \ndzs {régu\-liers\xspace}
\newcommand \nocos {\noes \cohs}
\newcommand \noe {noethé\-rien\xspace}
\newcommand \noes {noethé\-riens\xspace}
\newcommand \noee {noethé\-rienne\xspace}
\newcommand \noees {noethé\-riennes\xspace}
%
%
%
%
\newcommand \oqc {ouvert \qc}
\newcommand \oqcs {ouverts \qcs}
\newcommand \ort{or\-tho\-go\-nal\xspace}  
\newcommand \orte{or\-tho\-go\-na\-le\xspace}  
\newcommand \orts{or\-tho\-go\-naux\xspace}  
\newcommand \ortes{or\-tho\-go\-na\-les\xspace}  
%
%
%
%
%
%
%
\newcommand \peq {purement équa\-tion\-nelle\xspace}
\newcommand \peqs {purement équa\-tion\-nelles\xspace}
\newcommand \plg {principe \lgb}
\newcommand \plgs {principes \lgbs}
\newcommand \plga {\plg abstrait\xspace}
\newcommand \plgas {\plgs abstraits\xspace}
\newcommand \plgc {\plg concret\xspace}
\newcommand \plgcs {\plgs concrets\xspace}
\newcommand \pn {présen\-ta\-tion\xspace}
\newcommand \pns {présen\-ta\-tions\xspace}
\newcommand \Pol {Poly\-nôme\xspace}
\newcommand \Pols {Poly\-nômes\xspace}
\newcommand \pol {poly\-nôme\xspace}
\newcommand \pols {poly\-nômes\xspace}
\newcommand \prmt {préci\-sé\-ment\xspace}
\newcommand \Prmt {Préci\-sé\-ment\xspace}
\newcommand \prt {pro\-pri\-été\xspace}
\newcommand \prts {pro\-pri\-étés\xspace}
\newcommand \ptf {\pro \tf}
\newcommand \ptfs {\pros \tf}
\newcommand \qc {quasi-compact\xspace}
\newcommand \qcs {quasi-compacts\xspace}
\newcommand \qi {quasi intègre\xspace}
\newcommand \qis {quasi intègres\xspace}
\newcommand \qiv {quasi inverse\xspace}
\newcommand \qivs {quasi inverses\xspace}
%
%
\newcommand \ralg {règle \agq}
\newcommand \ralgs {règles \agqs}
\newcommand \rdi {\rde inté\-grale\xspace}
\newcommand \rdis {\rdes inté\-grales\xspace}
\newcommand \rdij {règle \dij}
\newcommand \rdijs {règles \dijs}
\newcommand \rdt {rési\-duel\-lement\xspace}
\newcommand \rdv {rela\-tion de \dve valuative\xspace}
\newcommand \rdvs {rela\-tions de \dve valuative\xspace}
\newcommand \rdy {règle dyna\-mique\xspace}
\newcommand \rdys {règles dyna\-miques\xspace}
\newcommand \red {règle directe\xspace}
\newcommand \reds {règles directes\xspace}
%
\newcommand \rex {règle exis\-ten\-tielle simple\xspace}
\newcommand \rexs {règles exis\-ten\-tielles simples\xspace}
\newcommand \rexri {règle exis\-ten\-tielle rigide\xspace}
\newcommand \rexris {règles exis\-ten\-tielles rigides\xspace}
\newcommand \rsim {règle de simplification\xspace}
\newcommand \rsims {règles de simplification\xspace}
\newcommand \rmq {\rcm de quotients\xspace} 
\newcommand \rvq {\rcv par quotients\xspace} 
\newcommand \rmqs {\rcms de quotients\xspace} %
\newcommand \rvqs {\rcvs par quotients\xspace} %
%
%
%
\newcommand \sad {\salg dynamique\xspace}
\newcommand \sads {\salgs dynamiques\xspace}
\newcommand \salg {structure \agq}
\newcommand \salgs {structures \agqs}
\newcommand \sdz {sans \dvz}
\newcommand \sfio {sys\-tème fondamental d'\idms ortho\-gonaux\xspace}
\newcommand \sfios {sys\-tèmes fondamentaux d'\idms ortho\-gonaux\xspace}
\newcommand \sgr {\sys \gtr}
\newcommand \sgrs {\syss \gtrs}

\newcommand \sps {espace spectral\xspace}
\newcommand \spss {espaces spectraux\xspace}
\newcommand \sys {sys\-tème\xspace}
\newcommand \syss {sys\-tèmes\xspace}
%
\newcommand \talg {théorie \agq}
\newcommand \talgs {théories \agqs}
\newcommand \tco {théorie cohé\-rente\xspace}
\newcommand \tcos {théories cohé\-rentes\xspace}
\newcommand \tdij {théorie \dij}
\newcommand \tdijs {théories \dijs}
\newcommand \tdy {théorie dyna\-mique\xspace}
\newcommand \tdys {théories dyna\-miques\xspace}
\newcommand \tel {théorie exis\-ten\-tielle\xspace}
\newcommand \tels {théories exis\-ten\-tielles\xspace}
\newcommand \telri {théorie exis\-ten\-tiel\-lement rigide\xspace}
\newcommand \telris {théories exis\-ten\-tiel\-lement rigides\xspace}
\newcommand \tf {de type fini\xspace}
\newcommand \tfo {théorie formelle\xspace}
\newcommand \tfos {théorie formelles\xspace}
\newcommand \tgm {théorie \gmq}
\newcommand \tgms {théories \gmqs}
\newcommand \Tho {Théo\-rème\xspace}
\newcommand \tho {théo\-rème\xspace}
\newcommand \thos {théo\-rèmes\xspace}
\newcommand \tpe {théorie \peq}
\newcommand \tpes {théories \peqs}
\newcommand \trdi {treil\-lis dis\-tri\-bu\-tif\xspace}
\newcommand \trdis {treil\-lis dis\-tri\-bu\-tifs\xspace}
%
%
%
%
\newcommand \uvle {uni\-ver\-selle\xspace}
\newcommand \uvles {uni\-ver\-selles\xspace}
%
%
%
%
\newcommand \valn {valuation\xspace}
\newcommand \valns {valuations\xspace}
\newcommand \vst {Valuativ\-stel\-lensatz\xspace}
\newcommand \vsts {Valuativ\-stel\-lensätze\xspace}
\newcommand \zed {zéro-di\-men\-sionnel\xspace}
\newcommand \zede {zéro-di\-men\-sion\-nelle\xspace}
\newcommand \zedr {\zed réduit\xspace}
\newcommand \zedrs {\zeds réduits\xspace}
%
%


\newcommand \cof {cons\-truc\-tif\xspace}
\newcommand \cofs {cons\-truc\-tifs\xspace}

\newcommand \cov {cons\-truc\-tive\xspace}
\newcommand \covs {cons\-truc\-tives\xspace}

\newcommand \coma {\maths\covs}
\newcommand \clama {\maths clas\-siques\xspace}

\renewcommand \cot {cons\-truc\-ti\-vement\xspace}

\newcommand \matn {mathé\-ma\-ticien\xspace}
\newcommand \matne {mathé\-ma\-ti\-cienne\xspace}
\newcommand \matns {mathé\-ma\-ticiens\xspace}
\newcommand \matnes {mathé\-ma\-ti\-ciennes\xspace}

\newcommand \maths {mathé\-ma\-tiques\xspace}
\newcommand \mathe {mathé\-ma\-tique\xspace}

\newcommand \prco {démons\-tration \cov}
\newcommand \prcos {démons\-trations \covs}

\thickmuskip = 7mu plus 2mu

\pagestyle{headings}
\patchcmd{\sectionmark}{\MakeUppercase}{}{}{}

\stMF
\startcontents[french]

\rdb
\label{beginfrench}

\title{Dimension valuative, points de vue constructifs}

\sibil{\author{Henri Lombardi, Stefan Neuwirth, Ihsen Yengui}
}
\sinotbil{
\author{Henri Lombardi
\thanks{Université de Franche-Comté, Laboratoire de mathématiques de Besançon, UMR, CNRS 6623,  25000
Besançon, France. {\tt henri.lombardi@univ-fcomte.fr}}
\and
Stefan Neuwirth
\thanks{Université de Franche-Comté, Laboratoire de mathématiques de Besançon, UMR CNRS 6623, 16~route de Gray, 25000
Besançon, France. {\tt stefan.neuwirth@univ-fcomte.fr}}
 \and  Ihsen Yengui
\thanks{Département de mathématiques,  Faculté des sciences de Sfax, Université de Sfax, 3000 Sfax, Tunisia. {\tt ihsen.yengui@fss.rnu.tn}}
}}

\maketitle

\begin{abstract}
Il existe plusieurs caractérisations de la dimension valuative d'un anneau commutatif. Des versions constructives de cette dimension ont été proposées, et démontrées équivalentes à la notion classique dans le cadre des mathématiques classiques. Contrairement aux version classiques, les versions constructives ont un contenu calculatoire clair et elles peuvent être utilisées dans les cas usuels d'anneaux commutatifs.
Cet article étudie les relations en termes de calculs explicites entre trois définitions constructives qui ont été proposées. De cette manière ces trois versions sont prouvées équivalentes directement en mathématiques constructives.

\end{abstract}

\medskip \noindent {\bf Keywords:} Mathématiques constructives, dimension valuative d'un anneau commutatif, \algos, programme de Hilbert pour l'\alg abstraite.

\smallskip \noindent {\bf MSC:} 
13B40, 13J15, 03F65

\newpage 
 
\setcounter{tocdepth}{4}
\markboth{Table des matières}{Table des matières}

\printcontents[french]{}{1}{}

\setcounter{section}{0}
\setcounter{subsection}{0}
\setcounter{equation}{0}

\section{Introduction}
Cet article est écrit dans le style des \coma à la Bishop (\cite{fBi67,fBB85,fBR1987,fACMC,fMRR,fYen2015}).

On utilisera quand c'est \ncr le vocabulaire et les notations des \sads. Voir \cite{fCLR01,fCL05,fLom06,fLom2020}.

Nous comparons dans cet article les différentes versions \covs de la dimension valuative que l'on trouve dans \cite{fCoq2009,fKY2020,fCACM,fACMC}, ainsi qu'une version \cov qui étend celle de \cite[Coquand]{fCoq2009} au cas d'un anneau non \ncrt intègre.

\smallskip Le lecteur qui ne connait pas les \coma dans le style de Bishop peut consulter les chapitres 1 et 2 de  \citealt{fBi67}, l'article  \citealt{fCL05} et les recensions \citealt{fmyhill72} and \citealt{fstolzenberg70}. 

\smallskip Quand une \dfn ou un \tho classiques utilisent des notions abstraites sans contenu calculatoire, nous essayons en \coma de trouver ce que nous appelons une \textsl{version \cov} de cette \dfn ou de ce \tho. Cette version doit être \eqve en \clama à la version classique. De plus il est \ncr que les exemples de base usuels puissent être traités avec la version \cov. Par exemple la version \cov d'un \alo est simplement un anneau dans lequel, lorsqu'une somme finie d'\elts est \iv, l'un des termes de la somme doit être \iv.

\subsection{Définition et \prts \caras de la dimension valuative en \clama}\label{fsubsecdimval}

En \clama, la dimension valuative d'un anneau intègre \(\gR\) est la longueur maximum \(n\) d'une chaine d'\advs \(\gV_0\subsetneq \dots \subsetneq \gV_n=\gK\) du corps de fractions \(\gK=\Frac\gR\) qui contiennent \(\gR\).

La dimension valuative d'un anneau arbitraire est définie comme la borne supérieure des dimensions valuatives de ses quotients intègres~(\cite{fCah90}). 

%

En \clama, les \eqvcs suivantes sont bien connues (on note $\Kdim(\gR)$ la dimension de Krull de l'anneau $\gR$, i.e.\ la longueur maximale d'une chaine d'\ideps dans~$\gR$).

Rappelons qu'un \cdi est de dimension de Krull $0$, que la dimension de l'anneau trivial, qui n'a pas de quotient intègre, est par convention égale à  $-1$.

\begin{ftheorem} \label{fthclassdimval} Soit $\gR$ un anneau commutatif intègre  non trivial avec $\gK=\Frac(\gR)$. \Propeq
\begin{enumerate}
\item\label{fthclassdimval1} $\gR$ est de dimension valuative $\leq n$.
\item\label{fthclassdimval2} Pour tout entier $k$ et tous $x_1,\dots,x_k\in \gK$, $\Kdim(\gR[\xk])\leq n+k$.
\item\label{fthclassdimval3} Pour tout entier $k$, $\Kdim(\gR[\Xk])\leq n+k$.
\item\label{fthclassdimval4}  $\Kdim(\gR[\Xn])\leq 2n$.
%
%
\end{enumerate}
En outre, sans supposer $\gR$ intègre, mais en le supposant non trivial, les points 1, 3 et~4 sont toujours \eqvs. 
\end{ftheorem}

 Enfin, dans l'article \cite{fKY2020}, qui fait suite à \cite[Kemper \& Trung]{fKV2014}, une nouvelle \carn \cov de la dimension valuative d'un anneau commutatif est proposée, inspirée de la \carn \cov de la \ddk donnée dans \cite{fLom06}.

\smallskip Nous sommes donc en possession d'au moins trois approches possibles de la dimension valuative d'un anneau commutatif en \clama. Celle correspondant au point \emph{\ref{fthclassdimval1}} ci-dessus, celle correspondant aux points \emph{\ref{fthclassdimval3}} et \emph{\ref{fthclassdimval4}}
et celle proposée par Kemper et Yengui. 

\smallskip
Nous proposons dans la  section \ref{fsecdival2} de rappeler des \dfns \covs précises concernant ces trois approches. 

Nous notons $\Vdim(\gR)$ une \dfn constructive correspondant à la \carn donnée dans le point \textsl{1} du~\thref{fthclassdimval}.

Nous notons $\vdim(\gR)$ une \dfn constructive correspondant à la \carn donnée dans le point \textsl{3} du~\thref{fthclassdimval}.

Nous notons $\dimv(\gR)$ la \dfn constructive donnée par Kemper et Yengui.

Ces \dfns \covs ont déjà été démontrées \eqves à la \dfn classique en \clama, au moins dans le cas d'un anneau intègre.  Dans la section \ref{fsecdival3} nous démontrons \underline{\cot} l'\eqvc de ces trois \dfns en toute généralité.

\subsection{Terminologie \cov de base}\label{fsubsubsectermcov}

Une partie $P$ d'un ensemble $E$ est dite \textsl{détachable} lorsque la \prt~\hbox{$x\in P$} est décidable pour les $x\in E$. Autrement dit, la règle suivante est satisfaite:

\Regles{\labu \(\vd x\in P \vou x\notin P .\)}

\noindent Pour décrire cette situation, il faut donc introduire à la fois le prédicat d'appartenance et le prédicat opposé.

Nous disons qu'un anneau est \textsl{intègre} (ou est un domaine d'intégrité), lorsque tout \elt est nul ou régulier, et qu'un anneau est un \textsl{corps discret} lorsque tout \elt est nul ou inversible. Cela n'exclut pas l'anneau trivial.

Un anneau est dit \textsl{\sdz} lorsque  la règle

\Regles{\labu \(\,\, xy=0\Vd x=0 \vou y=0\)}

\noindent est satisfaite. Un anneau intègre est \sdz. La réciproque, valable en \clama, n'est pas assurée \cot\footnote{{En \coma, \gui{ou} a son sens intuitif, \emph{i.e.} l'une des deux \prts est valide de manière explicite. Sachant d'un anneau est \sdz avec un \gui{ou} explicite, il n'y a pas de \demo
 \cov que tout \elt est nul ou régulier avec un \gui{ou} explicite.
}}.

Certaines versions locales des notions d'anneau intègre et d'anneau \sdz sont discernables y compris en \clama.

Un anneau est dit \textsl{\lsdz\footnote{En anglais, un \gui{pf-ring}: les \idps sont plats.}} lorsque, la règle suivante est satisfaite

\Regles{\labu \(\,\, ab=0\Vd \,\Exists s,t\;(sa=0\vet tb=0\vet s+t=1).\)}

\noindent
Alors, dans \(\gR[1/s]\) l'\elt~\(a\) est nul, et dans \(\gR[1/t]\) l'\elt \(b\) est nul\footnote{En \clama, les \elts d'un anneau peuvent être vus comme des \gui{fonctions} définies sur le spectre de Zariski. Ici nous avons deux ouverts de base $\rD(s)$ et $\rD(t)$ qui recouvrent le spectre de Zariski; sur le premier $a=0$, sur le second $b=0$.}.

Un anneau est dit \textsl{\qi\footnote{En anglais, un \gui{pp-ring}: les \idps sont projectifs.}} si l'annulateur de tout \elt \(a\) est engendré par un \idm (\ncrt unique), noté \(1-e_a\). On a
$\gR\simeq \gR[1/e_a]\times \aqo{\gR}{e_a}$.
Dans l'anneau $\gR[1/e_a]$, l'\elt $a$ est \ndz, et dans $\aqo{\gR}{e_a}$, $a$ est nul\footnote{En \clama nous avons une partition du spectre de Zariski en deux ouverts de base $\rD(1-e_a)$ et $\rD(e_a)$; sur le premier $a=0$, sur le second $a$ est régulier.}.
Un anneau \qi est \lsdz mais la réciproque n'est pas valable.
Notons que l'on~a $e_{ab}=e_a e_b$, $e_aa=a$ et $e_0=0$.
Les anneaux \qis ont une \dfn purement équationnelle. Supposons en effet qu'un anneau commutatif soit muni d'une loi  unaire \hbox{$a\mapsto
\ci{a}$} qui vérifie les trois axiomes suivants:
\begin{equation}\label{feqaqis}
\ci{a}\,a=a,\quad
\ci{(ab)}=\ci{a}\,\ci{b},\quad
\ci{0}=0
\end{equation}
Alors,   pour tout $a\in\gR$,  $\Ann(a)=\gen{1-\ci{a}}$ et $\ci{a}$
est \idm, de sorte que l'anneau  est \qi.

\begin{flemma}[lemme de scindage \qi] \label{fthScindageQi}
Soient $n$ \elts $x_1$, \dots, $x_n$ dans un anneau \qi~$\gR$.
Il existe un \sfio $(e_j)$ de cardinal~$2^n$ tel que dans chacune des
composantes~\hbox{$\gR[1/e_j]$}, chaque~$x_i$ est nul ou \ndz.
\end{flemma}

Le fait de pouvoir scinder systématiquement en deux composantes un anneau \qi
conduit à la méthode \gnle suivante.
La différence essentielle avec le lemme de scindage précédent
est que l'on  ne connait pas à priori la famille finie d'\elts qui
va provoquer le scindage.

\rdb
\mni {\bf Machinerie locale-globale \elr\ \num1.}\label{fMethodeQI}
{\sl La plupart des \algos qui fonctionnent avec les anneaux intègres non
triviaux peuvent être modifiés de manière à fonctionner avec les anneaux
\qis, en scindant l'anneau
en deux composantes chaque fois que l'\algo écrit pour les anneaux intègres
utilise le test
\gui{cet \elt est-il nul ou \ndz?}. Dans la première composante l'\elt en question
est nul, dans la seconde il est \ndz.}

\medskip Nous disons qu'un idéal est \textsl{premier} s'il produit au quotient un anneau \sdz. Cela n'exclut pas l'\id $\gen{1}$.
Ces conventions (adoptées dans \cite{fCACM}) n'utilisent pas la négation et permettent d'éviter certains raisonnements cas par cas litigieux d'un point de vue \cof.

\subsection{Domaines de valuation}\label{fsubsubsecdomval}

 Un \textsl{domaine de valuation} $\gV$ est un sous-anneau d'un corps discret $\gK$
satisfaisant l'axiome

\Regles{\labu \(\,\, xy=1 \Vd x\in \gV \vou y\in \gV \quad (x,y\in\gK)\)}


On dit alors que $\gV$ est un \textsl{anneau de valuation du corps discret $\gK$} et que $(\gK,\gV)$ est un \textsl{corps valué}.

Un domaine de valuation est la même chose qu'un domaine de Bézout local, ou encore un anneau intègre dont le groupe de divisibilité est totalement ordonné.

Dans un corps valué $(\gK,\gV)$ on dit que $x$ divise $y$ et l'on écrit $x\di y$ s'il existe \hbox{un $z\in\gV$} tel que $xz=y$.
On note $\Gamma(\gV)$ (ou $\Gamma$ si le contexte est clair) le groupe $\gK\eti\!/\gV\eti$ (noté additivement), avec la relation d'ordre~$\leq$ induite par la relation $\di$ dans $\gK\eti$. On note $\Gamma_\infty=\Gamma\cup\so\infty$ (où $\infty$ est un \elt maximum). Dans ces conditions, l'application naturelle $v:\gK\to\Gamma_\infty$ est appelée la \textsl{valuation} du corps valué. On a
\[
v(xy)=v(x)+v(y), \;\hbox{et }v(x+y)\geq \min(v(x),v(y)) \hbox{ avec égalité si } v(x)\neq v(y).
\]
On a aussi $\gV=\sotq{x\in\gK}{v(x)\geq 0}$ et 
$\gV\eti=\sotq{x\in\gK}{v(x)= 0}$.

\subsection{Dimension des treillis distributifs}\label{fsubsecdimtrdi}

Dans ce paragraphe nous expliquons la \dfn \cov de la dimension d'un \trdi. Un \trdi $\gT$ peut être vu comme l'ensemble des \oqcs d'un \sps, que l'on appelle \textsl{l'espace dual} de $\gT$. La \dfn \cov de cette dimension correspond à la définition de la dimension de l'\sps dual en \clama, qui est \egmt la longueur maximum d'une chaine croissante d'\ideps de $\gT$: voir le \thref{fth-dico-trdi-spec-dim1}.

\smallskip Un \textsl{idéal} $\fb$ d'un \trdi $(\gT,\vi,\vu,0,1)$ est une
partie qui satisfait les con\-traintes:%
\begin{equation}\label{feqIdeal}
\left.
\begin{array}{rcl}
  & & 0 \in \fb  \\
x,y\in \fb & \Longrightarrow  & x\vu y \in \fb  \\
x\in \fb ,\; z\in \gT& \Longrightarrow  & x\vi z \in \fb  \\
\end{array}
\right\}
\end{equation}
On note $\gT/(\fb=0)$ le treillis quotient obtenu en forçant les \elts de $\fb$ à être nuls. On peut aussi définir les \ids comme les noyaux de morphismes entre treillis.

Un \textsl{\idp} est un \id engendré par un seul \elt $a$, il est noté $\dar a$.
On~a $\dar a=\sotq{x\in\gT}{x\leq a}$.
L'\id $\dar a$, muni des lois $\vi$ et $\vu$ de $\gT$ est un \trdi
dans lequel l'\elt maximum est $a$. L'injection canonique $\dar
a\rightarrow \gT$ n'est pas un morphisme de \trdis parce que
l'image de $a$ n'est pas égale à $1_\gT$. Par contre
l'application $\gT\rightarrow \dar a,\;x\mapsto x\vi a$
est un morphisme surjectif, qui définit donc $\dar a$ comme la
structure quotient $\gT/(a=1)$.

La notion de \textsl{filtre} est la notion opposée (\cad obtenue en renversant la relation d'ordre) à celle d'idéal.

Une règle particulièrement importante
pour les \trdis, appelée \textsl{coupure}, est la
suivante
\begin{equation}\label{fcoupure1}
  (x\vi a \leq  b) \vii  (a \leq x\vu  b)
 \,\Rightarrow\,  (a \leq  b).
\end{equation}

Si $A\in\Pfe(\gT)$ (ensemble des parties finiment énumérées de~$\gT$)  on notera
$$\ndsp \Vu  A:=\Vu _{x\in A}x\qquad {\rm et}\qquad \Vi  A:=\Vi _{\!x\in A}x.
$$

On  note $A \vda B$ ou $A \vdash_\gT B$ la relation définie comme suit  sur l'ensemble $\Pfe(\gT)$:

\snic{A \vda B \; \; \equidef\; \; \Vi  A\;\leq \;
\Vu  B.}

Cette relation vérifie les axiomes suivants, dans lesquels on
écrit $x$ pour $\{x\}$ et $A, B$  \hbox{pour $ A\cup B$}.

\vspace{-.5em}
$$\arraycolsep3pt\begin{array}{rcrclll}
&    & x  &\vda& x    &\; &(R)     \\[1mm]
 \hbox{si } A \vda B &   \hbox{alors}  & A,A' &\vda& B,B'   &\; &(M)     \\[1mm]
\hbox{si } (A,x \vda B) \hbox{ et }  (A \vda B,x)
& \hbox{alors} & A &\vda& B &\;
&(T).
\end{array}$$
On dit que la relation est \textsl{réflexive}, \label{fremotr} \textsl{monotone} et
\textsl{transitive}.
La troisième règle (transitivité) peut être vue comme une
\gnn de la règle (\ref{fcoupure1}) et s'appelle \egmt la
règle de \textsl{coupure}.

\begin{fdefinition}
\label{fdefEntrel}
Pour un ensemble $S$ arbitraire, une relation sur  $\Pfe(S)$  qui est
réflexive, monotone et transitive est
appelée une {\sl \entrel} (en anglais, {\sl entailment relation}).
\end{fdefinition}

Le théorème suivant est fondamental. Il dit que les
trois propriétés des \entrels sont exactement ce qu'il faut pour que
l'interprétation en forme de \trdi soit adéquate.

\begin{ftheorem}[\tho fondamental des \entrels] \label{fthEntRel1} \cite{fCC00,fLor1951} \\
Soit un ensemble $S$  avec une \entrel
$\vdash_S$ sur $\Pfe(S)$. On considère le \trdi $\gT$ défini par
\gtrs et relations comme suit: les \gtrs sont les
\elts de $S$ et les relations sont les
$$ A\; \vdash_\gT \;  B
$$
chaque fois que $A\; \vdash_S \; B$.  Alors, pour tous $A$,
 $B$ dans $\Pfe(S)$,  on~a
$$  A\; \vdash_\gT \;  B
\; \Longrightarrow \; A\; \vdash_S \;  B.
$$
\end{ftheorem}

En \clama un {\em \id premier} $\fp$ d'un \trdi $\gT\neq \Un$ est un \id dont
le complémentaire $\fv$ est un filtre (qui est alors un {\em
filtre premier}).  On a alors $\gT/(\fp=0,\fv=1)\simeq\Deux$.  Il
revient au même de se donner un \idep de $\gT$ ou un morphisme de
\trdis $\gT\rightarrow \Deux$.

\begin{ftheorem}[dimension d'un \trdi] \emph{Voir \cite{fCL2003,fLom02}, \cite[chapitre XIII]{fACMC}.} \label{fth-dico-trdi-spec-dim1}
En \clama, pour un \trdi non trivial et pour $n\geq 0$, \propeq
\begin{enumerate}
\item Le treillis est de dimension $\leq n$, \cad par \dfn la longueur de toute chaine d'\ideps est $\leq n$.
\item Pour tout $x\in\gT$ le treillis quotient $\gT/(x=0,I_x=0)$, où $I_x=\sotq{y}{x\vi y=0}$, est de dimension $\leq n-1$\footnote{Un treillis est dit de dimension \(-1\) s'il est trivial, i.e. réduit à un point, cela initialise la récurrence dans le point \textsl{2}. On vérifie facilement qu'un treillis est \zed, \cad de dimension \(\leq 0\) \ssi c'est une algèbre de Boole.}.
\item
Pour toute suite $(x_0,\dots,x_n)$ dans $\gT$ il existe une suite \emph{complémentaire}  $(y_0,\dots,y_n)$ au sens suivant
\begin{equation}\label{feqC2G}
\left.\arraycolsep3pt
\begin{array}{rcl}
1& \vda  &   y_n, x_n\\
 y_n,  x_n & \vda  &  y_{n -1}, x_{n -1}  \\
\vdots~~~~& \vdots  &~~~~  \vdots \\
  y_1, x_1& \vda  &  y_0, x_0  \\
y_0, x_0& \vda  & 0
\end{array}
\right\}
\end{equation}
\end{enumerate}
\end{ftheorem}

Par exemple, pour $n=2$ les inégalités dans le point \textsl{3} correspondent au dessin suivant dans~$\gT$.
$$\SCo{1.5pt}{1.2cm}{x_0}{x_1}{x_2}{y_0}{y_1}{y_2}$$

\noindent \textsl{Définition et remarque importantes.} Les points \textsl{2} et \textsl{3} sont \eqvs en \coma et servent de \dfn pour la dimension de \(\gT\), notée \(\Kdim(\gT)\) et appelée \textsl{dimension de Krull} du \trdi.

Notez cependant que d'un point de vue \cof on a seulement défini clairement
la phrase \(\Kdim(\gT)\leq n\). Pour établir \gui{$\Kdim(\gT)= n$} il serait \ncr de démontrer aussi $\lnot (\Kdim(\gT)\leq n+1)$. 
Heureusement, la plupart des \thos en \clama fonctionnent avec une hypothèse du type  $\Kdim(\gT)\leq n$.

On a en outre le résultat \cof suivant, qui simplifie l'utilisation du point \textsl{3}.

\begin{flemma} \label{flem-trdi-spec-dim}
Soit $S$ une partie d'un \trdi $\gT$ qui engendre $\gT$ en tant que \trdi.
En \coma,  pour $n\geq 0$, \propeq
\begin{enumerate}
\item
Pour toute suite $(x_0,\dots,x_n)$ dans $\gT$ il existe une suite \emph{complémentaire}  $(y_0,\dots,y_n)$ dans \(\gT\).
\item
Pour toute suite $(x_0,\dots,x_n)$ dans $S$ il existe une suite \emph{complémentaire}  $(y_0,\dots,y_n)$ dans \(\gT\).
\end{enumerate}

\end{flemma}

Dans les deux paragraphes qui terminent cette section, nous expliquons comment les spectres des \trdis conduisent à des \spss en algèbre abstraite. La dualité entre \trdis et \spss a été établie par  \citet*{fSto37}. \citet*{fHoc1969} a introduit la terminologie d'\sps et a popularisé leur utilisation. Mais il ne cite pas  \citealt*{fSto37}
et il n'indique pas le lien entre les \spss et les \trdis.

\subsection{Dimension de Krull d'un anneau commutatif}\label{fsubsecdimcomring}

Nous rappelons tout d'abord l'idée principale de l'approche \cov du spectre d'un anneau commutatif développée dans 
\cite{fJoy71,fJoy76}.

Si $\fa$ est un \id de $\gR$, nous notons $\DR(\fa)$ (ou $\rD(\fa)$ si le contexte est clair) le
nilradical de l'\id $\fa$:
\begin{equation} \label{feqZar}
\begin{array}{rclcl}
\DR(\fa)&  = & \sqrt[\gR]{\fa} &=&\sotq{x\in\gR}{\Ex m\in\NN\;\; 
x^m\in\fa}\text.
\end{array}
\end{equation}
Quand $\fa=\gen{x_1,\ldots ,x_n}$, nous notons $\DR(\fa)$ sous la forme
$\DR(x_1,\ldots ,x_n)$.
Si le contexte est clair nous notons aussi $\wi{x}$ pour $\DR(x)$.

Par \dfn, le {\sl treillis de Zariski} de $\gR$, noté $\ZarR$, est l'ensemble des $\DR(x_1,\ldots ,x_n)$ avec l'inclusion pour relation d'ordre. Les bornes inférieure et supérieure sont données par  
\[
\DR(\fa_1)\vi\DR(\fa_2)=\DR(\fa_1\fa_2)\quad \mathrm{et} \quad
\DR(\fa_1)\vu\DR(\fa_2)=\DR(\fa_1+\fa_2).
\]
Le treillis de Zariski de $\gR$
est un \trdi et $\DR(x_1,\ldots ,x_n)=
\wi{x_1}\vu\cdots \vu\wi{x_n}$.
les \elts  $\wi{x}$ forment \sgr (stable par $\vi$) de $\ZarR$.

Si $M$  est un \mo  multiplicatif dans $\gR$ engendré par  $U=\so{u_1,\dots,u_m}$ et si $\fa=\gen{\an}$ est un \itf nous avons les \eqvcs 
\begin{equation} \label {feqZarA}
\Vi_{i\in \so{1,\dots,m}} \wi {u_i} \;\leq_{\ZarR} \Vu_{j\in \so{1,\dots,n}} \wi{a_j}
\quad\Longleftrightarrow \;
\prod_{u\in  \so{1,\dots,m}} u_i  \in \sqrt[\gR]{\fa}
\quad\Longleftrightarrow \quad
M\,\cap\, \fa\neq \emptyset \; 
\end{equation}

Cela décrit complètement le \trdi $\ZarR$. En effet la première formule
dans~(\ref{feqZarA}) définit une \entrel sur $\gR$ qui donne une autre description $\ZarR$.

Nous avons aussi la   \carn suivante \citep[voir][]{fCC00,fCL2003}.
\begin{fproposition}[\dfn à la Joyal du spectre d'un anneau commutatif]
\label{fpropZar}~\\
Le \trdi $\ZarR$ est
(à \iso unique près) le treillis engendré par les symboles $\DR(a)$ pour $a\in\gR$
soumis aux relations suivantes:
\[\begin{array}{cccc}
\DR(0_\gR) =0   ,\; \DR(1_\gR)= 1 ,\;
   \DR(x+y) \leq \DR(x)\vu\DR(y) ,\; \DR(xy) = \DR(x)\vi \DR(y).
\end{array}\]
\end{fproposition}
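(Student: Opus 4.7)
\emph{Plan de \prco.} L'idée est d'identifier le \trdi $\gT$ présenté dans l'énoncé avec le \trdi $\gT_0$ obtenu par application du \thref{fthEntRel1} à l'\entrel sur $\gR$ définie par $U \vdash A \equidef \prod U \in \sqrt[\gR]{(A)}$; or $\gT_0$ est déjà identifié à $\ZarR$ en vertu de l'\eqvc~(\ref{feqZarA}) rappelée plus haut. On construit alors deux morphismes $\varphi : \gT \to \gT_0$ et $\psi : \gT_0 \to \gT$ qui fixent tous les \gtrs $\DR(a)$, et qui seront donc réciproques par unicité universelle, fournissant l'\iso cherché.

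La construction de $\varphi$ est directe: les quatre relations de la \pn de $\gT$ sont satisfaites dans $\gT_0$, ce qui résulte immédiatement des faits $0 \in \sqrt[\gR]{(0)}$, $1 \in \sqrt[\gR]{(1)}$, $x+y \in \sqrt[\gR]{(x,y)}$, $xy \in \sqrt[\gR]{(x)} \cap \sqrt[\gR]{(y)}$ et $xy \in \sqrt[\gR]{(xy)}$. La \prt universelle de $\gT$ fournit alors $\varphi$, qui est surjectif puisque $\gT_0$ est engendré par les $\DR(a)$.

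La construction de $\psi$ constitue le point technique principal, et donc l'obstacle principal de la preuve. Il faut vérifier que l'application $a \mapsto \DR(a) \in \gT$ respecte l'\entrel qui définit $\gT_0$, \cad que l'hypothèse $\prod_i u_i \in \sqrt[\gR]{(a_1, \ldots, a_n)}$ entraine $\Vi_i \DR(u_i) \leq \Vu_j \DR(a_j)$ dans $\gT$. Supposons $\bigl(\prod_i u_i\bigr)^k = \sum_j b_j a_j$. L'itération de $\DR(xy) = \DR(x) \vi \DR(y)$ donne $\DR(\prod_i u_i) = \Vi_i \DR(u_i)$, et l'identité $\DR(x) \vi \DR(x) = \DR(x)$ (vraie dans tout treillis) entraine $\DR(x^k) = \DR(x)$ dans $\gT$; par ailleurs, l'itération de $\DR(x+y) \leq \DR(x) \vu \DR(y)$ fournit $\DR\bigl(\sum_j b_j a_j\bigr) \leq \Vu_j \DR(b_j a_j) \leq \Vu_j \DR(a_j)$. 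On en déduit $\Vi_i \DR(u_i) = \DR\bigl((\prod_i u_i)^k\bigr) \leq \Vu_j \DR(a_j)$ dans $\gT$, et le \thref{fthEntRel1} livre alors $\psi$. Les composées $\varphi \circ \psi$ et $\psi \circ \varphi$ coïncident ensuite avec l'identité sur les \gtrs, donc sur les treillis tout entiers, ce qui achève la preuve. L'essentiel du travail consiste donc à vérifier explicitement que les quatre relations \elrs de la \pn suffisent à capturer toute l'information algébrique contenue dans le radical d'un \itf.
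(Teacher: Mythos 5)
Your proof is correct. Note first that the paper does not itself prove this proposition; it cites \citealt{fCC00} and \citealt{fCL2003} and moves on, so there is no in-paper proof to compare against. Your argument is the standard one behind those references: you verify that the four Joyal relations hold under the concrete entailment relation of~\pref{feqZarA}, giving the morphism $\varphi : \gT \to \ZarR$, and conversely that a radical membership $\bigl(\prod_i u_i\bigr)^k = \sum_j b_j a_j$ forces $\Vi_i\DR(u_i)\leq\Vu_j\DR(a_j)$ in the abstractly presented lattice $\gT$ by combining $\DR(xy)=\DR(x)\vi\DR(y)$ with idempotence and $\DR(x+y)\leq\DR(x)\vu\DR(y)$, giving $\psi$; the two are then mutually inverse because they fix the generators. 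One small bookkeeping point: what actually delivers $\psi$ is the universal property of the lattice presented by generators and relations, once the defining relations are checked in $\gT$ as you do. Theorem~\ref{fthEntRel1} (conservativity) enters earlier, namely to justify that the lattice $\gT_0$ generated by the entailment relation is $\ZarR$ itself and not a proper quotient of it.
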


La construction $\gR\mapsto\ZarR$ donne un foncteur de la catégorie des anneaux commutatifs vers celle des \trdis.
Via ce foncteur la projection $\gR\to\gR/\DR(0_\gR)$ donne un \iso
$\ZarR\to \Zar(\gR/\DR(0_\gR))$. Nous avons $\ZarR=\Un$ \ssi  $1_\gR=0_\gR$.

En \clama on a une \demo simple du fait que la dimension du \trdi $\ZarR$ est la \ddk de l'anneau $\gR$.

\smallskip Le treillis de Zariski d'un anneau est l'exemple paradigmatique d'un \trdi engendré par une \sad. Nous expliquons la méthode \gnle dans le paragraphe qui suit.

\subsection{Structures \agqs dynamiques}

Références: \cite{fCLR01,fLom98,fLom06,fLom2020,fBC2005,fCoq2005}. Les \sads (du premier ordre) sont explicitement nommées dans~\cite{fLom98,fLom06}. Dans  \cite{fCLR01} elles sont implicites, mais décrites sous la forme de leurs présentations.
Elles sont \egmt implicites dans \cite{fLom02}, et, last but not least, dans \cite[{D5}, 1985]{fD5}, qui a été une source d'inspiration essentielle: on peut calculer de manière sûre dans la clôture algébrique d'un corps discret, même quand il n'est pas possible de construire cette clôture algébrique.
Il suffit donc de considérer la clôture \agq comme une \sad \gui{à la D5} plutôt que comme une structure \agq usuelle: \textsl{l'évaluation paresseuse à la D5 fournit une sémantique \cov pour la clôture \agq d'un \cdi}.

Une étude plus détaillée en cours de rédaction se trouve dans \cite[Théories géométriques pour l'algèbre constructive]{fLom-tgac}. Voir aussi \citealt{fLM2022}.

Une \tdy (finitaire) $\sa{T}=(\cL,\cA)$ est une version purement calculatoire, sans logique, d'une théorie cohérente. Le langage $\cL$ est donné par une signature, les axiomes (\elts de $\cA$) sont des \rdys.

Une \textsl{\sad $\gD$ pour une \tdy \sa{T}} est donnée par \gtrs et relations:
$\gD=\big((G, R), \sa{T}\big)$. Si $(G,R)$ est le diagramme positif d'une structure \agq usuelle $\gR$ on note $\gD=\sa{T}(\gR)$.

Considérons une  \sad \(\gD=\big((G,R),\sa{T}\big)\) pour une \tdy $\sa{T}=(\cL,\cA)$.
Soit $S$  un ensemble de formules atomiques closes de \(\gD\). On définit la \entrel $\vdash_{\gD,S}  $ sur $S$ associée à \(\gD\)  comme suit pour des \(A_i\) et \(B_j\in S\):

\vspace{-.5em}
\begin{equation} \label{feq2}
\begin{aligned}
 A_1,\dots,A_n  &\,\vdash_{\gD,S} B_1,\dots,B_m
   \qquad\quad \equidef     \\[.2em]
    A_1\vet \dots\vet  A_n &\Vdi{\gD} B_1\vou\dots\vou B_m
 \end{aligned}
\end{equation}
On pourra noter $\Zar(\gD,S)$ le \trdi engendré par cette \entrel.
Intuitivement ce treillis est le treillis des valeurs de vérité des formules de \(S\) dans la \sad \(\gD\).

Le \textsl{treillis de Zariski (complet) d'une \sad \(\gD\)} est défini en prenant pour~$S$ l'ensemble $\Atcl(\gD)$ de toutes les formules atomiques closes de~\(\gD\). On le note $\Zar(\gD,\sa T)$ ou $\Zar(\gD)$ ou avec un nom particulier correspondant à la théorie~\sa{T}.

L'espace spectral dual est appelé le \textsl{spectre de Zariski de la \sad~\(\gD\)}, ou peut aussi lui attribuer un nom particulier.

Enfin  la \textsl{dimension de Krull de \(\gD\)} est par \dfn égale à
\(\Kdim(\Zar(\gD))\). La \dfn de \(\Vdim(\gR)\) dans le paragraphe suivant est donnée selon ce schéma.

\section{Trois \dfns \covs de la dimension valuative}\label{fsecdival2}

\subsection{Clôture \qi minimale d'un anneau réduit}\label{fsubsecCqimiared}

Nous rappelons ici quelques résultats essentiels donné dans \cite{fCACM}.
Si le contexte \(a\in\gR\) est clair, nous notons $a\epr$ l'\id annulateur de l'\elt $a$ dans \(\gR\). Nous utilisons aussi la notation $\fa\epr$ pour l'annulateur d'un \id~$\fa$.

\begin{flemma}\label{flem20MorRc}
Soit   \(\gR\) un  anneau réduit
et $a\in\gR$.
On définit
$$\gR_{\so{a}}\eqdefi\gR\sur{a\epr}\times \gR\sur{({a\epr})\epr}$$
et l'on note $\psi_a:\gR\to\gR_{\so{a}}$  l'\homo canonique.
\begin{enumerate}
\item $\psi_a(a)\epr$ est engendré par l'\idm \((\ov 0,\wi 1)\),
donc \(\psi_a(a)\epr=(\ov 1,\wi 0)\epr\).
\item $\psi_a$ est injectif (on peut identifier \(\gR\) à un sous-anneau de \(\gR_{\so{a}}\)).
\item Soit $\fb$  un \id  dans  \(\gR_{\so{a}}\), alors l'\id \(\psi_a^{-1}(\fb\epr)=\fb\epr\cap\gR\) est un \id annulateur dans \(\gR\).
\item L'anneau \(\gR_{\so{a}}\) est réduit.
\end{enumerate}
\end{flemma}

\begin{flemma}\label{flem3MorRc}
Soit   \(\gR\) réduit
et \(a,b\in\gR\). Alors avec les notations du lemme \ref{flem20MorRc}
 les deux anneaux \((\gR_{\so{a}})_{\so{b}}\) et \((\gR_{\so{b}})_{\so{a}}\) sont canoniquement isomorphes.
\end{flemma}

\rem Le cas où $ab=0$ est typique: quand on le rencontre, on voudrait
bien scinder l'anneau en composantes où les choses sont \gui{claires}.
La construction précédente donne alors les trois composantes
\[
\gR\sur{(ab\epr)\epr}, \; \gR\sur{(a\epr b)\epr}\, \hbox{ et } \,\gR\sur{(a\epr b\epr)\epr}.
\]
Dans la première $a$ est \ndz et $b=0$, dans la seconde
$b$ est \ndz et $a=0$, et dans la troisième $a=b=0$.
\eoe

\begin{flemma}\label{flem2qi}
Si \(\gR\subseteq\gC\) avec \(\gC\) \qi, le plus petit sous-anneau \qi de~\(\gC\) contenant~\(\gR\) est égal à \(\gR[(e_a)_{a\in\gR}]\),
où $e_a$ est l'\idm de $\gC$ tel que $\Ann_\gC(a)=\gen{1-e_a}_\gC$. Plus \gnlt, si $\gR\subseteq\gB$
avec $\gB$ réduit et si tout \elt $a$ de \(\gR\) admet un annulateur dans $\gB$
engendré par un \idm $1-e_a$,
alors le sous-anneau $\gR[(e_a)_{a\in\gR}]$ de $\gB$ est \qi.
\end{flemma}

 On note  \(\Rred=\gR/\!\sqrt[\gR]{\gen{0}}\) l'anneau réduit engendré par \(\gR\).

\begin{fthdef}[clôture \qi minimale]\label{fthAmin} ~ 
\\
Soit \(\gR\) un anneau réduit.
On peut définir un anneau $\Rmin$ comme limite inductive filtrante
 en itérant la construction de base qui consiste à
 remplacer~$\gE$ (l'anneau \gui{en cours}, qui contient \(\gR\)) par
\[\gE_{\so{a}}\eqdefi\gE\sur{a\epr}\times \gE\sur{({a\epr})\epr}=\gE\sur{\Ann_\gE (a) }\times \gE\sur{\Ann_\gE(\Ann_\gE (a) )},\]
 lorsque $a$ parcourt \(\gR\).
\begin{enumerate}
\item  Cet anneau $\Rmin$ est \qi, il contient \(\gR\) et il est entier sur \(\gR\).
\item Pour tout $x\in\Rmin$,
$x\epr\cap\gR$ est un \id annulateur dans \(\gR\).
\end{enumerate}
Cet anneau $\Rmin$ est appelé la \emph{clôture \qi minimale de \(\gR\)}.
\\
Lorsque \(\gR\) n'est pas \ncrt réduit,
  on prendra $\gR\qim\!\eqdefi (\Rred)\qim$.
\end{fthdef}

On note \(\cP_n\) l'ensemble des parties finies de \(\so{1,\dots,n}\). Voici une description de chaque anneau
obtenu à un étage fini de la construction de $\Rmin$.

\begin{flemma}\label{flem4MorRc}
Soit \(\gR\) un anneau réduit et $(\ua) = (\an)$ une suite de~$n$ \elts
de~\(\gR\).  Pour $I\in\cP_n$, on note $\fa_I$ l'\id
\[\fa_I = \big(\Prod_{i\in I} \gen{a_i}\epr \Prod_{j\notin I} a_j\big)\epr
= \big(\gen{a_i, i \in I}\epr \Prod_{j\notin I} a_j\big)\epr
.\]
Alors $\Rmin$ contient l'anneau suivant, produit de $2^n$
anneaux quotients de~\(\gR\) (certains éventuellement nuls):
\[\gR_{\so\ua} = \Prod_{I\in\cP_n} \gR\sur{\fa_I}.\]
 \end{flemma}

On note \(\BB(\gR)\) l'\agB des \idms de l'anneau \(\gR\).

\begin{flemma}\label{ffact2Amin}~
\begin{enumerate}
\item Soit \(\gR\) un anneau \qi.
\begin{enumerate}
\item $\Rmin=\gR$.
\item $\RX$ est \qi, et $\BB(\gR)=\BB(\RX)$.
\end{enumerate}

\item Pour tout anneau \(\gR\) on a un \iso canonique
\[\Rmin[\Xn]\simeq(\RXn)\qim.\]
\end{enumerate}
\end{flemma}

Notre suggestion est que la bonne \gnn de la notion du corps de fractions
d'un anneau $\gR$
n'est pas l'anneau  $\Frac\gR$ mais l'anneau \zedr $\Frac\Rmin$.

\subsection{Première approche \cov de la dimension valuative: \(\vdim\)}\label{fsubsecdimvalcons}

\begin{sloppypar}
  Dans l'ouvrage \cite{fCACM}, les auteurs notent $\Vdim(\gR)$ au lieu de $\vdim(\gR)$. Nous préférons dans cet article, réserver $\Vdim(\gR)$ pour la notion définie par T. Coquand.
\end{sloppypar}

\smallskip Dans \cite{fCACM} la \dfn suivante est \cov parce qu'il y a une \dfn \cov de la \ddk. Et le \tho suivant est démontré \cot.
Donc, pour le cas d'un anneau intègre, le~\thref{fthValDim} 
donne une version \cov de l'\eqvc entre les points \textsl{2}, \textsl{3} et \textsl{4} dans le \tho classique~\ref{fthclassdimval}.

\begin{fdefinition}[selon \cite{fCACM}]\phantomsection\label{fdefiValdim} 
\begin{enumerate}
\item []
\item Si $\gR$ est un anneau \qi, la \textsl{dimension valuative}
est définie comme suit. Soit $n\in\NN$ et $\gK=\Frac\gR$, on dit que \textsl{la dimension
valuative de $\gR$ est inférieure ou égale à $n$} et l'on écrit
$\vdim\gR\leq n$ si pour toute suite  $(\xm)$ dans~$\gK$ on a
$\Kdim \gR[\xm] \leq n$. Par convention $\vdim\gR=-1$ si $\gR$ est trivial.
\item Dans le cas \gnl on définit \gui{$\vdim\gR\leq n$} par \gui{$\vdim\Rmin\leq n$}\footnote{
\citet{fCACM} démontrent qu'en \clama on a l'\eqvc \hbox{$\vdim\gR\leq d\iff\Vdim \gR\leq d$} où $\Vdim\gR$ est définie comme dans \citealt{fCah90}. Par suite, cela démontre que $\Vdim\gR=\Vdim\Rmin$ en \clama. C'est une manière de ramener le cas \gnl au cas d'un anneau intègre sans utiliser d'\ideps.}.
\end{enumerate}
\end{fdefinition}

Pour $n= 0,-1$, on a $\vdim(\gR)=n\Leftrightarrow\Kdim(\gR)=n$.

On note que, comme pour la dimension de Krull, on n'a pas vraiment défini \(\vdim\gR\)  comme un \elt de \(\NN\cup\so\infty\): seule la propriété \gui{\(\vdim\gR\leq n\)} est bien définie, \cot, pour tout entier \(n\geq -1\).

\begin{ftheorem}[{\citealt[Theorem XIII-8.19]{fCACM}}]\label{fthValDim}
Les \eqvcs suivantes sont valides.
\begin{enumerate}
\item Si $n\geq 1$ et $k\geq -1$, alors
\begin{equation} \label{feq1thValDim}
\vdim\gR\leq k\iff\vdim\RXn\leq n+k.
\end{equation}
Si \(\gR\) est non trivial et \(\vdim\gR<\infty\), cela signifie intuitivement que l'on a une \egt $\vdim\RXn=n+\vdim\gR$.
\item Si $n\geq 0$, alors
\begin{equation} \label{feq2thValDim}
\vdim\gR\leq n\iff \Kdim \RXn \leq 2n.
\end{equation}
%
\item Dans le cas où $\gR$ est \qi, ceci est aussi \eqv à:\\
pour tous $x_1$, \dots, $x_n$ dans $\Frac\gR$, on a $\Kdim\Rxn\leq n$.
\end{enumerate}
\end{ftheorem}
%

\subsection{Deuxième approche: treillis valuatif et spectre valuatif, \(\Vdim\)}\label{fsubsecSpecVal}

Tout d'abord rappelons que concernant la dimension d'un \trdi ou de l'espace spectral dual, il y a trois approches \covs \cot \eqves. La première historiquement est celle définie par \citet{fJoy76}, reprise dans \citealt{fCP2001} et \cite{fCoq2009}. La deuxième provient de la notion de chaine potentielle d'\ideps, à la manière de ce qui est fait pour les anneaux commutatifs dans \cite{fLom06}. La troisième est basée sur la notion d'\id \gui{bord} (ou de filtre \gui{bord}) et donne lieu à une \dfn par récurrence. L'équivalence de ces notions est pour l'essentiel démontrée dans \cite{fCL2003} et traitée très en détail dans \cite{fCL2001-2018}.

Dans le cas d'un sous-anneau (intègre) \(\gR\) d'un corps discret \(\gK\), l'article \cite{fCoq2009} définit le treillis valuatif \(\Val (\gK,\gR)\) comme le \trdi qui traduit les règles valides pour le prédicat \(\Vr\)\footnote{\(\Vr\) rappelle \gui{Valuation ring}.} dans la \tdy  \(\sa{Val}(\gK,\gR)\) des \advs du corps~\(\gK\) qui contiennent \(\gR\) \footnote{Sans toutefois utiliser en tant que tel le langage des \tdys.}.
Enfin le treillis \(\Val (\gR)\) est une notation abrégée de \(\Val (\Frac\gR,\gR)\).

La \sad \(\sa{Val}(\gK,\gR)\) peut être décrite comme la \tdy construite sur la signature
\[(\cdot=0,
\Vr(\cdot)\mathrel{;}0,1,\cdot+\cdot,-\cdot,\cdot\times\cdot,(a)_{a\in\gK} )
\]
dans laquelle les \elts de \(\gK\) sont des constantes de la théorie\footnote{Pour la \tdy \sa{Val} toute nue, on supprime tout ce qui concerne $\gR$ et $\gK$.}.

Tout d'abord il y a les axiomes des \cdis non triviaux sur le langage des anneaux commutatifs.

\DeuxRegles{
\labu \(\vd 0=0\)
\labu \(\,\,x=0\vet y=0\vd x+y=0\)
\labu \(\,\,1=0 \vd \Bot \)
}
{
\labu \(\,\,x=0\vd xy=0\)
\labu \(\vd x=0\vou \Exists y \,xy=1\)
}

Ensuite on ajoute le diagramme de \(\gK\):

\DeuxRegles{
\labu \(\vd 0_\gK= 0\)
\labu \(\vd 1_\gK= 1\)
}
{
\labu \(\vd a+b=c\quad (\hbox{si } a+b=_\gK c)\)
\labu \(\vd ab= c\quad (\hbox{si } ab=_\gK c)\)
}

Enfin il y a les axiomes décrivant les \prts du prédicat  \(\Vr(x)\) qui signifie que \(x\) appartient à l'\adv supposé du corps \(\gK\).

\DeuxRegles{
\labu \(\vd \Vr(a)\quad (\hbox{si } a\in\gR)\)

\labu \(\,\,\Vr(x)\vet\Vr(y)\vd \Vr(x+y)\)
}
{
\labu \(\,\,\Vr(x)\vet\Vr(y)\vd \Vr(xy)\)
\labu \(\,\,xy=1\vd \Vr(x)\vou\Vr(y)\)
}

Le treillis \(\Val (\gK,\gR)\) est alors le \trdi  engendré par la \entrel
\(\vdash_{\Val,\gK,\gR}\)  sur \(\gK\eti\) définie par l'\eqvc

\vspace{-1em}
\begin{equation} \label{feq01}
\begin{aligned}
 x_1,\dots,x_n  &\,\,\vdash_{\Val,\gK,\gR}    y_1,\dots,y_m
  \qquad\quad     \equidef     \\[.1em]
   \Vr(x_1)\vet \dots\vet \Vr(x_n)&\vdi{\sA{Val}(\gK,\gR)} \Vr(y_1)\vou \dots\vou \Vr(y_m)
 \end{aligned}
\end{equation}

Enfin on note \(\Val (\gR)\) pour \(\Val (\Frac\gR,\gR)\).

Dans ce cadre, la dimension valuative de \(\gR\), que nous noterons \(\Vdim\gR\), est définie comme égale à \(\Kdim(\Val (\gR))\).

Le \tho 8 de l'article  \cite{fCoq2009} donne un \vst
sous la forme de l'\eqvc suivante (pour des \(y_i\) et \(x_j\in\gK\eti\))
\begin{multline*}
\Vr(x_1)\tsbf{,}\, \dots\tsbf{,}\, \Vr(x_n)\vdi{\sA{Val}(\gK,\gR)} \Vr(y_1)\MA{\tsbf{ ou }} \dots\MA{\tsbf{ ou }} \Vr(y_m)\\
\iff 1\in \gen{y_1^{-1},\dots,y_m^{-1}}\gR[x_1,\dots,x_n,y_1^{-1},\dots,y_m^{-1}]\text.
\end{multline*}
En chassant les dénominateurs on obtient la formulation \eqve suivante
\begin{equation} \label{feqVr}
y_1^{p_1}\cdots y_m^{p_m} =
\begin{aligned}[t]
&Q(x_1,\dots,x_n,y_1,\dots,y_m)\in\gR[\Xn,\Ym]\\
&\text{un \pol dont tous les \moms ont un degré}\\
&\text{en }  \uY  \text{ strictement inférieur à }  (p_1,\dots,p_m) .
\end{aligned}
\end{equation}

Que se passe-t-il si nous acceptons que certains \(x_i\) ou \(y_j\) soient éventuellement nuls? Comme la règle \(\vd \Vr(0)\) est valide,  la règle

\Regles{\labu \(\,\,\Vr(x_1)\vet \dots\vet \Vr(x_n)\vd \Vr(y_1)\vou \dots\vou \Vr(y_m)\)}

\noindent est toujours satisfaite si l'un des \(y_j\) est nul; et si l'un des
\(x_i\) est nul, elle est \eqve à la même règle où l'on a supprimé le \(\Vr(x_i)\) correspondant à la gauche du \(\vd\).
On constate les mêmes faits pour le \vst exprimé sous la forme \pref{feqVr}: si l'un des \(y_j\) est nul on prend \(Q=0\); si l'un des \(x_i\) est nul, il n'intervient pas dans \pref{feqVr}.  Ainsi le \vst sous la forme \pref{feqVr} est toujours valable, ce qui évite d'avoir à raisonner cas par cas.

Le treillis \(\Val \gR\) peut donc être caractérisé comme le \trdi  engendré par la \entrel
\(\vdash_{\Val\gR}\)  sur \(\gK\) définie par l'\eqvc
%
%
\begin{equation} \label {feqVr2}
 x_1,\dots,x_n  \vdash_{\Val\gR}    y_1,\dots,y_m\equidef
 \begin{aligned}[t]
   &\exists p_1,\dots,p_m\geq0\ \exists Q\in \gR[\uX,\uY]\enskip y_1^{p_1}\cdots y_m^{p_m} =\\
   &Q(x_1,\dots,x_n,y_1,\dots,y_m)\text{, les \moms de $Q$}\\
   &\text{dont le degré en $\uY$ est  $<(p_1,\dots,p_m)$.}
\end{aligned}
\end{equation}

\medskip Pour rendre les calculs à venir plus lisibles, nous introduisons le prédicat \[\Vp(x)\equidef \Exists u (ux=1\vet \Vr(u)).\]
En d'autres termes, nous ajoutons un prédicat \(\Vp(x)\) dans la signature avec les deux axiomes

\DeuxRegles{
\labu \(\,\,ux=1\vet \Vr(u)\vd \Vp(x)\)
}
{
\labu \(\,\,\Vp(x)\vd \Exists u \;(ux=1\vet \Vr(u)) \)
}

La nouvelle théorie est une extension conservative de la précédente. En outre on a les règles valides suivantes qui permettent de retrouver \(\Vr\) à partir de \(\Vp\)

\DeuxRegles{
  \labu $\vd \Vr(0)$
  \labu \makebox[0pt][l]{$\,\,\Vr(x)\vd x=0\MA{\tsbf{ ou }} \Exists u \;(ux=1\tsbf{,}\, \Vp(u))$}
}
{
\labu $\,\,ux=1\tsbf{,}\, \Vp(u)\vd \Vr(x)$}

On peut lire \(\Vp(x)\)  comme \(\Vr(1/x)\) en considérant que \(\Vr(1/0)\vd \Bot\) (effondrement de la théorie). Le prédicat \(\Vp(x)\) signifie que l'\elt \(x\) de \(\gK\) n'est pas un \elt résiduellement nul de \(\gV\).
Ce prédicat satisfait notamment les axiomes suivants dans la \tdy considérée:

\DeuxRegles{
\labu \(\vd \Vp(a)\quad (\hbox{si } a\in\Rti)\)
\labu \(\,\,\Vp(x+y)\vd \Vp(x)\vou\Vp(y)\)
\labu \(\,\,\Vp(x)\vet\Vp(y)\vd \Vp(xy)\)
}
{
\labu \(\,\,\Vp(0)\vd \Bot\)
\labu \(\,\,xy=1\vd \Vp(x)\vou\Vp(y)\)
}

Le \vst devient, cette fois-ci pour des \(x_i\) et \(y_j\in\gK\) sans restriction

\Regles{\labu \(\,\,\Vp(y_1)\vet \dots\vet \Vp(y_n)\vd \Vp(x_1)\vou \dots\vou \Vp(x_m)\)}
\vspace{-.3em}
\[\iff \qquad 1\in \gen{\xm}\gR[\xm,y_1^{-1},\dots,y_n^{-1}]\]
qu'il faut réécrire sous la forme suivante pour éviter les \(0^{-1}\): il existe des exposants $p_1,\dots,p_m$ et des \pols \(P_j\in\gR[\uX,\uY]\) tels que
\begin{multline} \label {feqV'}
  \exists p_1,\dots,p_m\enskip\exists P_1,\dots,P_n\in\gR[\uX,\uY]\enskip
  y_1^{p_1}\dots y_m^{p_m}= x_1P_1(\ux,\uy)+\dots+x_nP_n(\ux,\uy)\text,\\
  \text{où les multiexposants  $\uY$ dans les $P_j$'s sont tous $\leq(p_1,\dots,p_m)$.}
\end{multline}
 
Si l'un des $y_i$ est nul, l'\egt est automatiquement satisfaite car on peut prendre les~$P_j$ nuls.  

Ce prédicat \(\Vp\) est le prédicat \(\Nrn\) dans l'article \cite{fLom2000}
qui donne un \vst très \gnl.

\begin{fremark}
 \label{fremV'}
 On peut introduire le treillis
\(\Valp\gR\) associé au prédicat \(\Vp\). On peut démontrer alors que \(\Valp\gR\) est isomorphe au treillis opposé à \(\Val\gR\). Cela tient à ce que \(x\mt x^{-1}\) est une bijection de \(\gK\eti\) sur lui-même.
Dans cet article, nous utilisons seulement le fait que les \carns \pref{feqVr}
et \pref{feqV'} sont équivalentes.  \eoe
\end{fremark}


\medskip Nous introduirons le treillis valuatif d'un anneau arbitraire
dans la toute dernière section page~\pageref{fsubsec-vdim=Vdim2}.

\subsection{Troisième approche: ordre monomiaux rationnels gradués, \(\dimv\)}\label{fsubsecOmonRat}

On note \(<_{\mathrm{lex}}\) l'ordre monomial sur  \(\ZZ^n\) correspondant à l'ordre lexicographique.

Un \textsl{ordre monomial rationnel gradué} \(<_M\) sur \(\ZZ^n\) ou, de manière \eqve, sur les \moms de \(\gR[X_1^{\pm1}, \dots, X_1^{\pm1}]\) est défini au moyen d'une matrice \(M\in \Mat_n(\NN)\) inversible dans \(\Mat_n(\QQ)\) avec les \coes de la première ligne tous \(>0\), de la manière suivante:
\[
(e_1,\dots,e_n)<_M (f_1,\dots,f_n) \equidef
M\cdot \cmatrix{e_1\\ \vdots \\ e_n} <_{\mathrm{lex}} \cmatrix{f_1\\ \vdots\\f_n}
\]
Lorsque les \coes de la première ligne de \(M\) sont égaux à \(1\), l'ordre monomial \(<_M\) est un ordre subordonné au degré total.
L'ordre monomial \(<_{\mathrm{grlex}}\) \gui{gradué lexicographique} est
celui défini par la matrice
\[\cmatrix{
1&1&&\dots&1\\
1&0&0&\dots&0\\
0&1&0& &0\\
\vdots & && &\vdots\\
0&0&\dots& 1&0
}\]

\smallskip Dans \cite{fLom06}, la dimension de Krull d'un anneau arbitraire \(\gR\) est caractérisée \cot par l'équivalence entre \(\Kdim\gR\leq n\) et le fait que pour tous \(x_0,\dots,x_n\in\gR\) on a un \pol \(P\in\gR[X_0,\dots,X_n]\) qui annule \((x_0,\dots,x_n)\) et dont le plus petit \coe pour l'ordre lexicographique est égal à \(1\). Par exemple  \(\Kdim\gR\leq 1\) \ssi pour tous
 \(x_0,x_1\in\gR\) on peut trouver une \egt \(0=x_0^{e_0}(x_1^{e_1}(1+c_1x_1)+c_0x_0)\): ici les \(c_i\) sont des \elts de \(\gR\) ou tout aussi bien des \elts de \(\gR[x_0,x_1]\).

\smallskip Dans \cite{fKV2014}, les auteurs ont montré, en \clama, que pour un anneau \noe, on peut caractériser la \ddk de la même manière en utilisant un ordre monomial arbitraire.

\smallskip Dans \cite{fKY2020}, les auteurs ont montré, en \clama, que pour un anneau arbitraire, on peut caractériser la dimension valuative de la même manière à condition d'utiliser un ordre monomial rationnel gradué à la place de l'ordre lexicographique.

Autrement dit, nous pouvons les paraphraser en définissant l'inégalité \gui{\(\dimv\gR\leq n\)} pour \(n\geq 0\) comme suit.

\begin{fdefinition}\label{fdefKY1} considérons un ordre monomial rationnel gradué
\(<_M\), on demande que pour tous  \(x_0,\dots,x_n\in\gR\) on ait un \pol \(P\in\gR[X_0,\dots,X_n]\) qui annule \((x_0,\dots,x_n)\) et dont le plus petit \coe pour l'ordre \(<_M\) est égal à \(1\).
\end{fdefinition}

Le résultat dans~\cite{fKY2020} est alors le suivant.
\begin{ftheorem}  \phantomsection\label{fthKY1}
\begin{enumerate}
\item[]
\item Cette \dfn de \(\dimv\gR\leq n\) ne dépend pas de la matrice \(M\) considérée.
\item Elle équivaut en \clama au fait que la dimension valuative de~\(\gR\)
est \(\leq n\).
\end{enumerate}
\end{ftheorem}

Par convention \(\dimv\gR=-1\) signifie que l'anneau est trivial.

En fait, la \demo du \thref{fthKY1} dans \cite{fKY2020} est clairement \cov pour le cas d'un anneau intègre~\(\gR\): pour tout \(n\geq 0\) on a \cot l'\eqvc
\begin{equation} \label{feqvdim-dimv}
\vdim\gR\leq n \quad \text{(\dfn \ref{fdefiValdim}.1)} \iff \dimv\gR\leq n \quad \text{(définition~\ref{fdefKY1})}
\end{equation}

\newpage
\section{Équivalence \cov des trois \dfns \covs}\label{fsecdival3}

\subsection{\(\vdim=\dimv\)}\label{fsubsecvdim=dimv}

On a un premier lemme qui prolonge \pref{feqvdim-dimv} au cas \qi.

\begin{flemma} \label{flemvdim-dimvQi}
Pour un anneau \qi \(\gR\) on a l'\eqvc
\begin{equation} \label{feqvdim-dimvQi}
\vdim\gR\leq d \iff \dimv\gR\leq d
\end{equation}
\end{flemma}
%
\begin{proof}
On reprend la \prco de \pref{feqvdim-dimv} donnée dans le cas intègre et on utilise la machinerie \lgbe \elr\ \num1.
\end{proof}

Pour étendre l'\eqvc \cov \pref{feqvdim-dimv} du cas d'un anneau intègre au cas d'un anneau arbitraire, étant donné que dans le cas \gnl on a défini \(\vdim\gR\leq d\)
comme signifiant \(\vdim\Rmin\leq d\), et vu le lemme \ref{flemvdim-dimvQi}, il nous suffit de démontrer \cot l'\eqvc
\begin{equation} \label{feqdimv-dimv}
\dimv\gR\leq d \iff \dimv\Rmin\leq d
\end{equation}
Cela implique en particulier \cot l'analogue de l'\eqvc \pref{feq2thValDim}
pour \(\dimv\).

Cette  \eqvc \pref{feqdimv-dimv} se ramène aux deux lemmes suivants.
\begin{flemma} \label{flem-dimv-dimvred} On a toujours
\begin{equation} \label{feq-dimv-dimvred}
\dimv\gR\leq d \iff \dimv\Rred\leq d
\end{equation}
\end{flemma}
\facile

\begin{flemma} \label{flem-dimv-dimvred2} Soient \(\gR\) un anneau réduit et \(a\in\gR\). Alors on a
\begin{equation} \label{feq-dimv-dimvred2}
\dimv\gR\leq d \iff \dimv\gR_{\so a}\leq d
\end{equation}
\end{flemma}
%
\begin{proof}
L'implication \(\Rightarrow\) est assez simple. D'une part, l'implication \[\dimv\gR\leq d \Rightarrow \dimv(\gR/\fa)\leq d\] est claire pour tout quotient \(\gR/\fa\). Et d'autre part on voit que \[(\dimv\gB\leq d ,\;\dimv\gC \leq d) \Rightarrow \dimv(\gB\times \gC)\leq d.\]
Voyons l'implication réciproque. On considère \(x_0,\dots,x_d\in\gR\). On a tout d'abord un \pol \(P_1(X_0,\dots,X_d)\in\gR[\uX]\) de \coe minimum égal à \(c_1=1+y_1\) et
tel que \(P_1(x_0,\dots,x_d)=z_1\) avec \(y_1,z_1\in a\epr\). On a par ailleurs un \pol \(P_2(\uX)\) de \coe minimum  \(c_2=1+y_2\) et tel que \(P_2(\ux)=z_2\), avec \(y_2,z_2\in (a\epr)\epr\).
On a alors \[y_1y_2=y_1z_2=z_1y_2=z_1z_2=0.\]
Quitte à  multiplier \(P_1\) et~\(P_2\) par des \moms convenables, on peut supposer que leurs \moms initiaux coïncident. Alors \(Q_2=P_2-y_2P_1\)
a pour \coe minimum \((1+y_2)-y_2(1+y_1)=1\) et vérifie \(Q_2(\ux)=z_2\).
De même \(Q_1=P_1-y_1P_2\)
a pour \coe minimum \((1+y_1)-y_1(1+y_2)=1\) et vérifie \(Q_1(\ux)=z_1\). Donc le \pol \(Q_1Q_2\) nous convient.
\end{proof}
%

\subsection{\(\vdim=\Vdim\) dans le cas intègre}\label{fsubsec-vdimv=Vdim1}

\begin{flemma} \label{flem-Vdim<=vdim}
Pour un anneau intègre \(\gR\), et pour un entier \(n\geq -1\)
\[\Vdim\gR\leq n\Rightarrow\vdim\gR\leq n\]
\end{flemma}
%
\begin{proof}
L'article \cite{fCoq2009} montre d'une part que
\[\Vdim\gR\leq n\Rightarrow\Kdim\gR\leq n\]
\vspace{-2em}

\noindent et d'autre part que
\[\Vdim\gR\leq n\Rightarrow\Vdim\RX\leq n+1.\]
Donc  \(\Vdim\gR\leq n\Rightarrow\Kdim\RXn\leq 2n\). Dans \cite{fCACM} il est démontré que
\(\Kdim\RXn\leq 2n\Rightarrow\vdim\gR\leq n\).
\end{proof}

\medskip \noindent \textbf{Nous allons maintenant démontrer l'inégalité opposée.}

\smallskip\noindent
Nous allons démontrer l'implication  $\vdim\leq n\implies\Vdim\leq n$  en construisant des suites \cops. Pour comprendre la \demo, 
nous traiterons d'abord les cas  \(n=2,3,4\): lorsque \(n=2\), 
la suite \cop est fabriquée avec des \elts de la forme~\(\Vp(y)\), tandis que les cas \(n=3\) et \(n=4\) donnent des idées successives pour construire une suite \cop dans le \trdi engendré par ce type d'\elts.

\medskip  \noindent \fbox{\(\vdim\leq 2\Rightarrow\Vdim\leq 2\)}

\smallskip \noindent Supposons que l'on a $x_0,x_1,x_2$ non nuls dans le corps de fractions $\gK$ de $\gR$. On cherche $y_0,y_1,y_2 \in \gK $ (on sait que
$y_2$ est nul) tels que:
\begin{align}
\Vp(x_0) \vee \Vp(y_0) &=1\text,\label{feq:1}\\
\Vp(x_0) \wedge \Vp(y_0) &\leq \Vp(x_1) \vee \Vp(y_1)\text,\label{feq:2}\\
\Vp(x_1) \wedge \Vp(y_1) &\leq \Vp(x_2) \vee \Vp(y_2) =\Vp(x_2)\text{ (car $y_2=0$),}\label{feq:3}\\
\Vp(x_2) \wedge \Vp(y_2) &=0\text{ (c'est certifié par $y_2=0$).}\notag
\end{align}
\smallskip \noindent \eqref{feq:1} revient à dire que $1=\gen{x_0,y_0}$ dans $\gR[x_0,y_0]$.

\noindent \eqref{feq:2} revient à dire que $1=\gen{x_1,y_1}$ dans $\gR[x_0^{-1}, y_0^{-1}, x_1,y_1]$.

\noindent \eqref{feq:3} revient à dire que $1=\gen{x_2}$ dans $\gR[x_1^{-1}, y_1^{-1}, x_2]$.

\smallskip \noindent On utilise le fait que $\Kdim (\gR[x_0,x_1,x_2]) \leq 2$. On a un polynôme $P \in \gR[X_0,X_1,X_2]$ de
plus petit coefficient $1$ (pour l'ordre monomial lexicographique avec $X_2>X_1 > X_0$) qui s'annule  en $(x_0,x_1,x_2)$. Soit $X_2^{n}X_1^mX_0^{\ell}$ le plus petit monôme de $P$.  En divisant $P(x_0,x_1,x_2)$ par $x_2^{n}x_1^mx_0^{\ell}$, on obtient une égalité
$$ 1+ x_0f_0(x_0) + x_1 f_1(x_1,x_0^{\pm}) + x_2 f_2(x_2,x_1^{\pm},x_0^{\pm})=0,$$

\noindent où $f_0\in \gR[X_0]$, $f_1 \in \gR[X_1,X_0^{\pm}]$ et $f_2 \in \gR[X_2,X_1^{\pm},X_0^{\pm}]$ (le $x_0f_0(x_0)$ provient des monômes de $P$ autres que $M$ dont le degré en $X_2$ est
égal à $n$ et le degré en $X_1$ est
égal à $m$,  $x_1 f_1(x_1,x_0^{\pm})$ provient des autres monômes de $P$ dont le degré en $X_2$ est
égal à $n$, alors que $x_2 f_2(x_2,x_1^{\pm},x_0^{\pm})$ provient des monômes de $P$ dont le degré en $X_2$ est
$>n$). Pour  certains $r_0,r_1 \in \mathbb{N}$, on a:
$$ 1+ x_0f_0(x_0) + x_0^{r_0} \bigg( x_1  g_1(x_1,x_0^{-1}) + \frac{x_2}{x_1^{r_1}}g_2(x_2,x_1,x_0^{-1})\bigg)=0 ,
$$
où $g_1 \in \gR[X_1,X_0^{-1}]$ et $g_2 \in \gR[X_2,X_1,X_0^{-1}]$. On voit que $y_0= \frac{1+ x_0f(x_0)}{x_0^{r_0}}$ et $y_1=\frac{x_2}{x_1^{r_1}}$  conviennent (notons que $x_2=x_1^{r_1}y_1 \in \gR[ x_1,y_1] \subseteq \gR[x_0^{-1}, y_0^{-1}, x_1,y_1]$).

\smallskip \noindent Prenons comme exemple d'effondrement:
$$x_0 x_1^2x_2^2 + 2 x_0^2 x_1^2x_2^2 +3  x_1^4x_2^2  + x_0^2 x_1^5x_2^2 + 3x_2^3 +2 x_1x_2^3 + x_0^2 x_1^3x_2^4 = 0.
$$
En divisant par $x_0 x_1^2x_2^2$, on obtient une égalité
$$ 1+ 2x_0^2 + x_0\Big(x_1 \big(\frac{3x_1}{x_0^2} +x_1^2\big) + \frac{x_2}{x_1^2}\big(\frac{3}{x_0^2}+ \frac{2x_1}{x_0^2}+ x_2x_1^3\big)\Big)=0.
$$
On voit que $y_0= \frac{1+ 2x_0^2}{x_0}$ et $y_1=\frac{x_2}{x_1^2}$ conviennent.

\medskip  \noindent \fbox{\(\vdim\leq 3\Rightarrow\Vdim\leq 3\)}

\smallskip \noindent Soient $x_0,x_1,x_2,x_3$ non nuls dans le corps de fractions $\gK$ de $\gR$. On cherche ${\mathfrak u}_0,{\mathfrak u}_1,{\mathfrak u}_2,{\mathfrak u}_3$ dans le treillis distributif engendré par les $V'(x)$ qui forment une suite complémentaire de  $x_0,x_1,x_2,x_3$ (voir les inégalités  $(\ref{feqC2G})$ dans le \thref{fth-dico-trdi-spec-dim1}). Autrement dit: 
{\small\[
1=V'(x_0) \vee {\mathfrak u}_0,\, V'(x_0) \wedge  {\mathfrak u}_0 \leq  V'(x_1) \vee {\mathfrak u}_1,\ldots,\, V'(x_2) \wedge  {\mathfrak u}_2 \leq  V'(x_3) \vee {\mathfrak u}_3,\,  V'(x_3) \wedge {\mathfrak u}_3=0.
\]}

\noindent On propose de trouver les ${\mathfrak u}_i$ sous la forme 
\[
{\mathfrak u}_0=\Vp(y_0),{\mathfrak u}_1=\Vp(y_1) \wedge \Vp(x_0),\,{\mathfrak u}_2= \Vp(y_2) ,\,{\mathfrak u}_3= 0
\]  avec
$y_0,y_1,y_2 \in \gK$, tels que
\begin{align}
\Vp(x_0) \vee \Vp(y_0) &=1,\label{feq:31}\\
  \Vp(x_0) \wedge \Vp(y_0) &\leq \Vp(x_1) \vee \mathfrak u_1= \big( \Vp(x_1) \vee \Vp(y_1) \big) \wedge \big(\Vp(x_1) \vee \Vp(x_0)\big)\text{ ou aussi}\notag\\
  \Vp(x_0) \wedge  \Vp(y_0) &\leq  \Vp(x_1) \vee  \Vp(y_1)\text{ et}\label{feq:32}\\
  \Vp(x_0) \wedge  \Vp(y_0) &\leq \Vp(x_1) \vee \Vp(x_0)\text,\label{feq:32bis}\\
\Vp(x_1) \wedge \mathfrak u_1 &\leq \Vp(x_2) \vee \Vp(y_2)\text,\label{feq:33}\\
\Vp(x_2) \wedge \Vp(y_2) &\leq \Vp(x_3) \vee 
\mathfrak u_3=\Vp(x_3),\label{feq:34}\\
\Vp(x_3) \wedge \mathfrak u_3 &=0\text{ (c'est certifié par $\mathfrak u_3=0$).}\notag
\end{align}
\noindent \eqref{feq:31} revient à dire que $1\in\gen{x_0,y_0}$ dans $\gR[x_0,y_0]$.

\noindent \eqref{feq:32} revient à dire que $1\in\gen{x_1,y_1}$ dans $\gR[x_0^{-1}, y_0^{-1}, x_1,y_1]$.

\noindent \eqref{feq:32bis} est toujours satisfaite car \(\Vp(x_0) \wedge \Vp(y_0) \leq \Vp(x_0) \leq \Vp(x_1) \vee \Vp(x_0)\).

\noindent \eqref{feq:33} revient à dire que $1\in\gen{x_2,y_2}$ dans $\gR[x_1^{-1}, y_1^{-1},x_0^{-1}, x_2,y_2]$.

\noindent \eqref{feq:34} revient à dire que $1\in\gen{x_3}$ dans $\gR[x_2^{-1}, y_2^{-1}, x_3]$.

\smallskip \noindent On utilise le fait que $\Kdim (\gR[x_0,x_1,x_2,x_3]) \leq 3$. \\
On a un polynôme $P \in \gR[X_0,X_1,X_2,X_3]$ de
plus petit coefficient $1$ (pour l'ordre monomial lexicographique avec $X_3>X_2>X_1 > X_0$) qui s'annule  en $(x_0,x_1,x_2,x_3)$. Soit $X_3^{n}X_2^{m}X_1^pX_0^{q}$ le plus petit monôme de $P$.  En divisant $P(x_0,x_1,x_2,x_3)$ par $x_3^{m}x_2^nx_1^{p}x_0^{q}$, on obtient une égalité
\[  
1+ x_0f_0(x_0) + x_1 f_1(x_1,x_0^{\pm1}) + x_2 f_2(x_2,x_1^{\pm1},x_0^{\pm1}) + x_3 f_3(x_3,x_2^{\pm1},x_1^{\pm1},x_0^{\pm1}) =0,
\]
où $f_0\in \gR[X_0]$, $f_1 \in \gR[X_1,X_0^{\pm1}]$, $f_2 \in \gR[X_2,X_1^{\pm1},X_0^{\pm1}]$ et $f_3 \in \gR[X_3,X_2^{\pm1},\allowbreak X_1^{\pm1},\allowbreak X_0^{\pm1}]$:
\begin{itemize}
\item $x_0f_0(x_0)$ provient des monômes de $P$ autres que $M$ dont le degré en $X_3$ est
égal à $m$, le degré en $X_2$ est
égal à $n$ et le degré en $X_1$ est
égal à $p$,
\item $x_1 f_1(x_1,x_0^{\pm})$ provient des autres monômes de $P$ dont le degré en $X_3$ est égal à $m$ et  le degré en $X_2$ est égal à $n$,
\item  $x_2 f_2(x_2,x_1^{\pm},x_0^{\pm})$ provient des autres monômes de $P$ dont le degré en $X_3$ est égal à
$m$,
\item $x_3 f_3(x_3,x_2^{\pm},x_1^{\pm},x_0^{\pm})$ provient des monômes de $P$ dont le degré en $X_3$ est
$>m$)
\end{itemize}
Pour  certains $r_0,r_1 \in \mathbb{N}$, on a:
{\small
\begin{equation} \label{feq17}
1+ x_0f_0(x_0) + x_0^{r_0} \bigl( x_1 g_1(x_1,x_0^{-1}) + {x_1^{r_1}}\bigl(x_2g_2(x_2,x_1^{-1},x_0^{-1})+\frac{x_3}{x_2^{r_2}}\,g_3(x_3,x_2,x_1^{-1},x_0^{-1})\bigr)\bigr)=0 ,
\end{equation}}
où $g_1 \in \gR[X_1,X_0^{-1}]$, $g_2 \in \gR[X_2,X_1^{-1},X_0^{-1}]$ et $g_3\in\gR[X_3,X_2,X_1^{-1},X_0^{-1}]$.
\\
On pose $y_0= \dfrac{1+x_0f_0(x_0)}{x_0^{r_0}}$,
$y_1=\dfrac{y_0+x_1 g_1(x_1,x_0^{-1})}{x_1^{r_1}}$, $y_2=\dfrac{x_3}{x_2^{r_2}}$.
\\
Condition \eqref{feq:31}: $1= x_0^{r_0}y_0-x_0f_0(x_0)\in\gen{x_0,y_0}$ dans $\gR[x_0,y_0]$ (m\^eme si \(r_0=0\)).
\\
Condition \eqref{feq:32}: $y_0=y_1x_1^{r_1}- x_1  g_1(x_1,x_0^{-1})$,
 si $y_0\neq 0$ on divise par $y_0$
et l'on obtient $1\in\gen{x_1,y_1}$
dans $\gR[x_1,y_1,x_0^{-1},y_0^{-1}]$.
\\
Condition \eqref{feq:34}: $ y_2x_2^{r_2}=x_3$, (et on divise par $y_2x_2^{r_2}$  si $y_2\neq 0$)
\\
Condition \eqref{feq:33}: l'égalité (\ref{feq17}) se relit comme suit
\[
x_2g_2(x_2,x_1^{-1},x_0^{-1})+y_2g_3(y_2x_2^{r_2},x_2,x_1^{-1},x_0^{-1})  = -y_1
\]
Si $y_1\neq 0$ on divise par $y_1$.
\\
Par ailleurs on a aussi un bon résultat si $y_0=0$ ou $y_1=0$ (voir le commentaire après l'\egt (\ref{feqV'})).

\medskip  \noindent \fbox{\(\vdim\leq 4\Rightarrow\Vdim\leq 4\)}

\smallskip \noindent  Soient $x_0,x_1,x_2,x_3,x_4$ non nuls dans le corps de fractions $\gK$ de $\gR$. On cherche ${\mathfrak u}_0,{\mathfrak u}_1,{\mathfrak u}_2,{\mathfrak u}_3,\allowbreak{\mathfrak u}_4$ dans le treillis distributif engendré par les $V'(x)$ qui forment une suite complémentaire de  $x_0,x_1,x_2,x_3,x_4$ (voir les inégalités $(\ref{feqC2G})$ dans le \thref{fth-dico-trdi-spec-dim1}). Autrement dit: 
\[
1=V'(x_0) \vee {\mathfrak u}_0,\, V'(x_0) \wedge  {\mathfrak u}_0 \leq  V'(x_1) \vee {\mathfrak u}_1,\ldots,\, V'(x_3) \wedge  {\mathfrak u}_3 \leq  V'(x_4) \vee {\mathfrak u}_4, ,\,  V'(x_4) \wedge {\mathfrak u}_4=0.
\]
On propose de trouver les ${\mathfrak u}_i$ sous la forme
{\small
\[{\mathfrak u}_0=\Vp(y_0),{\mathfrak u}_1=\Vp(y_1) \wedge \Vp(x_0),{\mathfrak u}_2= V'(y_2) \wedge V'(x_1) \wedge V'(x_0),{\mathfrak u}_3= \Vp(y_3),{\mathfrak u}_4= 0   
\]}

\noindent avec
$y_0,y_1,y_2,y_3 \in \gK$, tels que

\smallskip \noindent 
\begin{align}
V'(x_0) \vee V'(y_0) &=1\text,\label{feq:41}\\
V'(x_0) \wedge V'(y_0) &\leq V'(x_1) \vee \mathfrak u_1\text{, ou aussi}\notag\\
V'(x_0) \wedge V'(y_0) &\leq V'(x_1) \vee V'(y_1)\text{,}\label{feq:42}\\
V'(x_1) \wedge \mathfrak u_1 &\leq V'(x_2) \vee \mathfrak u_2\text{, ou aussi}\notag\\
V'(x_1) \wedge V'(y_1) \wedge V'(x_0) &\leq V'(x_2) \vee V'(y_2)\text{ et}\label{feq:43}\\
V'(x_2) \wedge \mathfrak u_2 &\leq V'(x_3) \vee V'(y_3)\text,\label{feq:44}\\
V'(x_3) \wedge V'(y_3) &\leq V'(x_4) \vee \mathfrak u_4= V'(x_4)\text,\label{feq:45}\\
V'(x_4) \wedge \mathfrak u_4 &=0\text{ (c'est certifié par $\mathfrak u_4=0$).}\notag
\end{align}
\noindent \eqref{feq:41} revient à dire que $1\in\gen{x_0,y_0}$ dans $\R[x_0,y_0]$.

\noindent \eqref{feq:42} revient à dire que $1\in\gen{x_1,y_1}$ dans $\R[y_0^{-1},x_0^{-1} , x_1,y_1]$.

\noindent \eqref{feq:43} revient à dire que $1\in\gen{x_2,y_2}$ dans $\R[y_1^{-1},x_1^{-1},x_0^{-1}, x_2,y_2]$.

\noindent \eqref{feq:44} revient à dire que $1\in\gen{x_3,y_3}$ dans $\R[y_2^{-1},x_2^{-1},x_1^{-1},x_0^{-1}, x_3,y_3]$.

\noindent \eqref{feq:45} revient à dire que $1\in\gen{x_4}$ dans $\R[x_3^{-1}, y_3^{-1}, x_4]$.

\medskip

\noindent On utilise le fait que ${\rm Kdim}\; \R[x_0,x_1,x_2,x_3,x_4] \leq 4$. On a un polynôme $P \in \alb\R[X_0,\allowbreak\dots,X_4]$ de
plus petit coefficient $1$ (pour l'ordre monomial lexicographique avec $X_4>\cdots > X_0$) qui s'annule  en $(x_0,x_1,x_2,x_3,x_4)$. \\
Soit $X_4^{\ell}X_3^{n}X_2^{m}X_1^pX_0^{q}$ le plus petit monôme de $P$.  En divisant $P(x_0,x_1,x_2,x_3,x_4)$ par $x_4^{\ell}x_3^{m}x_2^nx_1^{p}x_0^{q}$, on obtient une égalité
\begin{multline*}
  1+ x_0f_0(x_0) + x_1 f_1(x_1,x_0^{\pm1}) + x_2 f_2(x_2,x_1^{\pm1},x_0^{\pm1}) +x_3 f_3(x_3,x_2^{\pm1},x_1^{\pm1},x_0^{\pm1})\\
  + x_4 f_4(x_4,x_3^{\pm1},x_2^{\pm1},x_1^{\pm1},x_0^{\pm1})=0\text,
\end{multline*}
o\`u $f_0\in \R[X_0]$, $f_1 \in \R[X_1,X_0^{\pm}]$, $f_2 \in \R[X_2,X_1^{\pm},X_0^{\pm}]$, $f_3 \in \R[X_3,X_2^{\pm},X_1^{\pm},X_0^{\pm}]$ et $f_4 \in \R[X_4,X_3^{\pm},X_2^{\pm},X_1^{\pm},X_0^{\pm}]$:
\begin{itemize}
\item $x_0f_0(x_0)$ provient des monômes de $P$ autres que $M$ dont le degré en $X_4$ est
égal à~$\ell$, le degré en $X_3$ est
égal à $m$, le degré en $X_2$ est
égal à $n$ et le degré en $X_1$ est
égal à $p$,
\item $x_1 f_1(x_1,x_0^{\pm})$ provient des autres monômes de $P$ dont le degré en $X_4$ est
égal à $\ell$, le degré en $X_3$ est
égal à $m$, le degré en $X_2$ est
égal à $n$,
\item  $x_2 f_2(x_2,x_1^{\pm},x_0^{\pm})$ provient des autres monômes de $P$ dont le degré en $X_4$ est
égal à~$\ell$ et le degré en $X_3$ est
égal à $m$,

\item $x_3 f_3(x_3,x_2^{\pm},x_1^{\pm},x_0^{\pm})$ provient des monômes de $P$ dont le degré en $X_4$ est
égal à $\ell$,
\item $x_4 f_4(x_4,x_3^{\pm},x_2^{\pm},x_1^{\pm},x_0^{\pm})$ provient des monômes de $P$ dont le degré en $X_4$ est
$>\ell$.
\end{itemize}
Pour  certains $r_0,r_1,r_2,r_3 \in \mathbb{N}$, on a:
\begin{multline}
  1+ x_0f_0(x_0) + x_0^{r_0}\Bigl( x_1 g_1(x_1,x_0^{-1}) + {x_1^{r_1}}\Bigl(x_2g_2(x_2,x_1^{-1},x_0^{-1})\\
  +x_2^{r_2}\bigl(x_3g_3(x_3,x_2^{-1},x_1^{-1},x_0^{-1})+\frac{x_4}{x_3^{r_3}}\,g_4(x_4,x_3,x_2^{-1},x_1^{-1},x_0^{-1})\bigr)\Bigr)\Bigr)=0,\label{feq:18}
\end{multline}
o\`u $g_1 \in \R[X_1,X_0^{-1}]$, $g_2 \in \R[X_2,X_1^{-1},X_0^{-1}]$, $g_3\in\R[X_3,X_2^{-1},X_1^{-1},X_0^{-1}]$ et
$g_4\in\R[X_4,\allowbreak X_3,X_2^{-1},X_1^{-1},X_0^{-1}]$.
\\
On pose $y_0= \frac{1+x_0f_0(x_0)}{x_0^{r_0}}$,
$y_1=\frac{y_0+x_1  g_1(x_1,x_0^{-1})}{x_1^{r_1}}$, $y_2=\frac{y_1+x_2  g_2(x_2,x_1^{-1},x_0^{-1})}{x_2^{r_2}}$ et $y_3=\frac{x_4}{x_3^{r_3}}$.
\\
Condition \eqref{feq:41}: $1= x_0^{r_0}y_0-x_0f_0(x_0)\in\gen{x_0,y_0}$ dans $\R[x_0,y_0]$ (m\^eme si \(r_0=0\)).
\\
Condition \eqref{feq:42}: $y_0=y_1x_1^{r_1}- x_1  g_1(x_1,x_0^{-1})$,
 si $y_0\neq 0$ on divise par $y_0$
et l'on obtient $1\in\gen{x_1,y_1}$
dans $\R[x_1,y_1,x_0^{-1},y_0^{-1}]$.
\\
Condition \eqref{feq:43}: $y_1=y_2x_2^{r_2}- x_2  g_2(x_2,x_1^{-1},x_0^{-1})$,
 si $y_1\neq 0$ on divise par $y_1$
et l'on obtient $1\in\gen{x_2,y_2}$ dans $\R[y_1^{-1},x_1^{-1},x_0^{-1}, x_2,y_2]$.
\\
Condition \eqref{feq:44}: l'égalité \eqref{feq:18} se relit
\[
x_3g_3(x_3,x_2^{-1},x_1^{-1},x_0^{-1})+y_3g_4(y_3x_3^{r_3},x_3,x_2^{-1},x_1^{-1},x_0^{-1})  = -y_2
\]
Si $y_2\neq 0$ on divise par $y_2$ et l'on obtient $1\in\gen{x_3,y_3}$ dans $\R[y_2^{-1},x_2^{-1},x_1^{-1},x_0^{-1}, x_3,y_3]$.
\\
Condition \eqref{feq:45}: $ y_3x_3^{r_3}=x_4$, (et on divise par $y_3x_3^{r_3}$  si $y_3\neq 0$).

\medskip  \noindent \fbox{\(\vdim\leq n\Rightarrow\Vdim\leq n\)}

\smallskip \noindent  Soient $x_0,\dots,x_n$ non nuls dans le corps de fractions $\gK$ de $\gR$. 
On cherche ${\mathfrak u}_0,{\mathfrak u}_1,\dots,{\mathfrak u}_n$ dans le treillis distributif engendré par les $V'(x)$ qui forment une suite complémentaire de   $x_0,x_1,\dots,x_n$. Autrement dit: 
{\small\[
1=V'(x_0) \vee {\mathfrak u}_0,\, V'(x_0) \wedge  {\mathfrak u}_0 \leq  V'(x_1) \vee {\mathfrak u}_1,\ldots,\,V'(x_{n-1}) \wedge  {\mathfrak u}_{n-1} \leq  V'(x_n) \vee {\mathfrak u}_n ,\,  V'(x_n) \wedge {\mathfrak u}_n=0.
\]
}

\noindent On utilise le fait que ${\rm Kdim}\; \R[x_0,\ldots,x_n] \leq n$. On a un polynôme $P \in \R[X_0,\ldots,X_n]$ de
plus petit coefficient $1$ (pour l'ordre monomial lexicographique avec $X_n>X_{n-1}> \cdots > X_0$) qui s'annule  en $(x_0,\ldots,x_n)$. Soit $X_n^{q_n} \cdots X_0^{q_0}$ le plus petit monôme de~$P$.  En divisant $P(x_0,\ldots,x_n)$ par $x_n^{q_n} \cdots x_0^{q_0}$, on obtient une égalité
\[1+ x_0f_0(x_0) + x_1 f_1(x_1,x_0^{\pm}) + \ldots +
 x_n f_n(x_n,x_{n-1}^{\pm},\ldots,x_0^{\pm})=0,
\]
o\`u $f_0\in \R[X_0], \,f_1 \in \R[X_1,X_0^{\pm}],\ldots, \, f_n \in \R[X_n,X_{n-1}^{\pm},\ldots,X_0^{\pm}]$.

\medskip

\noindent Pour  certains $r_0,\ldots,r_n \in \mathbb{N}$, on a:
\[ 
\begin{array}{cl} 
1+ x_0f_0(x_0) + x_0^{r_0} \bigg( x_1  g_1(x_1,x_0^{-1}) +\cdots + x_{n-3}^{r_{n-3}}\Big( x_{n-2}g_{n-2}(x_{n-2},x_{n-3}^{-1},\ldots,x_0^{-1})
\\
+ x_{n-2}^{r_{n-2}}\big(x_{n-1}g_{n-1}(x_{n-1},x_{n-2}^{-1},\ldots,x_0^{-1})+
\frac{x_n}{x_{n-1}^{r_{n-1}}}\,g_n(x_n,x_{n-1},x_{n-2}^{-1},\ldots x_0^{-1}) \cdots \big) \Big)\bigg)    &=0, 
  \end{array}
\]
 o\`u $g_1 \in \R[X_1,X_0^{-1}],\ldots, \, g_{n-1}\in\R[X_{n-1},X_{n-2}^{-1},\ldots,X_0^r{-1}],$
$g_n\in\R[X_n,X_{n-1}, X_{n-2}^{-1},\ldots,\alb X_0^{-1}]$.
\\
On pose $y_0= \frac{1+x_0f_0(x_0)}{x_0^{r_0}},\,
y_1=\frac{y_0+x_1  g_1(x_1,x_0^{-1})}{x_1^{r_1}}, \ldots,\,y_{n-2}=\frac{y_{n-3}+x_{n-2}  g_{n-2}(x_{n-2},x_{n-3}^{-1},\ldots,x_0^{-1})}{x_{n-2}^{r_{n-2}}}$ et $y_{n-1}=\frac{x_n}{x_{n-1}^{r_{n-1}}}$.

\smallskip \noindent Il suffit alors de prendre ${\mathfrak u}_0=V'(y_0),\, {\mathfrak u}_1=V'(y_1)\wedge V'(x_0),{\mathfrak u}_2=V'(y_2)\wedge V'(x_1) \wedge V'(x_0),\ldots,
{\mathfrak u}_{n-2}=V'(y_{n-2})\wedge V'(x_{n-3}) \wedge \cdots \wedge V'(x_0),\, {\mathfrak u}_{n-1}=V'(y_{n-1})$ et  ${\mathfrak u}_n=0$.

\subsection{\(\vdim=\Vdim\) dans le cas général}\label{fsubsec-vdim=Vdim2}

On note que lorsque \(\gR\) est un anneau \qi, l'anneau \(\Frac(\gR)\)
est un anneau \zedr. Par ailleurs un \cdi est un anneau \zedr dans lequel tout \idm est égal à \(0\) ou \(1\).

Nous commençons alors par la remarque suivante qui découle de la machinerie \cov\ \elr\ \num 1.
\begin{flemma} \label{fVdimqi}
Soit \(\gR\) un anneau \qi. Définissons \(\Val(\gR):=\Val(\Frac(\gR),\gR)\)  et \(\Vdim(\gR)=\Kdim(\Val(\gR))\) comme dans \pref{feq01} en remplaçant \(\gK\) par \(\Frac(\gR)\).
Alors on obtient l'\egt \(\vdim(\gR)=\Vdim(\gR)\) comme dans le cas intègre.
\end{flemma}

Dans la suite de ce paragraphe, nous ne donnons pas de démonstration.

Nous renvoyons
à l'étude  \gnle \cite{fLM2022}.

\smallskip
Nous définissons une \tdy \sa{val} comme suit. On considère la signature
$$\Sigma_\mathrm{val}=(\,\cdot\di\cdot  \mathrel{;} \cdot+\cdot, \cdot\times\cdot,-\,\cdot,0,1\,)$$
Les axiomes sont les suivants.

\DeuxRegles
{
\lab{Col$_{\val}$} $\,\, 0\di 1 \vd \Bot$ \quad (effondrement)
\fLab{av1}  $\vd 1 \di  -1$
\fLab{av2} $\,\,a \di  b \Vd ac \di  bc$
\fLab{Av1} $\,\,a \di  b \vet b \di  c \Vd a \di  c$
}
{
\fLab{Av2} $\,\,a \di  b \vet a \di  c \Vd a \di  b + c$
\fLab{AV1} $\vd a \di  b \vou b\di a$
\fLab{AV2} $\,\,ax \di  bx  \Vd a \di  b \vou 0 \di x$
}

L'\egt \(x=0\) est définie comme une abrviation de \(x\di 0\).

\begin{fdefinition} \label{fdefivalkK}~
\begin{enumerate}
\item Si \(\gR\) est un anneau commutatif, la \sad $\sa{val}(\gR)$ est obtenue en prenant comme \pn le diagramme positif de l'anneau \(\gR\).
\item Si $\gk\subseteq \gR$ sont deux anneaux\footnote{Nous utilisons \(\gk\) comme petit anneau pour nous référer à l'intuition donnée dans la situation fréquente où \(\gk\) est un \cdi.
}, ou plus \gnlt si $\varphi:\gk\to\gR$ est une \alg, on note $\sa{val}(\gk,\gR)$ la \sad  dont la présentation est donnée par
\begin{itemize}
\item le diagramme positif de~$\gR$ comme anneau commutatif;
\item les axiomes
$\vd 1 \di \varphi(x)$ pour les \elts $x$ de $\gk$.
%
\end{itemize}

%
%
\end{enumerate}
\end{fdefinition}

Les deux \sads $\sa{val}(\gR)$ et $\sa{val}(\gZ,\gR)$, où \(\gZ\) est le plus petit sous-anneau de \(\gR\), sont canoniquement isomorphes.

\begin{fdefinition} \label{fdefivalkK0}
Soit $\gk$ un sous-anneau d'un anneau $\gR$.
On définit le \trdi $\val(\gR,\gk)$ par la \entrel 
 $\vdash_{\gR,\gk,\mathrm{val}}$ sur l'ensemble $\gR\times\gR$ donnée par l'\eqvc suivante. 
\begin{equation} \label {feqvalkK}
\begin{aligned} 
 (a_1,b_1),\dots,(a_n,b_n) &\,\vdash_{\gR,\gk,\mathrm{val}}\, (c_1,d_1),\dots,(c_m,d_m)\\ 
\equidef\quad  a_1\di b_1\tsbf,\dots\tsbf,\, a_n\di b_n &\Vdi{\sA{val}(\gR,\gk)} c_1\di d_1\tsbf{ or } \dots\tsbf{ or } c_m\di d_m\text.
 \end{aligned}
\end{equation}
\end{fdefinition}

On peut démontrer le résultat suivant.

\begin{flemma} \label{flemvalVal}
Soit \(\gR\)  un anneau intègre de corps de fraction \(\gK\). On a des morphismes naturels \(\Val(\gR,\gK)\to\val(\gR,\gK)\) et \(\val(\gR,\gR)\to\val(\gR,\gK)\). Ce sont des \isos.
\end{flemma}

La \dfn suivante est donc raisonnable. Nous allons voir qu'elle coïncide avec celle du lemme \ref{fVdimqi} dans le cas des anneaux \qis.

\begin{fdefinition} \label{fdefiVdimgnle}
Pour un anneau commutatif \(\gR\) arbitraire on définit \(\Vdim(\gR)\leq n\) par
\(\Kdim(\val(\gR,\gR))\leq n\).
\end{fdefinition}

Cette dimension coïncide avec celle déjà définie lorsque \(\gR\) est intègre. Mais elle n'est pas en \gnl égale à la dimension du treillis \(\val(\gR,\Frac(\gR))\).

De notre point de vue, cela signifie que \(\Frac(\Rmin)\) est un bien meilleur substitut que \(\Frac(\gR)\)  au corps de fractions lorsque \(\gR\)
n'est pas un anneau intègre. En fait \(\Rmin\) coïncide avec \(\Frac(\gR)\)
seulement pour les anneaux \qis.

Enfin on peut démontrer le \tho suivant.

\begin{ftheorem} \label{fthvdimAmin}
Les \trdis \[\val(\gR,\gR)  \hbox{ et }
 \val(\Rmin,\Rmin)\simeq\val(\Rmin,\Frac(\Rmin))\]
ont même dimension de Krull.
\end{ftheorem}

Avec le lemme \ref{fVdimqi} et les \dfns \ref{fdefivalkK} et \ref{fdefiVdimgnle} cela termine le travail.

\smallskip \noindent \textsl{Note}. On aurait pu définir directement \(\Vdim(\gR)=\Vdim(\Rmin)\) sans utiliser
la théorie \sa{val}, mais cela ressemblait un peu trop à priori à une \dfn ad hoc, car \(\Vdim(\gR)\) n'a pas de \dfn naturelle directe pour un anneau arbitraire si on utilise
uniquement la théorie \sa{Val}.

\bibliographystyle{plainnat-fr}

\begin{thebibliography}{34}
\providecommand{\natexlab}[1]{#1}
\providecommand{\url}[1]{\texttt{#1}}
\expandafter\ifx\csname urlstyle\endcsname\relax
  \providecommand{\doi}[1]{doi: #1}\else
  \providecommand{\doi}{doi: \begingroup \urlstyle{rm}\Url}\fi

\bibitem[{Bezem} and {Coquand}(2005)]{BC2005}
Marc {Bezem} and Thierry {Coquand}.
\newblock Automating coherent logic.
\newblock In Geoff Sutcliffe and Andrei Voronkov, editors, \emph{Logic for
  programming, artificial intelligence, and reasoning: 12th international
  conference, LPAR 2005, Montego Bay, Jamaica, December 2--6, 2005:
  proceedings}, Lecture Notes in Computer Science, 3835, pages 246--260.
  Berlin: Springer, 2005.
\newblock \doi{10.1007/11591191\_18}.

\bibitem[Bishop(1967)]{Bi67}
Errett Bishop.
\newblock \emph{Foundations of constructive analysis}.
\newblock McGraw-Hill, New York, 1967.

\bibitem[Bishop and Bridges(1985)]{BB85}
Errett Bishop and Douglas Bridges.
\newblock \emph{Constructive analysis}.
\newblock Grundlehren der mathe\-ma\-tischen Wissenschaften, 279.
  Springer-Verlag, Berlin, 1985.
\newblock \doi{10.1007/978-3-642-61667-9}.

\bibitem[Bridges and Richman(1987)]{BR1987}
Douglas Bridges and Fred Richman.
\newblock \emph{Varieties of constructive mathematics}.
\newblock London Mathematical Society Lecture Note Series, 97. Cambridge
  University Press, Cambridge, 1987.
\newblock \doi{10.1017/CBO9780511565663}.

\bibitem[{Cahen}(1990)]{Cah90}
Paul-Jean {Cahen}.
\newblock Construction {B}, {I}, {D} et anneaux localement ou r\'esiduellement
  de {J}affard.
\newblock \emph{{Arch. Math.}}, 54\penalty0 (2):\penalty0 125--141, 1990.
\newblock \doi{10.1007/BF01198107}.

\bibitem[Cederquist and Coquand(2000)]{CC00}
Jan Cederquist and Thierry Coquand.
\newblock Entailment relations and distributive lattices.
\newblock In Samuel~R. Buss, Petr H{\'a}jek, and Pavel Pudl{\'a}k, editors,
  \emph{Logic {C}olloquium '98: proceedings of the annual European summer
  meeting of the Association for symbolic logic, held in Prague, Czech
  Republic, August 9--15, 1998}, Lecture Notes in Logic, 13, pages 127--139.
  Association for Symbolic Logic, Urbana, 2000.

\bibitem[{Coquand}(2005)]{Coq2005}
Thierry {Coquand}.
\newblock A completeness proof for geometrical logic.
\newblock In Petr Hájek, Luis Valdés-Villanueva, and Dag Westerståhl,
  editors, \emph{Logic, methodology and philosophy of science: proceedings of
  the twelth international congress: {O}viedo, {S}pain, {A}ugust 2003}, pages
  79--89. London: King's College Publications, 2005.

\bibitem[Coquand(2009)]{Coq2009}
Thierry Coquand.
\newblock Space of valuations.
\newblock \emph{Ann. Pure Appl. Logic}, 157\penalty0 (2--3):\penalty0 97--109,
  2009.
\newblock \doi{10.1016/j.apal.2008.09.003}.

\bibitem[Coquand and Lombardi(2003)]{CL2003}
Thierry Coquand and Henri Lombardi.
\newblock Hidden constructions in abstract algebra: {K}rull dimension of
  distributive lattices and commutative rings.
\newblock In Marco Fontana, Salah-Eddine Kabbaj, and Sylvia Wiegand, editors,
  \emph{Commutative ring theory and applications: proceedings of the fourth
  international conference ({F}ez, 2001)}, Lecture Notes in Pure and Applied
  Mathematics, 231, pages 477--499. Dekker, New York, 2003.
\newblock \href{https://arxiv.org/abs/1801.00097}{\nolinkurl{arXiv:1801.00097}}.

\bibitem[Coquand and Lombardi(2006)]{CL05}
Thierry Coquand and Henri Lombardi.
\newblock A logical approach to abstract algebra.
\newblock \emph{Math. Structures Comput. Sci.}, 16\penalty0 (5):\penalty0
  885--900, 2006.
\newblock \doi{10.1017/S0960129506005627}.

\bibitem[Coquand and Lombardi(2018)]{CL2001-2018}
Thierry Coquand and Henri Lombardi.
\newblock Constructions cachées en algèbre abstraite~: dimension de {K}rull,
  {G}oing up, {G}oing down.
\newblock Technical report, Laboratoire de math{\'e}matiques de Besançon,
  Universit{\'e} de Franche-Comt{\'e}, 2018.
\newblock \href{https://arxiv.org/abs/1712.04728}{\nolinkurl{arXiv:1712.04728}}.
\newblock Update of a preprint from 2001.

\bibitem[Coquand and Persson(2001)]{CP2001}
Thierry Coquand and Henrik Persson.
\newblock Valuations and {D}edekind's {P}rague theorem.
\newblock \emph{J. Pure Appl. Algebra}, 155\penalty0 (2-3):\penalty0 121--129,
  2001.
\newblock \doi{10.1016/S0022-4049(99)00095-X}.

\bibitem[Coste et~al.(2001)Coste, Lombardi, and Roy]{CLR01}
Michel Coste, Henri Lombardi, and Marie-Fran{\c{c}}oise Roy.
\newblock Dynamical method in algebra: effective {N}ullstellens\"atze.
\newblock \emph{Ann. Pure Appl. Logic}, 111\penalty0 (3):\penalty0 203--256,
  2001.
\newblock \doi{10.1016/S0168-0072(01)00026-4}.

\bibitem[Della~Dora et~al.(1985)Della~Dora, Dicrescenzo, and Duval]{D5}
Jean Della~Dora, Claire Dicrescenzo, and Dominique Duval.
\newblock About a new method for computing in algebraic number fields.
\newblock In Bob~F. Caviness, editor, \emph{{EUROCAL} '85. {E}uropean
  conference on computer algebra, {L}inz, {A}ustria, {A}pril 1--3, 1985:
  proceedings. {V}ol.~2: research contributions}, Lecture Notes in Computer
  Science, 204, pages 289--290. Springer, Berlin, 1985.

\bibitem[{Hochster}(1969)]{Hoc1969}
M.~{Hochster}.
\newblock {Prime ideal structure in commutative rings.}
\newblock \emph{{Trans. Amer. Math. Soc.}}, 142:\penalty0 43--60, 1969.
\newblock \doi{10.2307/1995344}.

\bibitem[Joyal(1971)]{Joy71}
Andr{\'e} Joyal.
\newblock Spectral spaces and distributive lattices.
\newblock \emph{Notices Amer. Math. Soc.}, 18\penalty0 (2):\penalty0 393--394,
  1971.
\newblock \url{https://www.ams.org/journals/notices/197102/197102FullIssue.pdf}.

\bibitem[Joyal(1975)]{Joy75}
Andr\'e Joyal.
\newblock Les th\'eor\`emes de {C}hevalley-{T}arski et remarques sur
  l'alg\`ebre constructive.
\newblock \emph{Cah. Topol. G\'eom. Diff\'er.}, 16\penalty0 (3):\penalty0
  256--258, 1975.
\newblock \url{http://www.numdam.org/item/CTGDC_1975__16_3_217_0}.

\bibitem[Kemper and Viet~Trung(2014)]{KV2014}
Gregor Kemper and Ngo Viet~Trung.
\newblock Krull dimension and monomial orders.
\newblock \emph{J. Algebra}, 399:\penalty0 782--800, 2014.
\newblock \doi{10.1016/j.jalgebra.2013.10.005}.

\bibitem[{Kemper} and {Yengui}(2020)]{KY2020}
Gregor {Kemper} and Ihsen {Yengui}.
\newblock {Valuative dimension and monomial orders.}
\newblock \emph{{J. Algebra}}, 557:\penalty0 278--288, 2020.
\newblock \doi{10.1016/j.jalgebra.2020.04.017}.

\bibitem[Lombardi(1998)]{Lom98}
Henri Lombardi.
\newblock Relecture constructive de la th\'eorie d'{A}rtin-{S}chreier.
\newblock \emph{Ann. Pure Appl. Logic}, 91\penalty0 (1):\penalty0 59--92, 1998.
\newblock \doi{10.1016/S0168-0072(97)80700-2}.

\bibitem[Lombardi(2000)]{Lom2000}
Henri Lombardi.
\newblock Une g\'en\'eralisation du {P}ositivstellensatz pour les corps
  valu\'es alg\'ebriquement clos.
\newblock \emph{C. R. Acad. Sci. Paris S\'er. I Math.}, 331\penalty0
  (5):\penalty0 345--348, 2000.
\newblock \doi{10.1016/S0764-4442(00)01637-2}.

\bibitem[Lombardi(2002)]{Lom02}
Henri Lombardi.
\newblock Dimension de {K}rull, {N}ullstellens\"atze et \'evaluation dynamique.
\newblock \emph{Math. Z.}, 242\penalty0 (1):\penalty0 23--46, 2002.
\newblock \doi{10.1007/s002090100305}.

\bibitem[Lombardi(2006)]{Lom06}
Henri Lombardi.
\newblock Structures alg\'ebriques dynamiques, espaces topologiques sans points
  et programme de {H}ilbert.
\newblock \emph{Ann. Pure Appl. Logic}, 137\penalty0 (1--3):\penalty0 256--290,
  2006.
\newblock \doi{10.1016/j.apal.2005.05.023}.

\bibitem[Lombardi(2020)]{Lom2020}
Henri Lombardi.
\newblock Spectral spaces versus distributive lattices: a dictionary.
\newblock In Alberto Facchini, Marco Fontana, Alfred Geroldinger, and Bruce
  Olberding, editors, \emph{Advances in rings, modules and factorizations:
  selected papers based on the presentations at the international conference on
  rings and factorizations, {G}raz, {A}ustria, {F}ebruary 19--23, 2018},
  Springer proceedings in mathematics \& statistics, 321, pages 223--245. Cham:
  Springer, 2020.
\newblock \doi{10.1007/978-3-030-43416-8\_13}.

\bibitem[Lombardi(2024)]{Lom-tgac}
Henri Lombardi.
\newblock Théories géométriques pour l'algèbre constructive.
\newblock \url{http://hlombardi.free.fr/TGM.pdf}, 2024.

\bibitem[Lombardi and Mahboubi(2023)]{LM2022}
Henri Lombardi and Assia Mahboubi.
\newblock Valuative lattices and spectra.
\newblock In Jean-Luc Chabert, Marco Fontana, Sophie Frisch, Sarah Glaz, and
  Keith Johnson, editors, \emph{Algebraic, number theoretic, and topological
  aspects of ring theory}, pages 275--341. Springer, Cham, 2023.
\newblock \doi{10.1007/978-3-031-28847-0\_17}.

\bibitem[Lombardi and Quitté(2015)]{CACM}
Henri Lombardi and Claude Quitté.
\newblock \emph{Commutative algebra: constructive methods. Finite projective
  modules}.
\newblock Algebra and applications, 20. Springer, Dordrecht, 2015.
\newblock \href{https://arxiv.org/abs/1605.04832}{\nolinkurl{arXiv:1605.04832}}.
\newblock Translated from the French (Calvage \& Mounet, Paris, 2011, revised
  and extended by the authors) by Tania K.~Roblot.

\bibitem[Lombardi and Quitté(2021)]{ACMC}
Henri Lombardi and Claude Quitté.
\newblock \emph{{Algèbre commutative. Méthodes constructives. Mo\-dules
  projectifs de type fini. Cours et exercices.}}
\newblock Paris: Calvage \& Mounet, 2021.
\newblock Second revised and extended edition of the book published in 2011.

\bibitem[Lorenzen(1951)]{Lor1951}
Paul Lorenzen.
\newblock Algebraische und logistische {U}ntersuchungen \"uber freie
  {V}erb\"ande.
\newblock \emph{J. Symb. Logic}, 16:\penalty0 81--106, 1951.
\newblock \url{http://www.jstor.org/stable/2266681}.
\newblock Translation by Stefan Neuwirth: \emph{Algebraic and logistic
  investigations on free lattices}, \href{https://arxiv.org/abs/1710.08138}{\nolinkurl{arXiv:1710.08138}}.

\bibitem[Mines et~al.(1988)Mines, Richman, and Ruitenburg]{MRR}
Ray Mines, Fred Richman, and Wim Ruitenburg.
\newblock \emph{A course in constructive algebra}.
\newblock Universitext. Springer-Verlag, New York, 1988.
\newblock \doi{10.1007/978-1-4419-8640-5}.

\bibitem[Myhill(1972)]{myhill72}
John Myhill.
\newblock Review of \citealt{Bi67}.
\newblock \emph{J. Symb. Logic}, 37\penalty0 (4):\penalty0 744--747, 1972.
\newblock \url{http://www.jstor.org/stable/2272421}.

\bibitem[Stolzenberg(1970)]{stolzenberg70}
Gabriel Stolzenberg.
\newblock Review of \citealt{Bi67}.
\newblock \emph{Bull. Amer. Math. Soc.}, 76:\penalty0 301--323, 1970.
\newblock \doi{10.1090/S0002-9904-1970-12455-7}.

\bibitem[Stone(1937)]{Sto37}
M.~H. Stone.
\newblock Topological representations of distributive lattices and {B}rouwerian
  logics.
\newblock \emph{{\v C}asopis P{\v e}st. Mat. Fys.}, 67\penalty0 (1):\penalty0
  1--25, 1937.
\newblock \doi{10.21136/CPMF.1938.124080}.

\bibitem[Yengui(2015)]{Yen2015}
Ihsen Yengui.
\newblock \emph{Constructive commutative algebra: projective modules over
  polynomial rings and dynamical Gr{\"o}bner bases}.
\newblock Lecture Notes in Mathematics, 2138. Springer, Cham, 2015.
\newblock \doi{10.1007/978-3-319-19494-3}.

\end{thebibliography}

\begin{thebibliography}{34}
\expandafter\ifx\csname natexlab\endcsname\relax\def\natexlab#1{#1}\fi
\expandafter\ifx\csname fonteauteurs\endcsname\relax
\def\fonteauteurs{\scshape}\fi
\expandafter\ifx\csname url\endcsname\relax
  \def\url#1{{\tt #1}}%
    \message{You should include the url package}\fi

\bibitem[{Bezem} et {Coquand}(2005)]{fBC2005}
Marc \bgroup\fonteauteurs\bgroup {Bezem}\egroup\egroup{} et Thierry
  \bgroup\fonteauteurs\bgroup {Coquand}\egroup\egroup{} :
\newblock {Automating coherent logic.}
\newblock \emph{In} {\em {Logic for programming, artificial intelligence, and
  reasoning. 12th international conference, LPAR 2005, Montego Bay, Jamaica,
  December 2--6, 2005. Proceedings}}, pages 246--260. Berlin: Springer, 2005.

\bibitem[Bishop(1967)]{fBi67}
Errett \bgroup\fonteauteurs\bgroup Bishop\egroup\egroup{} :
\newblock {\em Foundations of constructive analysis}.
\newblock McGraw-Hill, New York, 1967.
\newblock URL \url{http://hlombardi.free.fr/Bishop.djvu}.

\bibitem[Bishop et Bridges(1985)]{fBB85}
Errett \bgroup\fonteauteurs\bgroup Bishop\egroup\egroup{} et Douglas
  \bgroup\fonteauteurs\bgroup Bridges\egroup\egroup{} :
\newblock {\em Constructive analysis}.
\newblock Grundlehren der mathematischen Wissenschaften, 279. Springer-Verlag,
  Berlin, 1985.

\bibitem[Bridges et Richman(1987)]{fBR1987}
Douglas \bgroup\fonteauteurs\bgroup Bridges\egroup\egroup{} et Fred
  \bgroup\fonteauteurs\bgroup Richman\egroup\egroup{} :
\newblock {\em Varieties of constructive mathematics}.
\newblock London Mathematical Society Lecture Note Series, 97. Cambridge
  University Press, Cambridge, 1987.

\bibitem[{Cahen}(1990)]{fCah90}
Paul-Jean \bgroup\fonteauteurs\bgroup {Cahen}\egroup\egroup{} :
\newblock {Construction B, I, D et anneaux localement ou r\'esiduellement de
  Jaffard. (B, I, D construction and locally or residually Jaffard rings).}
\newblock {\em {Arch. Math.}}, 54\penalty0 (2)\string:\penalty500\relax
  125--141, 1990.

\bibitem[Cederquist et Coquand(2000)]{fCC00}
Jan \bgroup\fonteauteurs\bgroup Cederquist\egroup\egroup{} et Thierry
  \bgroup\fonteauteurs\bgroup Coquand\egroup\egroup{} :
\newblock Entailment relations and distributive lattices.
\newblock \emph{In} {\em Logic {C}olloquium '98 ({P}rague)}, volume~13 de {\em
  Lect. Notes Log.}, pages 127--139. Assoc. Symbol. Logic, Urbana, IL, 2000.

\bibitem[{Coquand}(2005)]{fCoq2005}
Thierry \bgroup\fonteauteurs\bgroup {Coquand}\egroup\egroup{} :
\newblock {A completeness proof for geometrical logic.}
\newblock \emph{In} {\em {Logic, methodology and philosophy of science.
  Proceedings of the 12th international congress, Oviedo, Spain, August 2003}},
  pages 79--89. London: King's College Publications, 2005.

\bibitem[Coquand(2009)]{fCoq2009}
Thierry \bgroup\fonteauteurs\bgroup Coquand\egroup\egroup{} :
\newblock Space of valuations.
\newblock {\em Ann. Pure Appl. Logic}, 157\penalty0
  (2-3)\string:\penalty500\relax 97--109, 2009.

\bibitem[Coquand et Lombardi(2003)]{fCL2003}
Thierry \bgroup\fonteauteurs\bgroup Coquand\egroup\egroup{} et Henri
  \bgroup\fonteauteurs\bgroup Lombardi\egroup\egroup{} :
\newblock Hidden constructions in abstract algebra: {K}rull dimension of
  distributive lattices and commutative rings.
\newblock \emph{In} {\em Commutative ring theory and applications ({F}ez,
  2001)}, volume 231 de {\em Lecture Notes in Pure and Appl. Math.}, pages
  477--499. Dekker, New York, 2003.
\newblock URL \url{http://arxiv.org/abs/1712.04725}.

\bibitem[Coquand et Lombardi(2006)]{fCL05}
Thierry \bgroup\fonteauteurs\bgroup Coquand\egroup\egroup{} et Henri
  \bgroup\fonteauteurs\bgroup Lombardi\egroup\egroup{} :
\newblock A logical approach to abstract algebra.
\newblock {\em Math. Struct. Comput. Sci.}, 16\penalty0
  (5)\string:\penalty500\relax 885--900, 2006.
\newblock URL \url{http://hlombardi.free.fr/publis/AlgebraLogicCoqLom.pdf}.

\bibitem[Coquand et Lombardi(2018)]{fCL2001-2018}
Thierry \bgroup\fonteauteurs\bgroup Coquand\egroup\egroup{} et Henri
  \bgroup\fonteauteurs\bgroup Lombardi\egroup\egroup{} :
\newblock Constructions cachées en algèbre abstraite. {D}imension de {K}rull,
  {G}oing up, {G}oing down.
\newblock Rapport technique, D{\'e}partement de Math{\'e}matiques de
  l'Universit{\'e} de Franche-Comt{\'e}, 2018.
\newblock URL \url{http://arxiv.org/abs/1712.04728}.
\newblock Mise à jour en 2018 d'un preprint de 2001.

\bibitem[Coquand et Persson(2001)]{fCP2001}
Thierry \bgroup\fonteauteurs\bgroup Coquand\egroup\egroup{} et Henrik
  \bgroup\fonteauteurs\bgroup Persson\egroup\egroup{} :
\newblock Valuations and {D}edekind's {P}rague theorem.
\newblock {\em J. Pure Appl. Algebra}, 155\penalty0
  (2-3)\string:\penalty500\relax 121--129, 2001.
\newblock URL \url{http://dx.doi.org/10.1016/S0022-4049(99)00095-X}.

\bibitem[Coste \emph{et~al.}(2001)Coste, Lombardi, et Roy]{fCLR01}
Michel \bgroup\fonteauteurs\bgroup Coste\egroup\egroup{}, Henri
  \bgroup\fonteauteurs\bgroup Lombardi\egroup\egroup{} et Marie-Fran{\c{c}}oise
  \bgroup\fonteauteurs\bgroup Roy\egroup\egroup{} :
\newblock Dynamical method in algebra: effective {N}ullstellens\"atze.
\newblock {\em Ann. Pure Appl. Logic}, 111\penalty0
  (3)\string:\penalty500\relax 203--256, 2001.

\bibitem[Della~Dora \emph{et~al.}(1985)Della~Dora, Dicrescenzo, et Duval]{fD5}
Jean \bgroup\fonteauteurs\bgroup Della~Dora\egroup\egroup{}, Claire
  \bgroup\fonteauteurs\bgroup Dicrescenzo\egroup\egroup{} et Dominique
  \bgroup\fonteauteurs\bgroup Duval\egroup\egroup{} :
\newblock About a new method for computing in algebraic number fields.
\newblock \emph{In} Bob~F. \bgroup\fonteauteurs\bgroup
  Caviness\egroup\egroup{}, {\'e}diteur :  {\em {EUROCAL} '85. {European}
  {Conference} on {Computer} {Algebra}, {Linz}, {Austria}, {April} 1-3, 1985.
  {Proceedings}. {Vol}. 2: {Research} contributions}, Lect. Notes Comput. Sci.,
  204, pages 289--290. Springer, Berlin, 1985.

\bibitem[{Hochster}(1969)]{fHoc1969}
M.~\bgroup\fonteauteurs\bgroup {Hochster}\egroup\egroup{} :
\newblock {Prime ideal structure in commutative rings.}
\newblock {\em {Trans. Am. Math. Soc.}}, 142\string:\penalty500\relax 43--60,
  1969.

\bibitem[Joyal(1971)]{fJoy71}
Andr{\'e} \bgroup\fonteauteurs\bgroup Joyal\egroup\egroup{} :
\newblock Spectral spaces and distributive lattices.
\newblock {\em Notices Amer. Math. Soc.}, 18\penalty0
  (2)\string:\penalty500\relax 393--394, 1971.
\newblock URL
  \url{https://www.ams.org/journals/notices/197102/197102FullIssue.pdf}.

\bibitem[{Joyal}(1976)]{fJoy76}
Andre \bgroup\fonteauteurs\bgroup {Joyal}\egroup\egroup{} :
\newblock {Les th\'eorèmes de Chevalley-Tarski et remarques sur l'alg\`ebre
  constructive.}
\newblock {\em {Cah. Topologie G\'eom. Diff\'er. Cat\'egoriques}},
  16\string:\penalty500\relax 256--258, 1976.

\bibitem[Kemper et Viet~Trung(2014)]{fKV2014}
Gregor \bgroup\fonteauteurs\bgroup Kemper\egroup\egroup{} et Ngo
  \bgroup\fonteauteurs\bgroup Viet~Trung\egroup\egroup{} :
\newblock Krull dimension and monomial orders.
\newblock {\em J. Algebra}, 399\string:\penalty500\relax 782--800, 2014.
\newblock URL \url{https://arxiv.org/pdf/1303.3937.pdf}.

\bibitem[{Kemper} et {Yengui}(2020)]{fKY2020}
Gregor \bgroup\fonteauteurs\bgroup {Kemper}\egroup\egroup{} et Ihsen
  \bgroup\fonteauteurs\bgroup {Yengui}\egroup\egroup{} :
\newblock {Valuative dimension and monomial orders.}
\newblock {\em {J. Algebra}}, 557\string:\penalty500\relax 278--288, 2020.
\newblock URL \url{https://arxiv.org/abs/1906.12067}.

\bibitem[Lombardi(1998)]{fLom98}
Henri \bgroup\fonteauteurs\bgroup Lombardi\egroup\egroup{} :
\newblock Relecture constructive de la th\'eorie d'{A}rtin-{S}chreier.
\newblock {\em Ann. Pure Appl. Logic}, 91\penalty0 (1)\string:\penalty500\relax
  59--92, 1998.

\bibitem[Lombardi(2000)]{fLom2000}
Henri \bgroup\fonteauteurs\bgroup Lombardi\egroup\egroup{} :
\newblock Une g\'en\'eralisation du {P}ositivstellensatz pour les corps
  valu\'es alg\'ebriquement clos.
\newblock {\em C. R. Acad. Sci. Paris S\'er. I Math.}, 331\penalty0
  (5)\string:\penalty500\relax 345--348, 2000.

\bibitem[Lombardi(2002)]{fLom02}
Henri \bgroup\fonteauteurs\bgroup Lombardi\egroup\egroup{} :
\newblock Dimension de {K}rull, {N}ullstellens\"atze et \'evaluation dynamique.
\newblock {\em Math. Z.}, 242\penalty0 (1)\string:\penalty500\relax 23--46,
  2002.
\newblock URL \url{http://arxiv.org/abs/2308.10296}.

\bibitem[Lombardi(2006)]{fLom06}
Henri \bgroup\fonteauteurs\bgroup Lombardi\egroup\egroup{} :
\newblock Structures alg\'ebriques dynamiques, espaces topologiques sans points
  et programme de {H}ilbert.
\newblock {\em Ann. Pure Appl. Logic}, 137\penalty0
  (1-3)\string:\penalty500\relax 256--290, 2006.

\bibitem[Lombardi(2020)]{fLom2020}
Henri \bgroup\fonteauteurs\bgroup Lombardi\egroup\egroup{} :
\newblock Spectral spaces versus distributive lattices: a dictionary.
\newblock \emph{In} {\em {Advances in rings, modules and factorizations.
  Selected papers based on the presentations at the international conference on
  rings and factorizations, Graz, Austria, February 19--23, 2018}}, pages
  223--245. Cham: Springer, 2020.
\newblock URL \url{https://arxiv .org/abs/1812.06277}.

\bibitem[Lombardi(2024)]{fLom-tgac}
Henri \bgroup\fonteauteurs\bgroup Lombardi\egroup\egroup{} :
\newblock Théories géométriques pour l'algèbre constructive.
\newblock \url{http://hlombardi.free.fr/TGM.pdf}, 2024.

\bibitem[Lombardi et Mahboubi(2023)]{fLM2022}
Henri \bgroup\fonteauteurs\bgroup Lombardi\egroup\egroup{} et Assia
  \bgroup\fonteauteurs\bgroup Mahboubi\egroup\egroup{} :
\newblock Valuative lattices and spectra.
\newblock \emph{In} Jean-Luc \bgroup\fonteauteurs\bgroup
  Chabert\egroup\egroup{}, Marco \bgroup\fonteauteurs\bgroup
  Fontana\egroup\egroup{}, Sophie \bgroup\fonteauteurs\bgroup
  Frisch\egroup\egroup{}, Sarah \bgroup\fonteauteurs\bgroup
  Glaz\egroup\egroup{} et Keith \bgroup\fonteauteurs\bgroup
  Johnson\egroup\egroup{}, {\'e}diteurs :  {\em Algebraic, number theoretic,
  and topological aspects of ring theory}, pages 275--341. Springer, Cham,
  2023.

\bibitem[Lombardi et Quitté(2015)]{fCACM}
Henri \bgroup\fonteauteurs\bgroup Lombardi\egroup\egroup{} et Claude
  \bgroup\fonteauteurs\bgroup Quitté\egroup\egroup{} :
\newblock {\em Commutative algebra: constructive methods. Finite projective
  modules}.
\newblock Algebra and applications, 20. Springer, Dordrecht, 2015.
\newblock URL \url{https://arxiv.org/abs/1605.04832}.
\newblock Traduit du français (Calvage \& Mounet, Paris, 2011, revu et étendu
  par les auteurs) par Tania K.~Roblot.

\bibitem[Lombardi et Quitté(2021)]{fACMC}
Henri \bgroup\fonteauteurs\bgroup Lombardi\egroup\egroup{} et Claude
  \bgroup\fonteauteurs\bgroup Quitté\egroup\egroup{} :
\newblock {\em {Algèbre commutative. Méthodes constructives. Mo\-dules
  projectifs de type fini. Cours et exercices.}}
\newblock Paris: Calvage \& Mounet, 2021.
\newblock Second revised and extended edition of the book published in 2011.

\bibitem[Lorenzen(1951)]{fLor1951}
Paul \bgroup\fonteauteurs\bgroup Lorenzen\egroup\egroup{} :
\newblock Algebraische und logistische {U}ntersuchungen \"uber freie
  {V}erb\"ande.
\newblock {\em J. Symbolic Logic}, 16\string:\penalty500\relax 81--106, 1951.
\newblock URL \url{http://www.jstor.org/stable/2266681}.
\newblock Translation by Stefan Neuwirth: \emph{Algebraic and logistic
  investigations on free lattices}, \url{http://arxiv.org/abs/1710.08138}.

\bibitem[Mines \emph{et~al.}(1988)Mines, Richman, et Ruitenburg]{fMRR}
Ray \bgroup\fonteauteurs\bgroup Mines\egroup\egroup{}, Fred
  \bgroup\fonteauteurs\bgroup Richman\egroup\egroup{} et Wim
  \bgroup\fonteauteurs\bgroup Ruitenburg\egroup\egroup{} :
\newblock {\em A course in constructive algebra}.
\newblock Universitext. Springer-Verlag, New York, 1988.
\newblock URL
  \url{https://pufc.univ-fcomte.fr/media/catalog/product/m/r/mrr-francais-avec-couverture_1.pdf}.
\newblock Traduction française par Henri Lombardi, révisée par Stefan
  Neuwirth. {\em Un cours d'algèbre constructive}. Presses Universitaires de
  Franche-Comté. 2020.

\bibitem[Myhill(1972)]{fmyhill72}
John \bgroup\fonteauteurs\bgroup Myhill\egroup\egroup{} :
\newblock Review of {E}rrett {B}ishop, \emph{{F}oundations of constructive
  analysis} and \emph{Mathematics as a numerical language}.
\newblock {\em J. Symb. Logic}, 37\penalty0 (4)\string:\penalty500\relax
  744--747, 1972.
\newblock URL \url{http://www.jstor.org/stable/2272421}.

\bibitem[Stolzenberg(1970)]{fstolzenberg70}
Gabriel \bgroup\fonteauteurs\bgroup Stolzenberg\egroup\egroup{} :
\newblock Review: \emph{{F}oundations of constructive analysis} by {E}rrett
  {B}ishop.
\newblock {\em Bull. Amer. Math. Soc.}, 76\string:\penalty500\relax 301--323,
  1970.

\bibitem[Stone(1937)]{fSto37}
M.~H. \bgroup\fonteauteurs\bgroup Stone\egroup\egroup{} :
\newblock Topological representations of distributive lattices and {B}rouwerian
  logics.
\newblock {\em {\v C}asopis P{\v e}st. Mat. Fys.}, 67\penalty0
  (1)\string:\penalty500\relax 1--25, 1937.

\bibitem[Yengui(2015)]{fYen2015}
Ihsen \bgroup\fonteauteurs\bgroup Yengui\egroup\egroup{} :
\newblock {\em Constructive commutative algebra: projective modules over
  polynomial rings and dynamical Gr{\"o}bner bases}.
\newblock Lecture Notes in Mathematics, 2138. Springer, Cham, 2015.

\end{thebibliography}

\endgroup
\end{document}